\newcommand{\Z}{\mathbb{Z}}
\newcommand{\out}{\mathrm{out}}
\newtheorem{them}{Theorem}
\newtheorem{thm}{Theorem}[section]
\newtheorem{lem}[thm]{Lemma}
\newtheorem{prop}[thm]{Proposition}
\newtheorem{cor}[thm]{Corollary}
\newtheorem{remark}{Remark}
\newtheorem{conjecture}{Conjecture}
\newtheorem{question}{Question}
\numberwithin{equation}{section}
\begin{document}

\title{Bigeodesics in first-passage percolation}
\date{\today}
\author{Michael Damron\thanks{The research of M. D. is supported by NSF grant DMS-1419230 and an NSF CAREER grant.} \\ \small Georgia Tech and Indiana University \and Jack Hanson\thanks{The research of J. H. is supported by an AMS-Simons travel grant and NSF grant DMS-1612921.} \\ \small Georgia Tech and CUNY}
\maketitle

\begin{abstract}
In first-passage percolation, we place i.i.d. continuous weights at the edges of $\mathbb{Z}^2$ and consider the weighted graph metric. A distance-minimizing path between points $x$ and $y$ is called a geodesic, and a bigeodesic is a doubly-infinite path whose segments are geodesics. It is a famous conjecture that almost surely, there are no bigeodesics. In the `90s, Licea-Newman showed that, under a curvature assumption on the ``asymptotic shape,'' all infinite geodesics have an asymptotic direction, and there is a full measure set $D \subset [0,2\pi)$ such that for any $\theta \in D$, there are no bigeodesics with one end directed in direction $\theta$. In this paper, we show that there are no bigeodesics with one end directed in any deterministic direction, assuming the shape boundary is differentiable. This rules out existence of ground state pairs for the related disordered ferromagnet whose interface has a deterministic direction. Furthermore, it resolves the Benjamini-Kalai-Schramm ``midpoint problem'' \cite[p. 1976]{BKS} under the extra assumption that the limit shape boundary is differentiable.
\end{abstract}

\section{Introduction}
Consider first-passage percolation (FPP) on $\mathbb{Z}^2$. The model is defined as follows. Let $(t_e)$ be a collection of non-negative random variables, one assigned to each edge in $\mathcal{E}^2$, the nearest-neighbor edges of $\mathbb{Z}^2$. The weighted graph (pseudo-)metric is defined as 
\[
T(x,y) = \inf_{\gamma : x \to y} T(\gamma),
\]
where $\gamma$ is any lattice path from $x$ to $y$; that is, a sequence $(x=x_0,e_0,x_1, e_1, \ldots, e_{n-1},x_n=y)$ of vertices and edges such that for $k=0, \ldots, n-1$, the edge $e_k \in \mathcal{E}^2$ equals $\{x_k, x_{k+1}\}$, and $T(\gamma)$ is the passage time of $\gamma$:
\[
T(\gamma) = \sum_{e \in \gamma} t_e.
\]

A minimizing path from $x$ to $y$ in the above definition of $T$ is called a geodesic. Infinite paths all of whose finite segments are geodesics are called infinite geodesics. A bigeodesic is an infinite geodesic which is doubly infinite; that is, its vertices are indexed by $\mathbb{Z}$. The following is a well-known conjecture, attributed by Kesten as a question of Furstenberg \cite[Remark~9.22]{aspects}.
\begin{conjecture}\label{conj: bigeodesics}
Suppose $(t_e)$ is i.i.d. with continuous distribution. Almost surely there are no bigeodesics in dimension two.
\end{conjecture}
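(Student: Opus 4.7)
My plan is to combine the paper's main theorem (no bigeodesics with one end directed in any deterministic direction, under differentiability of the limit-shape boundary) with an averaging argument, and then to confront the residual difficulty head on.

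First I would use the differentiability/curvature hypothesis on the asymptotic shape to invoke Newman's direction theorem and assign every infinite geodesic a deterministic asymptotic direction. In particular, any putative bigeodesic $\gamma$ would carry two asymptotic directions $\theta^-(\gamma),\theta^+(\gamma) \in [0,2\pi)$, each measurable in the edge weights. Let $\Theta$ denote the (random) set of $\theta$ such that some bigeodesic has one end in direction $\theta$; the paper's main theorem gives $\prob(\theta \in \Theta) = 0$ for every deterministic $\theta$. A Fubini computation then yields $\pe\,\mathrm{Leb}(\Theta) = 0$, so $\Theta$ has Lebesgue measure zero almost surely. This already rules out bigeodesics with directions in any measurable set of directions of positive Lebesgue measure.

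It remains to upgrade ``$\mathrm{Leb}(\Theta)=0$ a.s.'' to ``$\Theta=\emptyset$ a.s.''; equivalently, to exclude the possibility that $\Theta$ is nonempty but of Lebesgue measure zero (for example, a random finite, countable, or Cantor-type collection of exceptional directions). Here I would attempt a Burton--Keane style counting argument: if $\Theta \neq \emptyset$ with positive probability, then by translation invariance and ergodicity there should be infinitely many disjoint bigeodesics, and one tries to bound the number of disjoint bigeodesic ends crossing the boundary of a box of side $n$ by $o(n)$ by using quantitative coalescence of semi-infinite geodesics. Combined with a lower bound on the density of such ends coming from ergodicity, this would give the desired contradiction.

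The main obstacle is precisely this last step. Translating the environment shifts bigeodesics but preserves their asymptotic directions, so the set $\Theta$ does not transform under $\Z^2$-shifts in a form that feeds directly into standard ergodic counting; in effect one must exclude bigeodesics whose directions ``hide'' in a measure-zero random subset of $[0,2\pi)$, and no current FPP tool is known to do this. This is the reason Conjecture~\ref{conj: bigeodesics} remains open even granted the paper's deterministic-direction theorem. A complete proof would seem to require either (i) a structural rigidity result forcing the set of bigeodesic directions to have positive Lebesgue measure whenever it is nonempty, or (ii) a quantitative coalescence theorem for semi-infinite geodesics strong enough to control the number of disjoint bigeodesic ends crossing a large box uniformly in their asymptotic directions.
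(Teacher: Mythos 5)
The statement in question is Conjecture~\ref{conj: bigeodesics}; the paper does not prove it, and your proposal correctly recognizes that it remains open. Your assessment of the obstruction agrees with the authors' own discussion: Theorem~\ref{thm: main_fake_thm} rules out bigeodesics with an end in a fixed deterministic direction under a differentiability hypothesis on $\partial\mathcal{B}$, and the end of Section~3.1 explains that handling all directions at once requires further ideas, because a random (conjecturally countable) set of exceptional directions in which geodesic coalescence fails is expected to exist. Excluding bigeodesics whose directions hide in this random Lebesgue-null set is exactly the gap you identify, and it is genuine.

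Two technical remarks on the partial steps you sketch. First, assigning every infinite geodesic a single asymptotic direction invokes Newman's direction theorem (item 2 of Theorem~\ref{thm: LN}), which needs a uniform curvature hypothesis strictly stronger than differentiability; under differentiability alone one only obtains directedness in sectors $S_\theta$ (Remark~\ref{rem: directions}), so your measurable map $\gamma \mapsto (\theta^-(\gamma),\theta^+(\gamma))$ need not exist. Second, the Fubini computation giving $\mathrm{Leb}(\Theta)=0$ almost surely is essentially what Licea--Newman already established via the full-measure deterministic set $\mathcal{D}$; the paper's improvement is to enlarge $\mathcal{D}$ to all of $[0,2\pi)$, which sharpens the deterministic side but leaves the random exceptional set untouched, so the Fubini step is not where the difficulty lies. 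Finally, the conjecture as stated assumes only i.i.d.\ continuous weights, so both the differentiability and curvature hypotheses your plan relies on are themselves unverified under the hypotheses of the statement; any argument that starts from them proves at best a conditional result, as the paper itself does.
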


In this article, we provide new progress toward this conjecture. Below is special case of Theorem~\ref{thm: limits}, which considers more general translation invariant distributions and weaker limit shape assumptions. In the statement, a path with vertices $x_1, x_2, \ldots$ has direction $\theta$ if $\|x_n\|_1 \to \infty$ and $\mathrm{arg}~x_n \to \theta$. The ``limiting shape'' is defined below in Section~\ref{sec: assumptions}.
\begin{thm}\label{thm: main_fake_thm}
Suppose the collection $(t_e)$ is i.i.d. with continuous distribution and $\mathbb{E}t_e < \infty$. Assume that the model's limit shape $\mathcal{B}$ has a differentiable boundary. Given $\theta \in [0,2\pi)$, almost surely there are no bigeodesics with one end having direction $\theta$.
\end{thm}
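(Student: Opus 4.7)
The plan is to proceed by contradiction, combining a Busemann-function construction with a Licea-Newman type coalescence argument, specialized to the single direction $\theta$. Suppose that with positive probability there is a bigeodesic with one end having direction $\theta$. The first move is a standard mass-transport / Palm-measure construction: if such bigeodesics exist with positive probability, one obtains a translation-invariant probability measure $\mu$ on pairs (edge-weight configuration, bigeodesic through the origin) such that under $\mu$ the forward end of the marked bigeodesic is directed at $\theta$. Ergodicity of the i.i.d.\ environment under translations is used to make this extraction clean, and tightness of the path marginal follows from standard passage-time estimates that rule out very long excursions.

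Next, I would construct a Busemann-type function $B_\theta$ associated to direction $\theta$ as a subsequential limit of $T(\cdot, z_n) - T(\cdot, z_n')$ for a deterministic sequence $z_n$ going to infinity in direction $\theta$. Differentiability of $\partial \mathcal{B}$ at the boundary point in direction $\theta$ guarantees that the supporting line to $\mathcal{B}$ there is unique, which translates into a sharp asymptotic statement for the level sets of $B_\theta$; in particular, every infinite geodesic in the associated ``$B_\theta$-forest'' (in which each vertex $x$ is joined to the neighbor $y$ minimizing $B_\theta(y,\cdot) + t_{\{x,y\}}$) that has an asymptotic direction must lie in the closed interval of directions sharing that supporting line. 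With $B_\theta$ in hand, a shield-and-modification argument in the spirit of Licea-Newman (using continuity of the edge-weight distribution to wiggle a finite block of edges) shows that two $B_\theta$-geodesic rays both directed precisely at $\theta$ must coalesce almost surely. Under $\mu$, the forward end of the marked bigeodesic is such a ray, and combining coalescence with translation invariance of $\mu$ and a positive-density counting argument (comparing the density of vertices on the marked bigeodesic with the density of coalescence points) yields the desired contradiction.

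The central obstacle is the construction and coalescence analysis for the \emph{single} deterministic direction $\theta$: the classical Licea-Newman argument discards a measure-zero exceptional set of directions, which is precisely what must be avoided here. The strategy is to replace the averaging over a range of directions by a direct argument exploiting uniqueness of the supporting line at the direction-$\theta$ boundary point, and to control the asymptotic behavior of $B_\theta$-rays carefully enough under the weaker differentiability hypothesis (which still permits flat pieces of $\partial \mathcal{B}$) to conclude that coalescence holds for this fixed $\theta$ rather than only for almost every direction. This refinement is also what upgrades the Benjamini-Kalai-Schramm midpoint problem conclusion from the Licea-Newman ``full-measure'' version to a statement valid at every deterministic direction.
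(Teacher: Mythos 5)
Your outline gestures toward the right ingredients (Busemann functions, coalescence via a modification argument, a density contradiction) but it elides the central technical obstruction that the paper spends most of its effort resolving, and as a result the key coalescence step and the final identification step do not actually go through as stated.

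The paper's real difficulty is that under the hypotheses (even full differentiability of $\partial\mathcal{B}$), geodesics need not have a single asymptotic direction $\theta$: the tangent line at $v_\theta$ may touch $\mathcal{B}$ in a nondegenerate arc $S$, and the relevant geodesics are only known to be directed \emph{in the sector} $S$. The Licea-Newman shield-and-modification machinery you invoke genuinely requires geodesics directed in a fixed direction (or very small sector) to run the Burton-Keane count, and the whole point of their exceptional set $\mathcal{D}$ is that one only controls this for almost every direction. You say you will ``replace the averaging over a range of directions by a direct argument exploiting uniqueness of the supporting line,'' but you never specify what this argument is; as written, this is precisely where the proof is missing. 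The paper's substitute is a genuinely different device: it defines a total order $\prec$ on $S$-directed geodesics, proves existence and consistency of left-most/right-most extremal geodesics $\Gamma_x^L, \Gamma_x^R$ from every $x$, passes to half-plane versions, and then gets the key inequality $\Delta_H(0,e_1)=B_H^L(0,e_1)-B_H^R(0,e_1)\le 0$ from a ``paths crossing'' monotonicity argument. Combined with a strict-inequality argument (Proposition~\ref{prop: bigger}) and the differentiability hypothesis, which forces the linear asymptotics of $B_H^L$ and $B_H^R$ to agree (Proposition~\ref{prop: delta_0}), one concludes $\Gamma_{0,H}^L=\Gamma_{0,H}^R$ and hence uniqueness/coalescence of all $S$-directed geodesics. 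None of this machinery appears in your outline.

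There is a second gap at the end of your argument: you assume that under $\mu$ the forward end of the marked bigeodesic is a $B_\theta$-geodesic ray (an element of the Busemann forest). This is not automatic; a priori there can be geodesic rays directed at $\theta$ that are not subsequential limits of $\Gamma(x,z_n)$. The paper closes this by first proving uniqueness (all $S$-directed geodesics from a point coalesce), after which any forward end of a bigeodesic directed in $S$ must coincide with $\Gamma_x$, and only then appeals to a mass-transport / no-backward-paths result from Damron-Hanson to conclude. In your proposal the order of quantifiers is inverted: you introduce the translation-invariant measure on bigeodesics at the start and try to prove coalescence afterward, but without the uniqueness result there is nothing tying the marked bigeodesic's forward end to the $B_\theta$-forest. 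Finally, your ``wiggle a finite block of edges'' step silently requires that the extremal (or Busemann) geodesics be unchanged under the modification; the paper has to prove a lemma to this effect, and this too is nontrivial precisely because of the undirectedness issue.
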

Note that there are two main differences between this result and the above conjecture. First, we assume that $\partial \mathcal{B}$ is differentiable. This is expected to be true for all dimensions and all continuous distributions but, nonetheless, is another well-known conjecture. In Section~\ref{sec: background}, we explain how in our general setting, differentiability may be necessary for many of our results. Second, our statement is about a deterministic direction, whereas the conjecture is about all directions simultaneously. Regardless, the most recent progress on this conjecture was due to Wehr-Woo \cite{WW98} in '98 and Licea-Newman \cite{LN96} in '96. In Section~\ref{sec: background}, we will present these results and discuss the degree to which ours improve on them. In contrast to Licea-Newman's methods which use curvature and concentration, our analysis is based on Busemann functions, which are tools introduced from metric space geometry to FPP by Hoffman in two influential papers in '05 and '08 \cite{Hoffman05, Hoffman08}. (Busemann-type limits were also considered previously by Newman \cite{N95} and later by Pimentel \cite{P06}.)

\section{Main results}

\subsection{Assumptions}\label{sec: assumptions}
We will assume either {\bf A1'} or {\bf A2'} below. They are conditions from \cite{DH} on the probability space $(\Omega, \Sigma, \mathbb{P})$, where $\Omega = [0,\infty)^{\mathcal{E}^2}$ and $\Sigma$ is the product Borel sigma-algebra.
\begin{enumerate}
\item[{\bf A1'}] $\mathbb{P}$ is a product measure whose common distribution is continuous and satisfies 
\[
\mathbb{E}\left(\min_{1 \leq i \leq 4} t_{e_i}\right)^2 < \infty,
\] 
where $e_1, \ldots, e_4$ are the edges touching the origin.
\item[{\bf A2'}] $\mathbb{P}$ satisfies the conditions of Hoffman \cite{Hoffman08}:
\begin{enumerate}
\item $\mathbb{P}$ is ergodic with respect to translations of $\mathbb{Z}^2$,
\item $\mathbb{P}$ has all the symmetries of $\mathbb{Z}^2$,
\item $\mathbb{E}t_e^{2+\epsilon}<\infty$ for some $\epsilon>0$, and
\item the limit shape for $\mathbb{P}$ is bounded.
\end{enumerate}
Furthermore, we assume that $\mathbb{P}$ satisfies a version of unique passage times: 
\begin{enumerate}
\item[(e)] for edge sequences $e_1, \ldots, e_n$ and $f_1, \ldots, f_m$ such that at least one $e_i$ is not equal to any of the $f_i$'s, one has $\sum_i t_{e_i} \neq \sum_i t_{f_i}$ almost surely. 
\end{enumerate}
Last, we assume the upward finite-energy condition of \cite{DH}: writing $(t_e) = (t_e, \check t)$ as the edge-weight $t_e$ along with all other edge-weights $\check t = (t_f)_{f \neq e}$, one has
\begin{enumerate}
\item[(f)] $\mathbb{P}(t_e \geq \lambda \mid \check t) > 0$ almost surely
\end{enumerate}
whenever $\lambda > 0$ is such that $\mathbb{P}(t_e \geq \lambda)>0$.
\end{enumerate}
The last condition means that the supremum of the support of $t_e$ cannot decrease if we condition on the other weights. 

The limit shape referred to above is the set $\mathcal{B} \subset \mathbb{R}^2$ given by $\mathcal{B} = \{z : g(z) \leq 1\}$, and $g$ is the asymptotic norm for the FPP model (see \cite[Theorem~2.1]{survey}) given, for each $z \in \mathbb{R}^2$, by
\[
g(z) = \lim_n T(0,nz)/n \text{ almost surely and in }L^1,
\]
where we have extended the passage time function to all of $\mathbb{R}^2$, setting $T(z_1,z_2) = T(z_1',z_2')$ if $z_i'$ is the unique lattice point with $z_i \in z_i' + [0,1)^2$. $\mathcal{B}$ is also characterized by the ``shape theorem,'' (see \cite[Theorem~2.6]{survey} and \cite{boivin}) which says that given $\epsilon>0$, one has
\[
\mathbb{P}\left( (1-\epsilon)\mathcal{B} \subset B(t)/t \subset (1+\epsilon)\mathcal{B} \text{ for all large }t \right) = 1,
\]
where $B(t)/t$ is the set $\{z/t : z \in B(t)\}$. There is no simple condition known to guarantee that the limit shape under condition {\bf A2'} is bounded. {\bf A1'} and {\bf A2'} are sufficient to ensure that between each $x,y \in \mathbb{Z}^2$, there is a unique geodesic $\Gamma(x,y)$ (see \cite[Section~4.1]{survey}).

\subsection{Main FPP results}\label{sec: main_results}
We say that a path with vertices $x_1, x_2, \ldots$ is directed in a sector $S \subset [0,2\pi)$ if $\|x_n\|_1 \to \infty$ and all the limit points of $\{\mathrm{arg}~ x_n : n \geq 1\}$ lie in $S$. (Here, $0$ and $2\pi$ are identified.) Let $\mathcal{B}$ be the limit shape and define
\[
w_\theta = (\cos \theta,\sin \theta) \text{ and } v_\theta = w_\theta / g(w_\theta) \text{ for } \theta \in [0,2\pi).
\]
Next, for some $\theta \in [\pi/4,\pi/2]$,
\begin{equation}\label{eq: diff_assumption_1}
\text{assume }\partial \mathcal{B} \text{ is differentiable at } v_\theta.
\end{equation}
(By symmetry of $\mathcal{B}$, we may assume that $\theta$ is in this interval.) Let $P$ be the tangent line at $v_\theta$ and note that $P$ cannot be a vertical line. Define $S$ to be the sector of angles $\phi$ such that $v_\phi \in P$. If $\theta_1$ is the minimal angle in $S$ and $\theta_2$ is the maximal angle in $S$ (the endpoints of $S$), then we will furthermore
\begin{equation}\label{eq: diff_assumption_2}
\text{assume }\partial \mathcal{B} \text{ is differentiable at } v_{\theta_1} \text{ and } v_{\theta_2}.
\end{equation}
(Similar definitions are made for general $\theta \in [0,2\pi)$.) Note that by symmetry considerations and differentability,
\[
0<  \theta_1 \leq \theta_2 < \pi.
\]
It was pointed out to us by a referee that the arguments of this paper go through if \eqref{eq: diff_assumption_2} is replaced by the following: $v_{\theta_i}$ is a limit of extreme points of $\mathcal{B}$ for each $i$. 

\begin{them}\label{thm: limits}
Assume {\bf A1'} or {\bf A2'}. Further, for $\theta \in [0,2\pi)$, assume \eqref{eq: diff_assumption_1} and \eqref{eq: diff_assumption_2}. The following hold with probability one.
\begin{enumerate}
\item For each $x \in \mathbb{Z}^2$, there is an infinite geodesic $\Gamma_x$ that is directed in $S$ such that for any (possibly random) sequence $(x_n)$ directed in $S$,
\[
\Gamma_x = \lim_n \Gamma(x,x_n).
\]
\item For each $x,y \in \mathbb{Z}^2$, the geodesics $\Gamma_x$ and $\Gamma_y$ coalesce.
\item There are no bigeodesics with one end directed in $S$.
\end{enumerate}
\end{them}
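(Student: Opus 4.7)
The plan is to leverage the Busemann-function framework developed in \cite{DH} together with the differentiability assumptions \eqref{eq: diff_assumption_1}--\eqref{eq: diff_assumption_2} to pin down the asymptotic structure of infinite geodesics directed into $S$. The three parts of the theorem are addressed in order, with the Busemann construction providing the common technical backbone.

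\emph{Part (1).} I would first invoke the general Busemann-limit construction of \cite{DH} to obtain, on a full-measure event, a measurable limit Busemann function $B$ along subsequences of geodesics targeted in direction $\theta$, satisfying a cocycle identity on edges. Under \eqref{eq: diff_assumption_1}, the expectation of $B$ on nearest-neighbor edges must lie in the subdifferential of $g$ at $v_\theta$, which by differentiability is a single vector---the outward normal to the tangent line $P$. Hence the Busemann limit is uniquely determined and independent of the subsequence used. A standard limit-extraction argument as in \cite{Hoffman08} then produces, for each $x \in \mathbb{Z}^2$, an infinite geodesic $\Gamma_x$ which is the limit of $\Gamma(x,x_n)$ for every $(x_n)$ directed in $S$. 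The endpoint-differentiability \eqref{eq: diff_assumption_2} is used to show $\Gamma_x$ is itself directed in $S$: any subsequential angular limit of $\Gamma_x$ would have to be a $\phi$ with $v_\phi\in P$, i.e.\ in $S$.

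\emph{Part (2).} For coalescence I would follow a Licea--Newman-type strategy \cite{LN96}, recast in the Busemann setting as in \cite{Hoffman08, DH}. Suppose $\Gamma_x$ and $\Gamma_y$ fail to coalesce on a positive-probability event. Translation invariance then produces positive density of pairwise-disjoint infinite geodesics directed in $S$. A Burton--Keane-style counting argument, using planarity of $\mathbb{Z}^2$ together with the fact that $S$ is a bounded sector so that these geodesics are asymptotically confined to a cone, derives a contradiction from having too many disjoint infinite paths cross the boundary of a large box. The symmetry hypotheses (and ergodicity) under {\bf A1'} or {\bf A2'} are what make this density argument go through.

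\emph{Part (3).} The absence of bigeodesics is the main new contribution. Suppose $\gamma=(\ldots,x_{-1},x_0,x_1,\ldots)$ is a bigeodesic with $(x_n)_{n\ge 0}$ directed in $S$. By Part (1), the forward ray must coincide with $\Gamma_{x_0}$; by Part (2), the family $\{\Gamma_z : z \in \mathbb{Z}^2\}$ forms a single coalescing tree $\mathfrak{T}$ in direction $S$. The bigeodesic contributes to $\mathfrak{T}$ an infinite backward ray rooted at $x_0$, so translating along $\gamma$ and using ergodicity shows the density of vertices with an infinite backward path in $\mathfrak{T}$ is positive. I would then apply the upward finite-energy condition (f) (or its analogue under {\bf A1'}) together with a Zerner-type modification of the weight of a well-chosen edge along the backward ray to force that ray to fail the geodesic property, in direct contradiction with $\gamma$ being a bigeodesic.

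\emph{Main obstacle.} The most delicate step is simultaneously pinning down that the Busemann limit in direction $\theta$ is deterministic and corresponds to the tangent line $P$ (rather than some other supporting line of $\mathcal{B}$), and then coordinating this with the modification step in Part (3). Without \eqref{eq: diff_assumption_1}--\eqref{eq: diff_assumption_2}, the Busemann-derived geodesics could leak into sectors larger than $S$, and the backward-ray modification would have to contend with several candidate directions simultaneously---this is essentially why the argument succeeds only for a single deterministic direction and is not strong enough (yet) to resolve Conjecture~\ref{conj: bigeodesics} for all $\theta$ at once.
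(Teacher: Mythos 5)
Your Part (1) contains the central gap. You argue that differentiability forces the expectation $\mathbb{E}B(0,\cdot)$ to be the unique vector in the subdifferential of $g$ at $v_\theta$ (the normal to the tangent line $P$), and then conclude ``Hence the Busemann limit is uniquely determined and independent of the subsequence used.'' That inference does not follow. Knowing that the \emph{linear part} of every Busemann-type limit is the same vector $\rho$ does not imply that the Busemann functions themselves agree, nor that subsequential limits of finite geodesics $\Gamma(x,x_n)$ along different sequences $(x_n)$ directed in $S$ coincide. Two disjoint directed geodesic trees could both produce cocycles with the same mean slope but disagree pointwise, and the limits of $\Gamma(x,x_n)$ would then depend on which tree the $x_n$ track. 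The ``standard limit-extraction argument'' you invoke from \cite{Hoffman08} gives existence of \emph{some} directed geodesic, but not uniqueness of the limit over all sequences directed in $S$ --- that uniqueness is precisely the content of the theorem and the part that has resisted proof since Licea--Newman.

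What the paper actually does to close this gap is the bulk of Sections~4--5: it constructs a total ordering $\prec$ on infinite geodesics directed in $S$, defines left-most and right-most extremal geodesics $\Gamma_x^L,\Gamma_x^R$ as limits of finite geodesics to the extreme points of $V_x(n)$, proves they coalesce (with a modification lemma ensuring the Burton--Keane edge-raising leaves the extremal geodesics unchanged), and then passes to half-plane extremal geodesics. In the half-plane one gets a paths-crossing monotonicity $\Delta_H(0,e_1) = B_H^L(0,e_1) - B_H^R(0,e_1) \le 0$, and unique passage times upgrade $\Gamma_{0,H}^L \ne \Gamma_{0,H}^R$ (positive probability) to $\mathbb{E}\Delta_H(0,e_1) < 0$. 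Only \emph{then} does the differentiability hypothesis enter: it forces $\rho^L = \rho^R$, hence $\mathbb{E}\Delta_H(0,e_1) = 0$, a contradiction. Your Part (2) is in the right spirit but should be stated for the extremal geodesics; as written it presupposes a unique $\Gamma_x$ that has not been constructed. Your Part (3) proposes a Zerner-type weight modification along the backward ray; the paper instead invokes the mass-transport/no-backward-infinite-path result of \cite[Remark~6.10]{DH} applied to the coalescing geodesic tree. Your route might be made to work, but it is an additional argument to check, and in any case it rests on the unestablished uniqueness from Part (1).
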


In the first item above, we say that a sequence of paths $(\Gamma_n)$ converges to a path $\Gamma$ if for each $k$, the first $k$ steps of $\Gamma_n$ are eventually equal to the first $k$ steps of $\Gamma$. In the second item, ``$\Gamma_x$ and $\Gamma_y$ coalesce'' means that $\#(\Gamma_x \Delta \Gamma_y) < \infty$.

\begin{remark}
Apart from being in any fixed direction, the third item above is stronger than that of Licea-Newman, stated as the second half of part 1 in Theorem~\ref{thm: LN} below. Their theorem rules out bigeodesics with both ends in fixed directions (outside of an exceptional set), whereas ours rules out bigeodesics with an end in one fixed direction.
\end{remark}

\begin{remark}\label{rem: directions}
Since bigeodesics with fixed directions cannot exist, one should ask if infinite geodesics are even required to have directions. (This is why, although part 1 of Theorem~\ref{thm: LN} below does not require a curvature assumption, it is not useful without part 2, which requires a curvature assumption and asserts that geodesics actually have directions.) One can show using planarity and the results of \cite{DH} that if $\partial \mathcal{B}$ is differentiable and either {\bf A1'} or {\bf A2'} hold, then the following statements are true with probability one:
\begin{enumerate}
\item for all $\theta$, there is an infinite geodesic starting from 0 directed in $S_\theta$ and
\item every infinite geodesic is directed in $S_\theta$ for some $\theta$.
\end{enumerate}
\end{remark}

\begin{remark}
The above theorem implies that for such $\theta$, all infinite geodesics that are directed in $S$ coalesce. This in turn has consequences for competing growth models, and we mention one example here. Given initial sites $x,y \in \mathbb{Z}^2$, an infection at $x$ colonizes all sites $z$ with $T(x,z) < T(y,z)$ (and similarly for $y$). The sets of sites infected by $x$ or $y$ are each connected and have union equal to $\mathbb{Z}^2$, so they are separated by an interface in the dual lattice. One can show that if this interface is doubly infinite (that is, both sets are infinite) and one end is directed in $S$, then there are disjoint infinite geodesics started from $x$ and $y$ directed in $S$ and this has zero probability.
\end{remark}

\begin{them}\label{thm: busemann}
Assume {\bf A1'} or {\bf A2'}. Further, for $\theta \in [0,2\pi)$, assume \eqref{eq: diff_assumption_1} and \eqref{eq: diff_assumption_2}. With probability one, for each $x,y \in \mathbb{Z}^2$ and (possibly random) sequence $(x_n)$ directed in $S$, the limit
\[
B(x,y) = \lim_n \left[ T(x,x_n) - T(y,x_n) \right]
\]
exists. Furthermore, letting $\rho$ be the unique vector in $\mathbb{R}^2$ such that $\{r \in \mathbb{R}^2 : r \cdot \rho = 1\}$ is the tangent line to $\mathcal{B}$ in direction $\theta$, one has:
\begin{enumerate}
\item $\mathbb{E}B(0,x) = \rho \cdot x$ for $x \in \mathbb{Z}^2$.
\item For each $\epsilon>0$, the set of $x \in \mathbb{Z}^2$ such that $|B(0,x) - \rho \cdot x| > \epsilon \|x\|_1$ is almost surely finite.
\end{enumerate}
\end{them}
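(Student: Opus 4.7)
The plan is to extract existence of $B(x,y)$ from the geodesic coalescence already provided by Theorem~\ref{thm: limits}, then package $B$ as a stationary additive cocycle whose mean defines a vector $\rho_B$ satisfying $\pe B(0,v) = \rho_B \cdot v$, and finally identify $\rho_B = \rho$ using the differentiability of $\partial \cB$ at $v_\theta$. Item (2) will follow by upgrading the pointwise ergodic theorem for the centered cocycle $B(0,\cdot) - \rho \cdot (\cdot)$ to uniform sublinearity, in the spirit of standard shape-theorem arguments.

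\textbf{Existence.} Fix $x,y \in \Zt$ and any (possibly random) sequence $(x_n)$ directed in $S$. Theorem~\ref{thm: limits}(1) gives $\Gamma(x,x_n) \to \Gamma_x$ and $\Gamma(y,x_n) \to \Gamma_y$ almost surely, while part (2) says that $\Gamma_x$ and $\Gamma_y$ coalesce at some random common vertex $z$. Uniqueness of finite geodesics (built into \textbf{A1'}, and into (e) of \textbf{A2'}) then forces both $\Gamma(x,x_n)$ and $\Gamma(y,x_n)$ to pass through $z$ once $n$ is large enough, so that
\[
T(x, x_n) - T(y, x_n) = [T(x, z) + T(z, x_n)] - [T(y, z) + T(z, x_n)] = T(x, z) - T(y, z),
\]
which is finite and independent of $n$. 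Thus $B(x,y)$ exists, equals $T(x,z) - T(y,z)$, and does not depend on the defining sequence.

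\textbf{Cocycle structure and identification of $\rho_B$.} The identity $B(x,y) + B(y,w) = B(x,w)$ is immediate, and since $S$ is invariant under integer translations, $B$ is shift covariant: $B(x+u, y+u) = B(x,y) \circ \shift_u$ for $u \in \Zt$. The pathwise bound $|B(x,y)| \leq T(x,y)$ places $B(0,\unitv_1)$ and $B(0,\unitv_2)$ in $L^1$; setting $\rho_B = (\pe B(0,\unitv_1), \pe B(0,\unitv_2))$, additivity plus translation invariance of $\prob$ give $\pe B(0,v) = \rho_B \cdot v$ for every $v \in \Zt$. Taking expectations in $B(0,v) \leq T(0,v)$ and using the $L^1$ shape theorem $\pe T(0,nv)/n \to g(v)$ shows $\rho_B \cdot v \leq g(v)$ on $\Zt$, so the line $\{r : r \cdot \rho_B = 1\}$ supports $\cB$. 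The main obstacle is to obtain equality at $v_\theta$: for a deterministic lattice sequence $x_n$ with $x_n/n \to v_\theta$ and $Z_n$ the coalescence vertex of $\Gamma_0$ and $\Gamma_{x_n}$, the existence formula with the triangle inequality gives $B(0,x_n) \geq T(0,x_n) - 2 T(x_n, Z_n)$. After expectations and division by $n$ it remains to establish $\pe T(x_n, Z_n) = o(n)$, a quantitative ``near-coalescence'' estimate saying that $\Gamma_0$ and $\Gamma_{x_n}$ meet within sublinear distance of $x_n$; this is the step I expect to be hardest, and likely requires \eqref{eq: diff_assumption_2} in order to rule out geodesic fluctuations out of the sector $S$. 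Granting it, $\rho_B \cdot v_\theta \geq g(v_\theta) = 1$, which with the supporting bound yields $\rho_B \cdot v_\theta = 1$; differentiability of $\partial \cB$ at $v_\theta$ makes this supporting line unique, so $\rho_B = \rho$.

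\textbf{Shape theorem for $B$.} The cocycle $\widetilde B(x) := B(0,x) - \rho \cdot x$ is additive, shift covariant, has zero mean on each lattice direction by item (1), and satisfies $|\widetilde B(x) - \widetilde B(y)| \leq T(x,y) + |\rho \cdot (x-y)|$. Birkhoff's ergodic theorem along each $u \in \Zt$ gives $\widetilde B(nu)/n \to 0$ almost surely and in $L^1$. A finite covering of the unit $\ell^1$-sphere by rational directions, combined with the shape theorem for $T$ to bound angular oscillations between adjacent rays, upgrades these pointwise statements to $\max_{\|x\|_1 \leq n} |\widetilde B(x)|/n \to 0$ almost surely; this is equivalent to the asserted finiteness of $\{x : |B(0,x) - \rho \cdot x| > \epsilon \|x\|_1\}$ for each $\epsilon > 0$.
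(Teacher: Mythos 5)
Your existence argument and the derivation that $\mathbb{E}B(0,x) = \rho_B\cdot x$ for a vector $\rho_B$ giving a supporting line of $\mathcal{B}$ both match the paper. Your shape-theorem argument for the centered cocycle $B(0,\cdot)-\rho_B\cdot(\cdot)$ also follows the paper's route (which in turn cites the corresponding result of \cite{DH}). The genuine problem is the identification step, and you have correctly diagnosed it as the hard part of your plan --- but the paper's identification uses a much simpler device that you missed, and without it your proof has a real gap.

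You propose to show $\rho_B\cdot v_\theta = 1$ by taking $x_n/n\to v_\theta$, writing $B(0,x_n)\geq T(0,x_n)-2T(x_n,Z_n)$ with $Z_n$ the coalescence vertex of $\Gamma_0$ and $\Gamma_{x_n}$, and then invoking a quantitative estimate $\mathbb{E}T(x_n,Z_n)=o(n)$. That near-coalescence bound is not proved in the paper, is not a consequence of anything established in Sections 4--5, and would be a substantial new result (it would essentially say that coalescence happens on a sublinear scale, which nobody knows how to prove from qualitative coalescence alone). So as written this is a gap, not merely a hard step to be filled in.

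The paper avoids it entirely with the following observation: let $z_1,z_2,\ldots$ be the successive intersection points of the single infinite geodesic $\Gamma_0$ (directed in $S$) with the horizontal lines $L_1,L_2,\ldots$. Taking the defining sequence for $B$ to lie on $\Gamma_0$ itself makes $B(0,z_n)=T(0,z_n)$ exactly, with no error term and no coalescence estimate. Passing to a subsequence so that $z_{n_k}/g(z_{n_k})\to z\in\partial\mathcal{B}$ (whose direction lies in $S$ by Corollary~\ref{cor: directed_sector}), the shape theorem for $T$ gives $T(0,z_{n_k})/\|z_{n_k}\|_1\to g(z)=1$ while the shape theorem for $B$ (your item 2) gives $B(0,z_{n_k})/\|z_{n_k}\|_1\to\rho_B\cdot z$. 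Hence $\rho_B\cdot z=1$, and combined with the supporting inequality $\rho_B\cdot r\leq 1$ on $\mathcal{B}$, the line $\{r:\rho_B\cdot r=1\}$ supports $\mathcal{B}$ at the boundary point $z$. Since $\arg z\in S$ and the differentiability assumptions \eqref{eq: diff_assumption_1}--\eqref{eq: diff_assumption_2} force every supporting line at a point of direction in $S$ to coincide with the tangent line at $v_\theta$, this yields $\rho_B=\rho$. Note this only establishes $\rho_B\cdot z=1$ for some $z\in\partial\mathcal{B}$ with direction in $S$, not necessarily $z=v_\theta$; but that suffices. You should replace your near-coalescence step with this argument.
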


The limit $B$ is sometimes called a Busemann function. See \cite[Section~5]{survey}.

\subsection{Connection to the disordered ferromagnet}

Our main theorem on bigeodesics has implications for the ground states of disordered ferromagnetic spin models. The typical example is the disordered Ising ferromagnet, which we define below in detail only in two dimensions. Consider the dual lattice defined by
\[
\left( \mathbb{Z}_*^2, \mathcal{E}_*^2 \right) = \left( \mathbb{Z}^2, \mathcal{E}^2 \right) + \left( \frac{1}{2}, \frac{1}{2} \right).
\]
A spin configuration $\sigma$ is an element $(\sigma_x)_{x \in \mathbb{Z}_*^2} \in \{+1,-1\}^{\mathbb{Z}_*^2}$. Let $(J_{x,y})_{\{x,y\} \in \mathcal{E}_*^2}$ have joint distribution $\mu$ which is translation-ergodic and with $\mu(J_{x,y} > 0)=1$. The (random) energy of $\sigma$ relative to the couplings $(J_{x,y})$ is defined as 
\[
H_S(\sigma) = - \sum_{\{x,y\} \in \mathcal{E}_*^2: x \in S} J_{x,y} \sigma_x \sigma_y,
\]
where $S$ is a finite subset of $\mathbb{Z}^2_*$. A ground state is defined as a configuration $\sigma$ such that whenever $\hat \sigma$ agrees with $\sigma$ everywhere except on some finite set,
\[
H_S(\sigma) \leq H_S(\hat \sigma) \text{ for all finite } S \subset \mathbb{Z}_*^2.
\]

A fundamental open problem in the study of such spin systems is to determine the number of ground states for a given $(J_{x,y})$. It is conjectured that under some critical dimension (see \cite{FLN} and \cite{Tasaki} for predictions for scaling exponents and upper critical dimension), there are two almost surely (all plus or all minus), and this should include $d=2$. There are only partial rigorous results at this point to support the conjecture, and these come from analysis of FPP geodesics. To see the connection, we can define an FPP model based on the spin system by setting $t_e = J_{x,y}$, where $e$ is the edge in the primal lattice which is dual to $\{x,y\}$. Then, as shown in \cite[Propositions~1.1-1.2]{N97}, there are nonconstant ground states in the spin model if and only if the induced FPP model has bigeodesics.

To argue this, note that if there is a nonconstant ground state $\sigma$, the symmetric difference (set of vertices where spins differ) between $\sigma$ and the $+1$ ground state $\sigma_+$ cannot contain any finite components. To see why, if such a component $S$ exists, then by the ground state property, one would have $H_S(\sigma) = H_S(\sigma_+)$. However, the spin interactions in both states in the interior of $S$ are equal, and they differ only on the boundary. This implies the sum of energy terms for bonds on the boundary must be zero in both states, which is a contradiction if the coupling distribution is continuous. Therefore any nonconstant ground state must have a two-sided (and circuitless) infinite (primal lattice) path of edges dual to bonds $\{x,y\}$ with $\sigma_x = -\sigma_y$. We call such a path an interface. This interface can be seen to be a bigeodesic for the induced FPP model.

The results of Licea-Newman (Theorem~\ref{thm: LN} below) therefore rule out existence of nonconstant ground states with interface having both ends directed in the set $\mathcal{D}$. Our Theorem~\ref{thm: main_fake_thm} therefore shows that under a natural differentiability assumption, there can be no nonconstant ground states with an interface directed in any deterministic direction.

\subsection{BKS midpoint problem}

The following problem arose from the Benjamini-Kalai-Schramm work on sublinear variance in Bernoulli FPP, and has become known as a ``missing lemma."
\begin{question}
Is it true that
\begin{equation}\label{eq: BKS_convergence}
\mathbb{P}(\lfloor n/2 \rfloor e_1 \text{ is in a geodesic from }0 \text{ to } ne_1) \to 0?
\end{equation}
\end{question}
The authors wanted to verify this to control the influence of edge-weights for the random variable $T = T(0,ne_1)$. Specifically, it would help them to use an inequality of Talagrand that roughly gives a logarithmic improvement to Efron-Stein variance bounds in settings where each underlying variable has small influence on the variable in question. They were not able to solve this question, and got around it by using an averaged version of $T$ instead.

This problem is still not solved, but our theorems give a conditional positive answer given the assumption that the boundary of the limiting shape is differentiable. A similar result would hold with $e_1$ replaced by any deterministic vector.
\begin{thm}
Assume {\bf A1'} or {\bf A2'}. Further, for $\theta = 0$, assume \eqref{eq: diff_assumption_1} and \eqref{eq: diff_assumption_2}. Then \eqref{eq: BKS_convergence} holds.
\end{thm}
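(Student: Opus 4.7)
The plan is to argue by contradiction, reducing the statement to parts~(1) and~(3) of Theorem~\ref{thm: limits}. Suppose \eqref{eq: BKS_convergence} fails: there exist $\delta>0$ and $n_k\to\infty$ with
\[
\mathbb{P}\bigl(\lfloor n_k/2\rfloor e_1\in \Gamma(0,n_k e_1)\bigr)\ge\delta.
\]
Shifting by $-\lfloor n_k/2\rfloor e_1$ and setting $a_k:=-\lfloor n_k/2\rfloor$, $b_k:=\lceil n_k/2\rceil$, translation invariance of $\mathbb{P}$ yields $\mathbb{P}(\tilde A_k)\ge\delta$ for the event $\tilde A_k:=\{0\in\Gamma(a_k e_1,b_k e_1)\}$. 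Reverse Fatou for sets then gives $\mathbb{P}(E)\ge\delta$, where $E:=\limsup_k \tilde A_k$, so on a set of probability at least $\delta$, the event $\tilde A_k$ holds for infinitely many $k$.

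The next step is to extract, on $E$, a bigeodesic through $0$. When $\tilde A_k$ occurs, uniqueness of geodesics splits $\Gamma(a_k e_1,b_k e_1)$ at $0$ into two subpaths equal to $\Gamma(0,a_k e_1)$ reversed and $\Gamma(0,b_k e_1)$, sharing only the vertex $0$. Since $b_k e_1$ is directed in the sector $S$ associated to $\theta=0$, part~(1) of Theorem~\ref{thm: limits} gives edgewise convergence $\Gamma(0,b_k e_1)\to \Gamma_0$. The lattice symmetry $x\mapsto -x$ transfers hypotheses \eqref{eq: diff_assumption_1}--\eqref{eq: diff_assumption_2} from $\theta=0$ to $\theta=\pi$, so applying Theorem~\ref{thm: limits} in direction $\pi$ yields $\Gamma(0,a_k e_1)\to \Gamma_0'$, an infinite geodesic from $0$ directed in the opposite sector $S'$. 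On $E$, for each fixed $m$ and all sufficiently large $k$ at which $\tilde A_k$ holds, the first $m$ edges of $\Gamma_0$ together with the first $m$ edges of $\Gamma_0'$ form a contiguous simple subpath of $\Gamma(a_k e_1,b_k e_1)$ through $0$, hence are a geodesic segment. Any shared vertex of $\Gamma_0$ and $\Gamma_0'$ other than $0$ would similarly force both $\Gamma(0,a_k e_1)$ and $\Gamma(0,b_k e_1)$ to contain it for large $k$, violating simplicity of $\Gamma(a_k e_1,b_k e_1)$. Consequently $\Gamma_0\cup \Gamma_0'$ is a genuine doubly infinite simple path, and every finite subpath is a geodesic, so it is a bigeodesic through $0$ with one end directed in $S$.

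Such bigeodesics have probability zero by part~(3) of Theorem~\ref{thm: limits}, contradicting $\mathbb{P}(E)\ge\delta>0$. The heart of the argument is this ``bigeodesic extraction'' step: once $\tilde A_k$ is shown to occur infinitely often on a positive-probability set, the no-bigeodesic conclusion of Theorem~\ref{thm: limits} finishes the proof almost immediately. The only real point to verify is that the geodesic convergence in Theorem~\ref{thm: limits}(1) can be invoked on the (random) subsequence where $\tilde A_k$ occurs, which is immediate because that convergence holds along every (possibly random) sequence of endpoints directed in the appropriate sector.
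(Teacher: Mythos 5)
Your proposal is correct and follows essentially the same contradiction argument as the paper: translate by $-\lfloor n/2\rfloor e_1$, use a reverse-Fatou/limsup argument to obtain a positive-probability event on which $0$ lies in infinitely many geodesics $\Gamma(a_k e_1,b_k e_1)$, and then extract a bigeodesic with one end in $S$, contradicting Theorem~\ref{thm: limits}(3). The only cosmetic difference is that you invoke Theorem~\ref{thm: limits}(1) in both directions via the $x\mapsto -x$ symmetry to get full convergence of both halves, whereas the paper simply takes a subsequential limit near $0$ of the two-sided geodesics; both handle the backward half correctly.
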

\begin{proof}
Assume that $\limsup_n \mathbb{P}(\lfloor n/2 \rfloor e_1 \text{ is in a geodesic from }0 \text{ to } ne_1) > 0$. Then by translation invariance,
\[
\mathbb{P}(0 \text{ is in a geodesic from } \lfloor -n/2 \rfloor e_1 \text{ to } \lceil n/2 \rceil e_1 \text{ for infinitely many } n) > 0.
\]
Let $A$ be this event intersected with the probability one event from Theorem~\ref{thm: limits}.

On $A$, almost surely the limit of finite geodesics from 0 to $\lceil n/2 \rceil e_1$ exists and is directed in the sector $S$ given by the angles of contact of the unique tangent line to the limit shape in direction $\theta=0$ with the shape. If we let $\Gamma$ be any subsequential limit near zero of geodesics from $\lfloor -n/2 \rfloor e_1$ to $\lceil n/2 \rceil e_1$, then $\Gamma$ is a bigeodesic with one end directed in $S$. This is a contradiction to item 3 of Theorem~\ref{thm: limits}.
\end{proof}

\section{Background and sketch of proofs}

\subsection{Previous results on geodesics}\label{sec: background}

Work on infinite geodesics in FPP began with Wehr \cite{Wehr} and Wehr-Woo \cite{WW98} in the '90s. The first work showed that there are either zero or infinitely many bigeodesics, and the second showed that there are no bigeodesics confined to the upper half-plane. After this came the work of Licea-Newman, to which we compare our results. Their theorems have now been used in a large number of related models, with essentially no improvement (see \cite{BCK14, CT12, CP11, FP05} for a few). 
\begin{thm}[\cite{LN96, N95}]\label{thm: LN}
Suppose the distribution of $(t_e)$ is i.i.d. and each $t_e$ has continuous distribution.
\begin{enumerate}
\item There is a deterministic set $\mathcal{D} \subset [0,2\pi)$ whose complement has Lebesgue measure zero such that the following holds. For fixed $\theta \in \mathcal{D}$,
\[
\mathbb{P}(\text{there are two disjoint infinite geodesics in direction }\theta)=0.
\]
Furthermore, for each $\theta, \theta' \in \mathcal{D}$, almost surely, there is no bigeodesic with one end directed in direction $\theta$ and the other directed in direction $\theta'$.
\item Assume $\mathbb{E}e^{\alpha t_e}<\infty$ for some $\alpha>0$. Further, assume that the limit shape $\mathcal{B}$ is uniformly curved. Then almost surely, every infinite geodesic has an asymptotic direction and
\[
\mathbb{P}(\text{for all }\theta, \text{ there is an infinite geodesic in direction }\theta)=1.
\]
\end{enumerate}
\end{thm}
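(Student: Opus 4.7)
The plan is to handle the two items of Theorem~\ref{thm: LN} separately, using a ``planarity plus ergodicity'' philosophy for Part~1 and a ``curvature plus concentration'' philosophy for Part~2.

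For Part~1, I first fix a deterministic $\theta$ and study the event $G_\theta(x)$ that at least two vertex-disjoint infinite geodesics emanate from $x$, both directed in direction $\theta$. Translation invariance and ergodicity of $\mathbb{P}$ show that the spatial density of sites with $G_\theta$ is a.s.\ equal to $\mathbb{P}(G_\theta(0))$. The key ingredient is a planarity argument: if this probability were positive, then inside a large box around the origin a positive density of disjoint pairs of geodesics would have to aim into an arbitrarily thin angular sector around $\theta$ (by the shape theorem applied along each ray), and planarity of $\mathbb{Z}^2$ forbids packing that many disjoint ``tendrils'' into such a narrow sector without two of them crossing and then having to merge. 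To pass from a single $\theta$ to a full-measure set $\mathcal{D}$, I would integrate $\mathbb{P}(G_\theta(0))$ against Lebesgue measure on $[0,2\pi)$ via Fubini: the total angular ``budget'' of disjoint infinite geodesics from $0$ is finite in expectation, so $\{\theta : \mathbb{P}(G_\theta(0)) > 0\}$ must be null.

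The bigeodesic portion of Part~1 would then follow by organizing the (a.s.\ coalescing) infinite geodesics in direction $\theta \in \mathcal{D}$ into a forest $T_\theta$, and similarly a forest $T_{\theta'}$. A bigeodesic $B$ with ends in $\theta$ and $\theta'$ would be a doubly infinite path that is a branch of $T_{\theta}$ back in time and a branch of $T_{\theta'}$ forward in time. One then picks a far-away site $y$ whose $T_{\theta'}$-branch must coalesce with the forward ray of $B$; chasing back along the coalesced path produces, at a vertex whose density is positive by ergodicity, a configuration forbidden by the uniqueness established for direction $\theta$, yielding a contradiction.

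For Part~2, the main input is a Kesten-type concentration estimate $T(0,nx) - ng(x) = o(n)$ in a quantified (sub-linear) form, which follows from the exponential moment hypothesis. Combined with uniform curvature of $\partial \mathcal{B}$, this forces every geodesic from $0$ to $nx$ to lie in a sub-linearly thin tube around the segment $\overline{0,nx}$. For an infinite geodesic $(x_0,x_1,\dots)$, if $\arg x_n$ had two distinct limit points $\alpha\neq\beta$, then a long subpath from $x_m$ with $\arg x_m\approx\alpha$ to $x_{m+k}$ with $\arg x_{m+k}\approx\beta$ would have to exit every such tube around $\overline{0,x_{m+k}}$, contradicting the tube estimate. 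Hence $\arg x_n$ converges, giving the asymptotic direction. Existence of a geodesic in every direction $\theta$ follows by compactness: extract a subsequential limit of $\Gamma(0,nw_\theta)$ and use the tube estimate to identify its direction as $\theta$.

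The main obstacle I expect is twofold. First, making the ``too many disjoint geodesics cannot fit'' step in Part~1 quantitative enough to derive a contradiction from positive density of $G_\theta$; this requires carefully exploiting planarity together with some form of a Burton--Keane-style counting of semi-infinite trees. Second, the exceptional set $\mathcal{D}$ seems intrinsic to the Fubini step and cannot be removed by these methods, which is precisely the limitation that the Busemann-function technology developed later in the present paper is designed to bypass under a differentiability assumption.
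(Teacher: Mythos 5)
This theorem is stated in the paper as a citation to Licea--Newman and Newman; the paper gives no proof of it, so there is no in-paper argument to compare against. Evaluating your sketch on its own terms, Part~1 has an internal contradiction that exposes a real gap. You open by claiming a planarity argument that would give $\mathbb{P}(G_\theta(0))=0$ for each \emph{fixed} $\theta$, and you close by conceding that the exceptional set $\mathcal{D}$ is ``intrinsic to the Fubini step and cannot be removed by these methods.'' Both statements cannot hold: if the fixed-$\theta$ planarity argument worked, $\mathcal{D}$ would be all of $[0,2\pi)$, which is precisely the open problem that the present paper addresses (and only conditionally, under differentiability of $\partial\mathcal{B}$). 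The actual Licea--Newman structure is different. The Burton--Keane modification argument rules out a \emph{stronger} configuration than a single pair of disjoint $\theta$-directed geodesics --- essentially the coexistence of three such geodesics emanating from a line, which is what yields a boundary-versus-volume contradiction in a large box. To reach the full-measure conclusion, one then argues that the \emph{random} set of $\theta$ admitting a disjoint pair has Lebesgue measure zero almost surely (the ``wedges'' trapped between pairs are disjoint for distinct pairs, so their angular widths have bounded sum), and only afterward applies Fubini to convert an a.s.\ statement about a random null set into a deterministic full-measure $\mathcal{D}$. Your ``total angular budget is finite in expectation'' phrase gestures at this, but it never identifies the random quantity being integrated or why it is finite, and the ``planarity forbids packing'' step you explicitly flag as an obstacle is exactly the content that is missing.

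Part~2 is essentially Newman's argument and is correct in outline: the exponential moment hypothesis gives Kesten-type concentration of $T(0,nx)$ about $ng(x)$, uniform curvature of $\partial\mathcal{B}$ converts this into a sub-linear transversal-fluctuation (``tube'') estimate for finite geodesics, which forces $\arg x_n$ to converge along any infinite geodesic, and existence in each direction follows by extracting a subsequential limit of $\Gamma(0, n v_\theta)$ and localizing it with the tube estimate. This matches the approach in Newman's ICM proceedings and is a reasonable reconstruction of that proof.
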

\noindent
As mentioned in Remark~\ref{rem: directions}, although item 1 does not require a curvature assumption, it does not give much information without item 2, which does need curvature. The set $\mathcal{D}$ was defined indirectly, and so there is no useful characterization of it. Shortly after the publication of the above result, Zerner \cite[Theorem~1.5]{N97} showed that $D^c$ can be taken to be countable.

We aim to improve the above results by (a) allowing distributions on $(t_e)$ that are simply translation-ergodic and (b) showing that $\mathcal{D}$ can be taken to be the entire set $[0,2\pi)$. In even framing such results, we are faced with some complications. The main issue comes from the theorem of H\"aggstr\"om-Meester \cite[Theorem~1.3]{HM}, which states that given a convex, compact set in $\mathbb{R}^d$ which has the symmetries of $\mathbb{Z}^d$, there is a translation-ergodic model of FPP on $\mathbb{Z}^d$ with this set as its limit shape. In particular, there are two-dimensional models of FPP with limit shapes that are polygons. The theorems of Damron-Hanson \cite{DH} imply that for each face of some such polygonal limit shapes, there is a geodesic asymptotically directed in the sector corresponding to this face. It is reasonable to believe that one can construct models in which the only infinite geodesics are these, and they are directed merely in sectors, but not in directions $\theta$ (they wander across the sector). In fact, behavior for geodesics in some stationary models that is quite different from that predicted in the i.i.d. case has already been displayed; see \cite{BD02} for an example with exactly one bigeodesic. So for such models, item 2 above would be false, and item 1 would not give any information.

So we are led to consider directedness in sectors and we might hope that the following analogue of the Licea-Newman theorem holds: for each sector $S$, there is zero probability that there are two disjoint infinite geodesics directed in $S$. However, this might also be false, without extra assumptions. The reason is that it is reasonable that there exist other translation-ergodic FPP models whose limit shapes are polygons, and which have infinite geodesics directed toward the corners of the polygon (since these are ``fast'' directions). In such a case, each face will have at least two infinite geodesics directed in its associated sector. The solution to this is to assume that on each face of the limit shape, one has differentiability of the boundary at the two endpoints. This is precisely our assumption for our main theorems.

In the general translation-ergodic case, as mentioned above, Damron-Hanson \cite{DH} showed existence of infinite geodesics that are directed in sectors corresponding to sides of the limit shape; that is, in sectors of the form $S_\theta$. However, they did not need a differentiability assumption at the endpoints of the sector. They also showed forms of coalescence of these directed geodesics, building coalescing trees of them which have no backward paths. Those results did not address uniqueness. For example, it was possible that multiple disjoint geodesic trees existed, all directed in the same sector. The main contribution of our work is to complete the picture in the differentiable case, and this differentiability assumption, as discussed earlier, may indeed be necessary.

One can ask if it is possible to show a uniqueness statement simultaneously in all directions. In other words, can one show that under a differentiability assumption, almost surely, all geodesics directed in a sector coalesce, for all sectors simultaneously? Due to Remark~\ref{rem: directions}, this would be much closer to proving Conjecture~\ref{conj: bigeodesics}. Unfortunately, this is almost certainly false, as it is expected that there is a random, countable set of directions in which uniqueness does not hold. It is in fact not difficult to show existence of these directions, given our results, by considering competition interfaces. (See, for example, \cite[Theorem~2.6]{GRS2}, where this is done for last-passage percolation.) The conclusion is that to rule out bigeodesics in all directions at once, it is likely that more ideas are needed than arguing through uniqueness of one-sided infinite geodesics.

\subsection{Geodesics in related models}

After the papers of Newman and coauthors in the '90s, Howard and Newman introduced in \cite{HN97} a rotationally-invariant model called Euclidean FPP. This model is on a graph whose vertices are the points of a Poisson point process and the limit shape is therefore a ball. Using this rotational symmetry, they were able to prove \cite{HN01} non-existence of bigeodesics and uniqueness of infinite geodesics in deterministic directions. The reason is that one can still show that there is a deterministic set $D \subset [0,2\pi)$ as in Theorem~\ref{thm: LN}, but in the Euclidean model, it is a rotationally invariant set whose complement has measure zero. This implies that $D=[0,2\pi)$. Apart from \cite{DH}, we know of no other progress on directional geodesics in undirected models.

Much more is known in some related directed models. In the exactly solvable last-passage percolation model, one puts exponential or geometric weights on the vertices of the first quadrant of $\mathbb{Z}^2$ and considers the maximal passage time of directed paths. The limiting shape boundary for this model is given by an explicit formula, and is uniformly curved. Using this fact and the techniques of Licea-Newman, it was shown by Ferrari-Pimentel \cite{FP05} that there are uncountably many infinite geodesics, one in each direction. From this, one can show absence of bigeodesics in deterministic directions. Further work was done by Coupier in \cite{C11}.

In 2014, a theory of infinite geodesics for last-passage percolation (LPP) with general weights was given by Georgiou-Rassoul-Agha-Sepp\"al\"ainen \cite{GRS, GRS2} that parallels the one developed by Damron-Hanson in \cite{DH} for FPP. Using directedness of paths, they were able to go further than in \cite{DH}, proving uniqueness of infinite geodesics and absence of bigeodesics in deterministic directions. The main tool they used (which was missing in FPP) is a monotonicity for Busemann functions, which comes from a ``paths crossing'' trick initially due to Alm \cite{A98} and Alm and Wierman \cite{AW99} in the late '90s. (We note that monotonicity properties of Busemann functions were also derived by Cator-Pimentel in 2011 \cite{CP11} in an exactly solvable model.) The non-directedness is precisely the main issue that we must deal with in this paper -- the majority of our arguments serve to order geodesics and give a useful definition of a left-most and right-most geodesic.

\subsection{Sketch of proofs}

We give the idea of the proof of Theorem~\ref{thm: limits}. Theorem~\ref{thm: busemann} follows from standard arguments. Given $\theta \in [\pi/4,\pi/2]$ such that the limit shape boundary is differentiable at $v_\theta$, the point in direction $\theta$, we define the sector $S_\theta$ as in the last section: it is the sector of angles of contact with $\mathcal{B}$ of the tangent line to $\mathcal{B}$ in direction $\theta$. The main idea of the proof is to make a definition of a \emph{left-most and right-most} infinite geodesic directed in $S_\theta$ from each point. From a point $x$, we label these geodesics by $\Gamma_x^L$ and $\Gamma_x^R$. For the proof, our definition must satisfy the following properties for $* = L,R$:
\begin{enumerate}
\item $\Gamma_0^*$ is asymptotically directed in $S_\theta$.
\item If $y \in \Gamma_0^*$, then $\Gamma_y^*$ is the segment of $\Gamma_0^*$ from $y$ onward.
\item For distinct $x,y$, the paths $\Gamma_x^*$ and $\Gamma_y^*$ coalesce.
\end{enumerate}
Establishing these properties is the main difficulty in the argument, and this is really where the undirectedness of the model causes problems. The construction and properties of extremal geodesics is done in Section~\ref{sec: ordering}.

Once we have extremal geodesics, we can define Busemann functions for them. For $x,y \in \mathbb{Z}^2$ and $*=L,R$, we define
\[
B^*(x,y) = \lim_n \left[ T(x,x_n) - T(y,x_n) \right],
\]
where $x_1, x_2, \ldots$ is the sequence of vertices on $\Gamma_0^*$. Property 3 above implies that we could also make this definition using $\Gamma_z^*$ for any other $z$, and get the same limiting function. Using this property and the ergodic theorem, we can show that
\[
B^*(0,x) = f^*(x) + o(\|x\|_1) \text{ as } x \to \infty
\]
for some linear functional $f^*$.

The next step is to relate to extremal upper half-plane geodesics, which we also construct in Section~\ref{sec: ordering}. 
These geodesics $\Gamma_{x,H}^*$ can be shown (in Proposition~\ref{prop: equality}) to be equal to $\Gamma_x^*$ for a positive density of $x$ on the boundary of the half-plane. Using this fact, we can show that the half-plane Busemann functions $B_H^*$ have the same asymptotic behavior as the full-plane $B^*$.

For the half-plane Busemann functions, we can prove a type of monotonicity (similar to the one known in LPP) in Section~\ref{sec: equal_geodesics}. Namely, if we define
\[
\Delta_H(x,y) = B_H^L(x,y) - B_H^R(x,y),
\]
then one has $\Delta_H(0,e_1) \leq 0$ almost surely. This comes from the ``paths crossing'' argument. However, if $\Gamma_{0,H}^L \neq \Gamma_{0,H}^R$ with positive probability, uniqueness of passage times implies in Proposition~\ref{prop: bigger} that $\mathbb{E}\Delta_H(0,e_1) < 0$ and by the ergodic theorem,
\[
\liminf_n \Delta_H(0,ne_1)/n < 0.
\]
(This part of the argument is analogous to the LPP case \cite[Theorem~2.1(iii)]{GRS2}.) Because $B_H^*(x) = f_H^*(x) + o(\|x\|_1)$ for some linear functional $f_H^*$, we deduce that
\[
f_H^L \neq f_H^R.
\]
On the other hand, using our differentiability assumption, we derive in Proposition~\ref{prop: delta_0} that in fact $f_H^L = f_H^R$, giving a contradiction. Therefore $\Gamma_{0,H}^L = \Gamma_{0,H}^R$ almost surely, and in Proposition~\ref{prop: equality}, we argue the same for the full-plane geodesics. Since the extremal geodesics coincide, we deduce uniqueness; that is, all infinite geodesics directed in $S_\theta$ coalesce.

To move from uniqueness to absence of bigeodesics, we consider the union of all infinite geodesics that are directed in $S_\theta$. This union forms a tree whose vertex set is $\mathbb{Z}^2$ and whose edge set consists of all geodesics $\Gamma_x^*$ for $x \in \mathbb{Z}^2$ and $*=L$ or $R$. We appeal to a mass-transport type result established in Damron-Hanson \cite{DH} to deduce that any such tree cannot have infinite backward paths. In other words, no infinite geodesic directed in $S_\theta$ is a subpath of a bigeodesic with one end directed in $S_\theta$, and this completes the proof.

\section{Ordering geodesics}\label{sec: ordering}

\subsection{Topological preliminaries}

From this point on, by symmetry we will take $\theta \in [\pi/4,\pi/2]$ and recall our assumptions that
\begin{enumerate}
\item $\partial \mathcal{B}$ is differentiable at $v_\theta$ and
\item $\partial \mathcal{B}$ is differentiable at $v_{\theta_1}$ and $v_{\theta_2}$, where $\theta_1$ and $\theta_2$ are the endpoints of the sector of angles of contact of $\mathcal{B}$ with the unique tangent line to $\mathcal{B}$ at $v_\theta$.
\end{enumerate}
Recall that by symmetry and differentiability,
\[
0 < \theta_1 \leq \theta_2 < \pi.
\]
This ensures that each infinite geodesic directed in $S$ only intersects $L_0 = \{(x,0) : x \in \mathbb{Z}\}$ finitely often, and therefore eventually remains on one side of it.

We will need the following result, which follows from \cite[Corollary~1.3]{DH}.
\begin{prop}\label{prop: trapping_tools}
There exist sequences of sectors $(\Theta_n^L), \, (\Theta_n^R)$ in $(0,\pi)$ such that, almost surely, there exist sequences $(\Gamma_n^L)$ and $(\Gamma_n^R)$ of infinite geodesics in $\mathbb{Z}^2$ starting from 0, with each $\Gamma_n^*$ directed in $\Theta_n^*$ for $* = L, R$. Moreover,
\begin{enumerate}
\item $\inf \Theta_{n+1}^R > \sup \Theta_n^R$ and $\sup\Theta_{n+1}^L < \inf \Theta_n^L$ for all $n$ and
\item $\inf \Theta_n^R \to \theta_1$ and $\sup \Theta_n^L \to \theta_2$.
\end{enumerate}
\end{prop}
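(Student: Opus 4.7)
The plan is to reduce the statement to \cite[Corollary~1.3]{DH}, which under either {\bf A1'} or {\bf A2'} supplies, for every $\phi \in [0,2\pi)$, an infinite geodesic from $0$ almost surely directed in the sector $S_\phi$ of angles of contact of a supporting line to $\mathcal{B}$ at $v_\phi$. It therefore suffices to exhibit countable families of maximal contact sectors $\Theta_n^L, \Theta_n^R$ with the required disjointness and convergence: a countable intersection of almost-sure events then produces the sequences $(\Gamma_n^L), (\Gamma_n^R)$ simultaneously.

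I would construct $(\Theta_n^L)$ first, the right-side construction being symmetric. The maximal sectors of contact partition $(0,\pi)$ up to endpoints, and differentiability of $\partial \mathcal{B}$ at $v_{\theta_2}$ rules out any maximal contact sector other than $S$ having $\theta_2$ as an endpoint, since such a sector would correspond to a supporting line at $v_{\theta_2}$ distinct from the unique tangent at $v_\theta$. Hence every maximal contact sector meeting $(\theta_2,\pi)$ lies strictly inside it. Inductively choose $\phi_n^L \searrow \theta_2$ with $\phi_{n+1}^L \in (\theta_2,\inf \Theta_n^L) \setminus E$, where $E$ is the (at most countable) set of endpoints of maximal contact sectors, and let $\Theta_n^L$ be the maximal contact sector containing $\phi_n^L$. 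Disjointness of contact sectors gives $\sup \Theta_{n+1}^L < \inf \Theta_n^L$, and from $\phi_n^L \in \Theta_n^L$ with $\phi_n^L \searrow \theta_2$ one concludes $\sup \Theta_n^L \searrow \theta_2$.

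The right-side sequence is built identically: pick $\phi_n^R \in (\sup \Theta_{n-1}^R,\theta_1) \setminus E$ with $\phi_n^R \nearrow \theta_1$, using differentiability at $v_{\theta_1}$ to forbid any contact sector distinct from $S$ from having $\theta_1$ as an endpoint. Applying \cite[Corollary~1.3]{DH} to each of the countably many $\Theta_n^L, \Theta_n^R$ completes the construction.

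The main obstacle is the convex-analytic input, namely the verification that differentiability at $v_{\theta_i}$ prevents any contact sector other than $S$ from touching $\theta_i$. Without this, the upper endpoints of the $\Theta_n^L$ could remain bounded away from $\theta_2$ (for example if a macroscopic flat edge abutted $v_{\theta_2}$ from above), and the squeeze $\sup \Theta_n^L \to \theta_2$ would fail; everything else is a bookkeeping application of the existence result from \cite{DH}.
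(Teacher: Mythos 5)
Your argument is essentially the same as the paper's: both reduce to the Damron--Hanson directedness result and use the differentiability of $\partial \mathcal{B}$ at $v_{\theta_1}, v_{\theta_2}$ to obtain sectors converging to $\theta_1, \theta_2$ from outside $S$. The paper phrases the \cite{DH} input in terms of extreme points (there exist extreme points accumulating at $v_{\theta_i}$ from outside $S$, since otherwise $\partial \mathcal{B}$ would have a flat segment meeting the flat segment over $S$ at $v_{\theta_i}$, contradicting either differentiability or that $\theta_i$ is an endpoint of $S$), while you phrase it in terms of maximal contact sectors; these are the same convex-analytic fact.

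There is one small but genuine gap in the strict inequality step. You write ``disjointness of contact sectors gives $\sup \Theta_{n+1}^L < \inf \Theta_n^L$,'' but distinct maximal contact sectors can share an endpoint (think of two adjacent faces of a polygonal limit shape, which are allowed under {\bf A2'}). In that situation, choosing $\phi_{n+1}^L$ just below $\inf \Theta_n^L$ may land you in the contact sector immediately adjacent to $\Theta_n^L$, giving $\sup \Theta_{n+1}^L = \inf \Theta_n^L$, which violates the strict inequality the proposition asserts. That strictness is actually used downstream: Proposition~\ref{prop: trapping_tools_hp} selects an angle $\phi \in (\sup \Theta_{n+1}^L, \inf \Theta_n^L)$, so this interval must be nonempty. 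The repair is easy --- at each stage, first pick the contact sector $\Theta'$ adjacent to $\Theta_n^L$ (if any), and then take $\phi_{n+1}^L$ below $\inf \Theta'$, which forces $\sup \Theta_{n+1}^L \leq \inf \Theta' < \inf \Theta_n^L$; equivalently, pass to every other sector along the chain. With that adjustment your argument is complete and matches the paper's.
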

\begin{proof}
In \cite{DH}, it is shown that if $v,w$ are extreme points of $\mathcal{B}$ and $\Theta$ is a sector of angles between them, there is an infinite geodesic directed in $\Theta$. In our case, $\partial \mathcal{B}$ is differentiable at each $v_{\theta_i}$, the endpoints of the sector $S$, so we can find sequences of extreme points converging to each $v_{\theta_i}$ from outside $S$. So we can apply the result of \cite{DH} in sectors converging to each $\theta_i$ as in the statement.
\end{proof}

It is important to note that the conditions stated in the previous proposition indeed comprise an event. In other words, given a sector $\Theta$, the set of passage-time configurations for which there is an infinite geodesic from 0 directed in $\Theta$ is measurable. This follows from arguments similar to those in \cite[Appendix~A]{N97}. One caveat, however, is that the geodesics $\Gamma_n^L$ and $\Gamma_n^R$ constructed in \cite{DH} are on a larger probability space, and the arguments therein do not directly allow them to be defined in a $(t_e)$-measurable manner. This potential lack of measurability is not important for the following arguments, as we will always fix an outcome $\omega$ and choose the geodesics arbitrarily.

In what follows, we will make use of the first-passage model on the half-plane $\mathbb{H}$, whose vertices form the set
\[
V_H = \{(x,y) \in \mathbb{Z}^2 : y \geq 0\}
\]
and whose edges form the set $E_H$ of nearest-neighbor edges. Let $T_H = T_H(x,y)$ be the half-plane passage time for $x,y \in V_H$ with $\Gamma_H(x,y)$ the unique half-plane geodesic from $x$ to $y$. We extend the various notions of directedness to $\mathbb{H}$ in the obvious way.

\begin{prop}\label{prop: trapping_tools_hp}
The statement of Proposition \ref{prop: trapping_tools} holds on $\mathbb{H}$.
\end{prop}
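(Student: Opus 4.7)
The plan is to reproduce the proof of Proposition \ref{prop: trapping_tools} inside $\mathbb{H}$. The only additional input needed is a half-plane analog of \cite[Corollary~1.3]{DH}: almost surely, for any two extreme points $v, w$ of $\mathcal{B}$ such that the sector $\Theta$ strictly between them is contained in $(0,\pi)$, there exists an infinite half-plane geodesic from $0$ directed in $\Theta$. Granted this analog, the remainder of the argument is identical to that of Proposition \ref{prop: trapping_tools}: the differentiability of $\partial \mathcal{B}$ at $v_{\theta_1}$ and $v_{\theta_2}$ yields sequences of extreme points of $\mathcal{B}$ accumulating at each $v_{\theta_i}$ from outside $S$, and the half-plane analog then supplies the required sequences of geodesics $\Gamma_n^L, \Gamma_n^R$ from $0$ in nested sectors $\Theta_n^L, \Theta_n^R$ converging to $\theta_2$ and $\theta_1$ with the stated monotonicity.

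To establish the half-plane analog, I would first record that the half-plane time constant agrees with $g$ on the open upper half-plane: $T_H(0, nv)/n \to g(v)$ almost surely whenever $v \cdot e_2 > 0$. The inequality $T_H \geq T$ is immediate, and the reverse direction follows from a standard lifting argument, by which any near-optimal full-plane path between points in $\mathbb{H}$ can be replaced by one confined to $\mathbb{H}$ at a sublinear cost (the shape theorem controls the total height of downward excursions). In particular the extreme points of $\mathcal{B}$ in the open upper half-plane remain extreme for the half-plane limit shape. I would then revisit the construction behind \cite[Corollary~1.3]{DH}, which builds directed infinite geodesics via a mass-transport/ergodic estimate together with a tree/coalescence procedure. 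Two features make the transfer to $\mathbb{H}$ natural: $\mathbb{P}$ restricted to $\mathbb{H}$ is invariant and ergodic under horizontal translation under either \textbf{A1'} or \textbf{A2'}, and since the sectors under consideration lie strictly inside $(0,\pi)$, every geodesic directed in one of them is at positive distance from $L_0$ after a bounded initial segment, so the boundary of $\mathbb{H}$ does not interfere with the local topological parts of the construction.

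The main obstacle is the careful bookkeeping required in the previous step: the proof in \cite{DH} uses the full translation and reflection symmetry of $\mathbb{Z}^2$ in several places, and one must patiently check that each such use can either be localized to a large horizontal strip in $\mathbb{H}$ or replaced by a horizontal-only argument. If this direct adaptation turns out to be too delicate, a cleaner alternative route is the following: apply Proposition \ref{prop: trapping_tools} in the full plane, take a full-plane geodesic $\tilde\Gamma$ from $0$ directed in the appropriate sector $\Theta_n^*$, and truncate it at its last visit to $L_0$ to obtain a half-plane geodesic from a random vertex $z \in L_0$ directed in $\Theta_n^*$. One then transports the basepoint back to $0$ via horizontal translation invariance of the distribution, upgrading from positive probability to probability one by combining horizontal ergodicity with the finite-energy hypothesis (f) of \textbf{A2'} (or the continuity of the common law under \textbf{A1'}) to modify weights near $0$ and force a half-plane geodesic to emanate from $0$ in the required sector.
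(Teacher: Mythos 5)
Your second route is the closer of the two to the paper's argument --- both begin from the observation, obtained by truncating full-plane geodesics at their last visit to $L_0$, that some vertex in $L_0$ has a half-plane infinite geodesic directed in $\Theta_n^*$ with positive probability. But the last step you sketch, ``upgrading from positive probability to probability one by combining horizontal ergodicity with the finite-energy hypothesis \ldots\ to modify weights near $0$ and force a half-plane geodesic to emanate from $0$,'' is a genuine gap. The event that the fixed vertex $0$ supports a half-plane geodesic directed in $\Theta_n^*$ is not invariant under horizontal translation (it is anchored at $0$), so ergodicity does not push its positive probability up to one; and a local finite-energy modification changes the configuration (and in general destroys far-away geodesics), so it provides no mechanism for converting a positive-probability event into an almost sure one. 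Indeed, the paper does not claim --- and it is likely false --- that each fixed vertex almost surely supports a half-plane geodesic directed in the original sector $\Theta_n^*$.

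The paper's resolution, which is the idea missing from your sketch, is a trapping argument that produces a geodesic from $0$ directed in a slightly \emph{larger} sector. By the ergodic theorem one finds, almost surely, vertices $x$ to the left of $0$ and $y$ to the right with half-plane infinite geodesics $\Gamma^1$, $\Gamma^2$ directed in $\Theta_n^L$ and $\Theta_{n+1}^L$ respectively. Choosing $\phi$ in the gap $(\sup\Theta_{n+1}^L,\inf\Theta_n^L)$ and taking a subsequential limit $\Gamma$ of the finite half-plane geodesics $(\Gamma_H(0,mv_\phi))_m$, planarity and uniqueness of passage times force $\Gamma$ to stay in the region of $\mathbb{H}$ bounded by $\Gamma^1$, $\Gamma^2$ and the segment of $L_0$ between $x$ and $y$, hence to be directed in the enlarged sector $\Theta_n^{H,L} := [\inf\Theta_{n+1}^L,\sup\Theta_n^L]$. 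These wider sectors still shrink to $\theta_2$ (resp.\ $\theta_1$), and after passing to a subsequence they satisfy the nesting requirements, which is all that Proposition~\ref{prop: trapping_tools_hp} demands. Your first route --- proving a half-plane analog of \cite[Corollary~1.3]{DH} --- would sidestep the trapping step entirely, but as you acknowledge it amounts to redoing the whole construction of \cite{DH} inside $\mathbb{H}$; the paper's short trapping argument shows that this heavy machinery is unnecessary.
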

\begin{proof}
  Fix a choice of $n$ and recall the sequences of geodesics and sectors from Proposition \ref{prop: trapping_tools}. We will show that there is a sector $\Theta_n^{H,L}$ lying between $\Theta_{n}^L$ and $\Theta_{n+1}^L$ and a half-plane geodesic $\Gamma_n^{H,L}$ starting from $0$ directed in $\Theta_n^{H,L}$. Specifically, we will take $\Theta _n^{H,L} = [\inf \Theta_{n+1}^L, \sup \Theta_n^L]$. This (combined with an identical argument for $\Gamma_n^{H,R}$) suffices to establish the proposition.

\begin{figure}[h]
\setlength{\unitlength}{.5in}
\begin{picture}(10,3)(-5.5,2.5)
\linethickness{3pt}
\put(-4,2.5){\line(1,0){10}}
\linethickness{1pt}
\put(-2.4,3.2){$\Gamma^1$}
\put(-2.1,2.2){$x$}
\put(-2,2.5){\line(0,1){1}}
\put(-2,3.5){\line(1,0){2}}
\put(0,3.5){\line(0,1){.5}}
\put(0,4){\line(1,0){1}}
\put(1,4){\line(0,1){.5}}
\put(1,4.5){\line(1,0){1}}
\put(2,4.5){\line(0,1){1}}
\put(2,5.5){\line(1,0){.5}}
\put(2.5,5.5){\line(0,1){.5}}
\put(1.4,2.2){$y$}
\put(4,3){$\Gamma^2$}
\put(1.5,2.5){\line(0,1){.5}}
\put(1.5,3){\line(1,0){2}}
\put(3.5,3){\line(0,1){.5}}
\put(3.5,3.5){\line(1,0){.5}}
\put(4,3.5){\line(0,1){.5}}
\put(4,4){\line(1,0){1}}
\put(5,4){\line(0,1){.5}}
\put(5,4.5){\line(1,0){1}}
\put(-.2,2.2){$0$}
\linethickness{2pt}
\put(0,2.5){\line(0,1){.5}}
\put(0,3){\line(-1,0){1}}
\put(-1,3){\line(0,1){2}}
\put(-1,5){\circle*{.2}}
\put(-1.3,5){$z$}
\put(-1,5){\line(1,0){4}}
\put(0,5.2){$\Gamma_H(0,mv_\phi)$}
\put(3.2,5){$[mv_\phi]$}
\put(3,5){\circle*{.2}}
\linethickness{1pt}
\put(-1,5){\line(0,1){1}}
\put(-1.3,5.6){$\Gamma$}
\end{picture}
\caption{Illustration of the argument of Proposition~\ref{prop: trapping_tools_hp}. The half-plane infinite geodesics $\Gamma^1$ and $\Gamma^2$ are directed in $\Theta_n^L$ and $\Theta_{n+1}^L$. The region between them in the upper half-plane is denoted $R$. $\Gamma$ is an infinite geodesic that stays in the upper half-plane and touches $z$, which is a vertex that is not in $R$ or its boundary. Since $z$ is also in $\Gamma_H(0,mv_\phi)$ (in bold), and this geodesic must cross back into $R$ to reach $mv_\phi$.}
\label{fig: fig_1}
\end{figure}
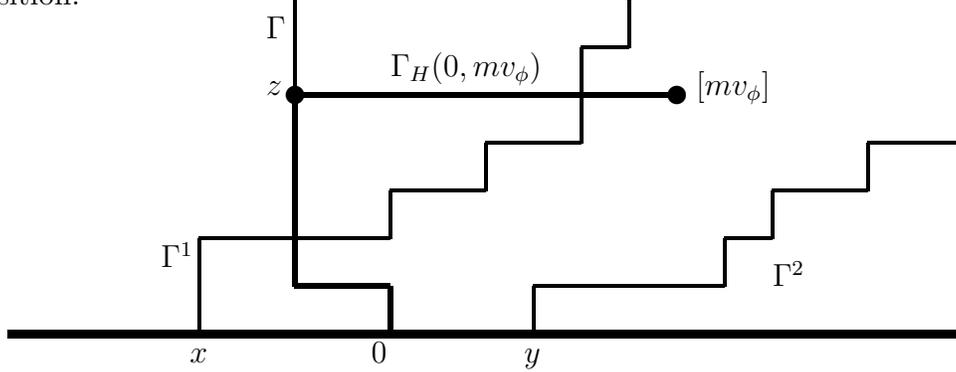

See Figure~\ref{fig: fig_1} for a depiction of the following argument. 
Since $\Gamma_n^L$ has a last intersection with $L_0$, we see that some vertex on $L_0$ has positive probability to have a half-plane geodesic directed in $\Theta_n^L$. (Again, by measurability arguments in \cite[Appendix~A]{N97}, this condition defines an event.) By the ergodic theorem, there almost surely exist $x$ to the left of $0$ and $y$ to the right of $0$ and $\Gamma^1$, $\Gamma^2$ infinite geodesics in $\mathbb{H}$ starting at $x$ and $y$ (respectively) and directed in $\Theta_n^L$ and $\Theta_{n+1}^L$ (respectively).

Now look on the event that (a) there exist $x$ and $y$ as above and (b) all distinct finite paths have distinct passage times. Choose an angle $\phi$ in $(\sup \Theta_{n+1}^L, \inf \Theta_{n}^L)$, and let $\Gamma$ be a subsequential limit of $(\Gamma_H(0, nv_\phi))_{n \geq 1}$. We claim that $\Gamma$ is directed in $\Theta_n^{H,L}$. Suppose to the contrary that there were some $\epsilon > 0$ and infinitely many vertices $z \in \Gamma$ with 
\[\arg z \notin [\inf \Theta_{n+1}^L - \epsilon, \sup \Theta_n^L + \epsilon]\ .\] 
Now, let $P$ be the simple path formed by the union of $\Gamma^1, \Gamma^2$, and the segment of the $e_1$-axis between $x$ and $y$, and let $R$ be the component of $\mathbb{R}^2 \setminus P$ which contains all but finitely many $\{nv_\phi\}$.

By the definition of $\Gamma$, there is a vertex $z \in (\mathbb{R}^2 \setminus (R \cup P))$ and a value of $m$ such that (a vertex within distance 1 of) $m v_\phi$ lies in $R$ and $z \in \Gamma_H(0, m v_\phi)$. In particular, the segment of $\Gamma(0, m v_\phi)$ from $z$ to $m v_\phi$ connects $R$ to the other component of $\mathbb{R}^2 \setminus P$ and so must intersect $P$. This contradicts uniqueness of passage times.
\end{proof}

As a consequence, we have the following global results on directionality. For the remainder of this section, we work on the event 
\begin{equation}\label{eq: X_def}
\mathcal{X},~ \text{defined by the following conditions:}
\end{equation}
\begin{enumerate}
\item for all $n$ and $x \in L_0$, there exist infinite geodesics $\Gamma_n^*$ and $\Gamma_n^{H,*}$ for $* = L,R$ from $x$ that are directed in $\Theta_n^*$ and $\Theta_n^{H,*}$ (the second sequence consists of half-plane infinite geodesics), and
\item all distinct finite paths have distinct passage times.
\end{enumerate}

\begin{cor}\label{cor: directed_sector}
For $\omega \in \mathcal{X}$, the following holds. Let $x_1, x_2, \ldots$ be any (random) sequence in $\mathbb{Z}^2$ which is asymptotically directed in $S$ with $x_n \to \infty$. Then for each $x \in \mathbb{Z}^2$, any subsequential limit of the finite geodesics $(\Gamma(x,x_n))$ is asymptotically directed in $S$. If additionally each $x_i$ and $x$ is in $V_H$, then any subsequential limit of $(\Gamma_H(x,x_n))$ is asymptotically directed in $S$.
\end{cor}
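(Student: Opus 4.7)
My plan is to argue by contradiction in the spirit of Proposition~\ref{prop: trapping_tools_hp}, using the trapping geodesics furnished by condition (1) of $\mathcal{X}$ to confine any subsequential limit. Suppose some subsequential limit $\Gamma$ of $(\Gamma(x,x_n))$ is not directed in $S$. Then there exist $\epsilon > 0$ and vertices $w_k$ on $\Gamma$ with $\|w_k\|_1 \to \infty$ and $\mathrm{arg}\, w_k \notin [\theta_1 - \epsilon, \theta_2 + \epsilon]$ for all $k$. After passing to a subsequence I may assume $\mathrm{arg}\, w_k \to \theta^*$ with $\theta^* < \theta_1 - \epsilon$; the case $\theta^* > \theta_2 + \epsilon$ is symmetric and the case of $\theta^*$ in the lower half-plane is handled in the same way with the trapping geodesic $\Gamma^R$ replaced by a sufficiently long segment of $L_0$.

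The main construction is the trapping region. Since $\inf \Theta_n^R \to \theta_1$ and $\sup \Theta_n^L \to \theta_2$, I choose $N$ with $\Theta_N^R \subset (\theta_1 - \epsilon/2, \theta_1)$ and $\Theta_N^L \subset (\theta_2, \theta_2 + \epsilon/2)$. By condition (1) of $\mathcal{X}$, for any starting vertex $y \in L_0$ there is an infinite geodesic from $y$ directed in $\Theta_N^L$ (and likewise $\Theta_N^R$). I pick $y^L$ far to the left and $y^R$ far to the right on $L_0$ and let $\Gamma^L, \Gamma^R$ be corresponding trapping geodesics. Together with the segment of $L_0$ between $y^L$ and $y^R$, these paths bound a simply connected region $R$ in the upper half plane whose angular support at infinity is the open sector $(\sup \Theta_N^R, \inf \Theta_N^L) \supset S$. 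Choosing $y^L, y^R$ sufficiently far apart ensures that all but finitely many $x_n$ lie in $R$, and that $x \in \overline R$ if $x$ lies in the upper half plane; if $x$ lies in the lower half plane, I replace $x$ by the first vertex at which $\Gamma(x,x_n)$ enters $\overline R$, which serves the same purpose.

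The endgame is the planarity/uniqueness argument. For large $k$, the vertex $w_k$ lies outside $\overline R$, because $\mathrm{arg}\, w_k \to \theta^* < \inf \Theta_N^R$ while the part of $\overline R$ at large radius is confined to angles in $[\sup \Theta_N^R, \inf \Theta_N^L]$. Because $w_k$ lies on a subsequential limit of $\Gamma(x,x_n)$, the finite geodesic $\Gamma(x, x_{n_k})$ contains $w_k$ for all large $k$ along the realizing subsequence. Since both endpoints $x$ and $x_{n_k}$ lie in $\overline R$, this geodesic must cross $\partial R$ an even number of times, and the excursion containing $w_k$ leaves $R$ at some vertex $u \in \partial R$ and re-enters at some $v \in \partial R$; since $\theta^* < \theta_1$ and by our choice of the trapping sectors, both $u$ and $v$ lie on the same boundary component, namely on $\Gamma^R$. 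As $\Gamma^R$ is an infinite geodesic, its subpath from $u$ to $v$ is a geodesic from $u$ to $v$, and condition (2) of $\mathcal{X}$ (unique passage times) forces it to coincide with the subpath of $\Gamma(x, x_{n_k})$ from $u$ to $v$. But the latter passes through $w_k \notin \overline R$, while the former lies in $\overline R$, a contradiction.

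The half-plane statement follows by the identical argument, with Proposition~\ref{prop: trapping_tools_hp} supplying the half-plane trapping geodesics $\Gamma_n^{H,L}, \Gamma_n^{H,R}$ and $T_H$-geodesics replacing $T$-geodesics throughout. The main obstacle I anticipate is the topological bookkeeping in the second step: arranging that the trapping region simultaneously contains $x$ (or at least the relevant upper-half-plane tail of $\Gamma(x, x_n)$) and the tail of $(x_n)$, and verifying, using planarity of $\mathbb{Z}^2$ and simplicity of lattice geodesics, that the boundary crossings of $\Gamma(x, x_{n_k})$ can be matched on a single component of $\partial R$ so that uniqueness of passage times applies. These details are analogous to those already handled in the proof of Proposition~\ref{prop: trapping_tools_hp}.
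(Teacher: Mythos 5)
Your overall strategy (trap the geodesic between two geodesics furnished by $\mathcal{X}$ and invoke uniqueness of passage times) is the right one, but your implementation departs from the paper's in a way that opens a real gap in the full-plane case.

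The paper takes the two trapping geodesics $\Gamma^L$ and $\Gamma^R$ to emanate from the \emph{same} vertex $0$, the starting vertex of $\Gamma(0,x_n)$. They split at some vertex $z$, and the Jordan curve $P$ is the doubly-infinite simple path formed by their two tails from $z$ onward. Crucially, $P$ consists entirely of geodesic segments, and any vertex $a$ where $\Gamma(0,x_n)$ meets $P$ forces, by unique passage times, the initial segment of $\Gamma(0,x_n)$ from $0$ to $a$ to coincide with the corresponding segment of $\Gamma^L$ or $\Gamma^R$. The contradiction follows immediately. Your construction instead starts $\Gamma^L$ and $\Gamma^R$ from distinct vertices $y^L,y^R$ on $L_0$ and closes up the region $R$ with the segment of $L_0$ between them. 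That $L_0$-segment is not a geodesic, and this is where your argument breaks: the claim that the exit and re-entry points $u$ and $v$ of the excursion containing $w_k$ must both lie on $\Gamma^R$ is not justified. A full-plane geodesic $\Gamma(x,x_{n_k})$ can leave $\overline R$ through $\Gamma^R$, pass through $w_k$, dip below $L_0$, and re-enter $\overline R$ through the $L_0$-segment of $\partial R$; then $u$ and $v$ are on different boundary components and the uniqueness-of-passage-times step has nothing to bite on. Nothing in the choice of trapping sectors or in the angular location $\theta^* < \theta_1 - \epsilon$ of $w_k$ rules out such a subterranean detour.

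It is worth noting that in the half-plane version (and in the paper's Proposition~\ref{prop: trapping_tools_hp}, which your construction is modeled on) this issue genuinely does not arise, because a half-plane geodesic is confined to $V_H$ and so cannot cross the $L_0$-segment --- it can only touch it. But in the full-plane statement you need to prove, the geodesic is free to leave the upper half-plane. To repair your argument you would either need to adopt the paper's common-origin device, or else additionally control backtracking below $L_0$ along the lines of Corollary~\ref{cor: out_tree}. A secondary, smaller issue: the claim that taking $y^L,y^R$ far apart forces $x\in\overline R$ is plausible but not argued; the trapping geodesics from $y^L,y^R$ are not under your control near the axis and could in principle exclude $x$ from $R$ regardless of how far out you push $y^L,y^R$. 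The paper sidesteps both problems at once by reducing to $x=0$ and rooting all three paths there.
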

\begin{proof}
For simplicity, we give the proof in the case $x=0$; otherwise, similar arguments apply. Let $\Gamma$ be a subsequential limit of $(\Gamma(x,x_n))$ and write the vertices of $\Gamma$ in order as $0, y_1, y_2, \ldots$. Suppose that $(\mathrm{arg}~y_n)$ has a limit point in $S^c$ with distance $\epsilon$ to $S$. We first consider the setting of $\Z^2$. See Figure~\ref{fig: fig_2} for a depiction of the following argument.

\begin{figure}[h]
\setlength{\unitlength}{.5in}
\begin{picture}(10,3)(-5.5,2.5)
\linethickness{3pt}
\put(-4,2.5){\line(1,0){10}}
\linethickness{1pt}
\put(1,2.5){\line(0,1){.5}}
\put(1,3){\circle*{.2}}
\put(.9,3.2){$z$}
\put(1,3){\line(1,0){1}}
\put(2,3){\line(0,1){.5}}
\put(2,3.5){\line(1,0){.5}}
\put(2.5,3.5){\line(0,1){1.5}}
\put(1.7,4){$\Gamma^R$}
\put(2.5,5){\line(1,0){1}}
\put(3.5,5){\line(0,1){.5}}

\put(1,3){\line(-1,0){.5}}
\put(.5,3){\line(0,1){2}}
\put(0,4){$\Gamma^L$}
\put(.5,5){\line(1,0){1}}
\put(1.5,5){\line(0,1){.5}}

\linethickness{2pt}
\put(1,2.5){\line(0,1){.25}}
\put(1,2.75){\line(1,0){2}}
\put(3,2.75){\line(0,1){.25}}
\put(3,3){\line(1,0){.5}}
\put(3.5,3){\circle*{.2}}
\put(3.7,3){$y_k$}
\put(3.5,3){\line(0,1){1.5}}
\put(3.5,4.5){\line(-1,0){.5}}
\put(3,4.5){\line(0,1){.75}}
\put(3,5.25){\circle*{.2}}
\put(2.5,5.25){$x_n$}
\linethickness{1pt}
\put(3.5,3.5){\line(1,0){1}}
\put(4.5,3.5){\line(0,1){1}}
\put(4.5,4.5){\line(1,0){.5}}
\put(5,4.5){\line(0,1){1}}
\put(5,4.2){$\Gamma$}

\end{picture}
\caption{Illustration of the argument of Corollary~\ref{cor: directed_sector}. The infinite geodesics $\Gamma^L$ and $\Gamma^R$ are directed in $\Theta^L$ and $\Theta^R$. $R$ is the region between them and contains all but finitely many $x_n$. $\Gamma$ is subsequential limit of the $\Gamma(0,x_n)$'s and contains a $y_k$ which is not in $R$ or its boundary. The finite geodesic $\Gamma(0,x_n)$ (in bold) contains $y_k$ but must then cross into $R$ to touch $x_n$.}
\label{fig: fig_2}
\end{figure}
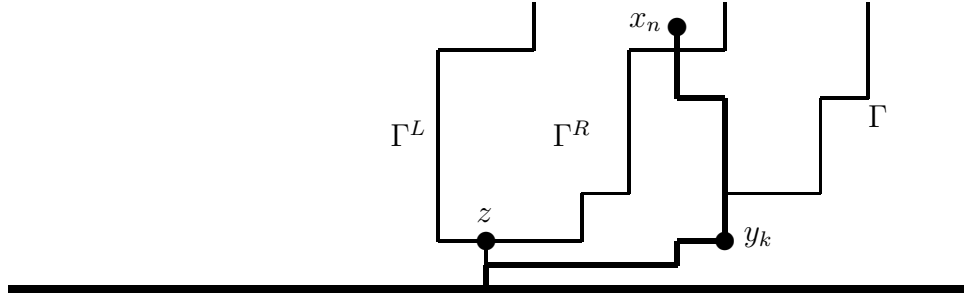

By definition of $\mathcal{X}$, we may find geodesics $\Gamma^L$ and $\Gamma^R$ directed in sectors $\Theta^L \subset (\theta_2, \pi)$ and $\Theta^R \subset (0,\theta_1)$ such that each angle in $\Theta^L \cup \Theta^R$ is at most distance $\epsilon/2$ away from $S$. The paths $\Gamma^L$ and $\Gamma^R$ split at some vertex $z$; let $P$ be the simple path formed by the union of $\Gamma^L$ from $z$ onward and $\Gamma^R$ from $z$ onward. Let $R$ be the component of $\mathbb{R}^2 \setminus P$ which contains all but finitely many of $x_1, x_2, \ldots$. Then there must be a vertex $y_k$ contained in the other component of $\mathbb{R}^2 \setminus P$ denoted $\hat R$. Then we can choose $n$ large enough that $\Gamma(0,x_n)$ contains both $x_n$, a vertex of $R$, and $y_k$, a vertex of $\hat R$. This contradicts uniqueness of passage times.

Now consider the case of $\mathbb{H}$. By definition of $\mathcal{X}$, we may find $\Gamma^L_H$ and $\Gamma_R^H$ starting at $0$, directed in sectors $\Theta^L_H \subseteq (\theta_2, \pi)$ and $\Theta_H^R \subseteq (0, \theta_1)$ such that each angle in $\Theta^L_H \cup \Theta_H^R$ is at most distance $\epsilon/2$ away from $S$. From here, the proof is identical to the preceding case.

\end{proof}


A statement of the same feel as Corollary~\ref{cor: directed_sector} is below. We will need it to ensure that certain subpaths of infinite geodesics stay in the upper half-plane. Specifically, it will control the number of ``backtracks'' that a geodesic can make into the lower half-plane. Let $L_n = L_0 + ne_2 = \{(x,n) : x \in \mathbb{Z}\}$.

\begin{cor}\label{cor: out_tree}
There exists $\epsilon > 0$ such that the following holds with probability one for each $k \geq 0$. For all large $n$,
\[
\left(\bigcup_{y \in L_\epsilon(n)}\mathrm{out}_0(y)\right) \cap L_k = \emptyset,
\]
where $\mathrm{out}_z(w)$ is the set of vertices $u$ in $\mathbb{Z}^2$ which have $w \in \Gamma(z,u)$ and
\[
L_\epsilon(n) = \{x \in L_n : \mathrm{arg}~x \in [\theta_1-\epsilon, \theta_2+\epsilon]\}.
\]
An identical statement holds replacing $\mathrm{out}_0(w)$ with $\out_0^H(w):= \{u \in \Z^2: \, w \in \Gamma_H(0,u)\}$.
\end{cor}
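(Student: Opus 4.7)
The plan is to argue by contradiction using a topological separation provided by the trapping geodesics from Propositions~\ref{prop: trapping_tools} and~\ref{prop: trapping_tools_hp}. First, I would fix $\epsilon > 0$ small enough that these propositions produce infinite geodesics $\Gamma^L, \Gamma^R$ (and half-plane geodesics $\Gamma^{H,L}, \Gamma^{H,R}$) from $0$ whose directing sectors lie strictly inside $(\theta_2 + 2\epsilon, \pi)$ and $(0, \theta_1 - 2\epsilon)$ respectively. This angular gap supplies the value of $\epsilon$ in the statement.

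Next, suppose toward a contradiction that the conclusion fails for some $k \geq 0$: there exist sequences $n_m \to \infty$, $y_m \in L_\epsilon(n_m)$, and $u_m \in L_k$ with $y_m \in \Gamma(0, u_m)$. If $(u_m)$ is bounded, it takes only finitely many values, and each associated $\Gamma(0, u)$ is a finite path meeting only finitely many $L_n$, contradicting $n_m \to \infty$; hence I may pass to a subsequence with $|u_m| \to \infty$, so that $\mathrm{arg}\, u_m \to 0$ or $\pi$ since $u_m$ has bounded second coordinate.

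The key geometric step is to observe that $P = \Gamma^L \cup \Gamma^R$ (these share only the vertex $0$, by uniqueness of passage times) is a simple infinite curve separating $\mathbb{R}^2$ into two components: a middle region $M$ that eventually contains every point of large norm and argument in $(\theta_1, \theta_2)$, and an outer region $O$. For $m$ large, the angular location of $y_m$ forces $y_m \in M$, while the angular location of $u_m$ forces $u_m \in O$. Consequently the subpath $\Gamma(y_m, u_m) \subset \Gamma(0, u_m)$ must share a vertex $w_m$ with $P$; say $w_m \in \Gamma^R$ (the other case is symmetric). Then $\Gamma(0, w_m)$ is both the initial segment of $\Gamma(0, u_m)$, which passes through $y_m$, and, by uniqueness of passage times, the initial segment of $\Gamma^R$ up to $w_m$. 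This places $y_m$ on $\Gamma^R$, contradicting the fact that all but finitely many vertices of $\Gamma^R$ have argument in $(0, \theta_1 - \epsilon)$, disjoint from $[\theta_1 - \epsilon, \theta_2 + \epsilon]$.

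The half-plane version is carried out identically using $\Gamma^{H,L}$ and $\Gamma^{H,R}$, whose union together with the boundary line $L_0$ partitions $\mathbb{H}$ into a middle region and two lateral regions, placing $y_m$ in the middle one and $u_m$ in one of the lateral ones. The main obstacle I anticipate is the rigorous identification of the middle region $M$: I need the assertion that $M$ eventually contains every point whose argument lies in $[\theta_1 - \epsilon, \theta_2 + \epsilon]$ and whose norm is sufficiently large. This relies only on the asymptotic directedness of $\Gamma^L$ and $\Gamma^R$ together with planarity, rather than any pointwise angular control, and is the only step in the plan that requires genuine care.
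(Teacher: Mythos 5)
Your proposal is sound in outline but follows a genuinely different route from the paper. The paper restricts attention to the half-plane $\{y \geq k\}$ above $L_k$, takes the terminal segments of $\Gamma^L,\Gamma^R$ past their last intersections with $L_k$, and obtains a \emph{three}-region decomposition of that half-plane whose boundary includes a bounded segment $P$ of $L_k$; the essential extra ingredient is that the finitely many geodesics from $0$ to vertices of $P$ are confined below some random height $N$, so that a geodesic reaching $y \in L_\epsilon(n)$ with $n > N$ can never exit through $P$. Your argument instead decomposes the \emph{full} plane by the doubly-infinite curve formed from $\Gamma^L$ and $\Gamma^R$, disposes of bounded $(u_m)$ by finiteness of $\Gamma(0,u)$, and converts a crossing of that curve by $\Gamma(y_m,u_m)$ into a uniqueness contradiction via directedness of $\Gamma^R$ (since $\Gamma(0,w_m)$ must simultaneously be a subpath of $\Gamma^R$ and pass through $y_m$). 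That last step is a clean replacement for the paper's height-bound argument and avoids the segment $P$ of $L_k$ altogether. Two things need repair: (i) $\Gamma^L$ and $\Gamma^R$ need not meet only at $0$ — they may share an initial segment up to a last common vertex $p$, and the separating curve must be taken to be the two terminal segments from $p$ (exactly the move made in the proof of Lemma~\ref{lem:v_finite}); after this fix $0$ may not lie on the curve, but your argument does not actually need it to. (ii) You rightly flag the identification of $M$ as the delicate step, but note you also need its companion claim that $u_m$ with $|u_m|\to\infty$ along $L_k$ lies in $O$; both follow because $\Gamma^L,\Gamma^R$ are directed in sectors bounded away from $\{0,\pi\}$ and from $[\theta_1-\epsilon,\theta_2+\epsilon]$, so a vertical ray up from $y_m$ and a horizontal ray out from $u_m$ eventually miss the curve while a far-out middle-sector point and a far-out point of $L_k$ cannot be joined without crossing it. Making this rigorous (e.g.\ via one-point compactification) is genuine work, and is precisely the work the paper's shift to the half-plane above $L_k$ is designed to avoid.
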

\begin{proof}
See Figure~\ref{fig: fig_3} for an illustration of the following argument. First consider $\Z^2$. From definition of $\mathcal{X}$, choose $\epsilon > 0$ such that there are sectors $S^L \subset (\theta_2+\epsilon, \pi)$ and $S^R \subset (0,\theta_1-\epsilon)$ such that with probability one, there are infinite geodesics $\Gamma^R$ and $\Gamma^L$ starting at $0$ and directed in $S^R$ and $S^L$ respectively. $\Gamma^R$ has a last intersection $x^R$ with $L_k$ and $\Gamma^L$ has a last intersection $x^L$ with $L_k$. Note that the portions $\Gamma_1$ of $\Gamma^L$ and $\Gamma_2$ of $\Gamma^R$ from $x^L$ and $x^R$ onward lie on or above $L_k$ and $x^L$ is not strictly to the right of $x^R$. Then $\Gamma_1$ and $\Gamma_2$ split this shifted half-plane into three regions: $R_1$ to the left of $\Gamma_1$, $R_2$ between $\Gamma_1$ and $\Gamma_2$, and $R_3$ to the right of $\Gamma_3$.

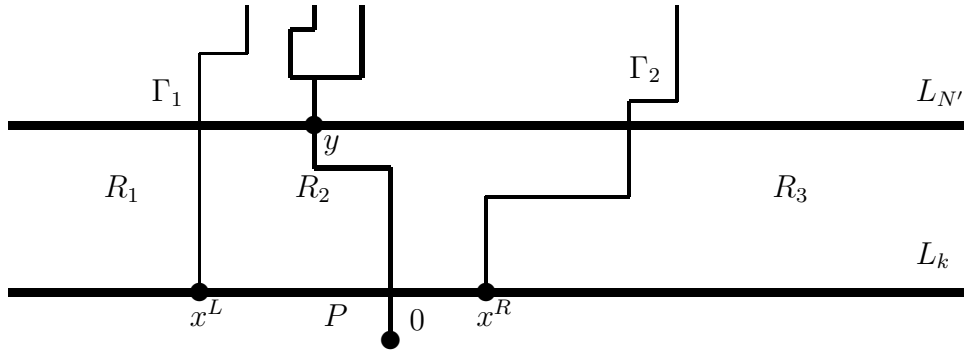
\begin{figure}[h]
\setlength{\unitlength}{.5in}
\begin{picture}(10,3.5)(-5.5,2)
\linethickness{3pt}
\put(-4,2.5){\line(1,0){10}}
\linethickness{3pt}
\put(-4,4.25){\line(1,0){10}}
\linethickness{1pt}
\put(-3,3.5){$R_1$}
\put(-1,3.5){$R_2$}
\put(4,3.5){$R_3$}
\put(-2.5,4.5){$\Gamma_1$}
\put(2.5,4.75){$\Gamma_2$}
\put(-2.1,2.15){$x^L$}
\put(.9,2.15){$x^R$}
\put(-.7,2.15){$P$}
\put(5.5,2.8){$L_k$}
\put(5.5,4.5){$L_{N'}$}
\put(-.7,4){$y$}
\put(.2,2.1){$0$}
\put(-2,2.5){\circle*{.2}}
\put(1,2.5){\circle*{.2}}
\put(0,2){\circle*{.2}}
\linethickness{1.5pt}
\put(0,2.5){\line(0,-1){.5}}
\linethickness{1pt}
\put(-2,2.5){\line(0,1){2.5}}
\put(-2,5){\line(1,0){.5}}
\put(-1.5,5){\line(0,1){.5}}
\put(1,2.5){\line(0,1){1}}
\put(1,3.5){\line(1,0){1.5}}
\put(2.5,3.5){\line(0,1){1}}
\put(2.5,4.5){\line(1,0){.5}}
\put(3,4.5){\line(0,1){1}}
\linethickness{1.5pt}
\put(-.8,4.25){\circle*{.2}}
\put(-.8,4.25){\line(0,1){.5}}
\put(-.8,4.75){\line(1,0){.5}}
\put(-.8,4.75){\line(-1,0){.25}}
\put(-.3,4.75){\line(0,1){.75}}
\put(-1.05,4.75){\line(0,1){.5}}
\put(-1.05,5.25){\line(1,0){.25}}
\put(-.8,5.25){\line(0,1){.25}}
\put(0,2.5){\line(0,1){1.3}}
\put(0,3.8){\line(-1,0){.8}}
\put(-.8,3.8){\line(0,1){.45}}
\end{picture}
\caption{Depiction of the argument of Corollary~\ref{cor: directed_sector}. The infinite geodesics $\Gamma_1$ and $\Gamma_2$ and directed in sectors $S^L \subset (\theta_2+\epsilon,\pi)$ and $S^R \subset (0,\theta_1-\epsilon)$. Their last intersections with $L_k$ are $x^L$ and $x^R$. The regions composing the complement of $\Gamma_1$ and $\Gamma_2$ in the upper half-plane are $R_1$, $R_2$, and $R_3$. The portion of $L_k$ between $x^L$ and $x^R$ is denoted $P$ (it could be empty). The intersection of $L_{N'}$ with $R_2$ is contained in $L_\epsilon(N')$; that is, the points have angle within $\epsilon$ of $S_\theta$. $N'$ is chosen so large that no geodesic from 0 to a vertex of $P$ intersects $L_{N'}$.}
\label{fig: fig_3}
\end{figure}

The region $R_2$ has as boundary the curves $\Gamma_1$, $\Gamma_2$, and possibly a line segment of $L_k$ called $P$ between $x^L$ and $x^R$. ($P$ might be empty if $x^L = x^R$.) Define $\mathcal{C}$ to be the union of all vertices in any geodesic between $0$ and a vertex of $P$. Note that by uniqueness of passage times, $\mathcal{C}$ is finite, so we can define $N$ to be the maximal $e_2$-coordinate of any vertex in $\mathcal{C}$. Choose a further $N' \geq N$ such that for $n \geq N'$, the set $L_\epsilon(n)$ is contained in $R_2$. Then each $\mathrm{out}_0(y)$ for $y \in L_\epsilon(n)$ must be contained in $R_2$, for if not, a geodesic from 0 must touch $y$ and then intersect either $\Gamma_1$ or $\Gamma_2$ (and contradict uniqueness of passage times) or $P$, which is impossible since $n \geq N'$.

Now consider $\mathbb{H}$. $\mathcal{X}$ again gives the existence of geodesics $\Gamma_H^L, \, \Gamma_H^R$ starting at $0$ directed in sectors $\Theta^L \subseteq (\theta_2, \pi)$ and $\Theta^R \subseteq (0, \theta_1)$. As in the case of $\Z^2$ above, the union of the segments of $\Gamma^L_H$ and $\Gamma_H^R$ after their last intersection forms a simple path which splits the shifted half-plane into three disjoint regions (note that in this case we do not need to use a segment of the $e_1$-axis as part of the curve). The remainder of the argument proceeds identically.
\end{proof}


\subsection{Definition of ordering}

We continue considering only $\omega \in \mathcal{X}$, defined in \eqref{eq: X_def}. Let $\mathcal{G}(x)$ (respectively $\mathcal{G}_H(x)$) be the set of infinite geodesics from $x \in \mathbb{Z}^2$ (respectively half-plane infinite geodesics from $x \in V_H$) that are directed in $S$. We define an ordering on both 
\[
\mathcal{G} := \bigcup_{x \in \mathbb{Z}^2} \mathcal{G}(x) \text{ and } \mathcal{G}_H := \bigcup_{x \in V_H} \mathcal{G}_H(x)
\]
as follows. Two infinite geodesics $\Gamma, \Gamma'$ directed in $S$ are said to be ordered as $\Gamma \prec \Gamma'$ if $\Gamma'$ is ``asymptotically to the left'' of $\Gamma$. That is, $\Gamma \prec \Gamma'$ if for all large $n$, the left-most intersection of $\Gamma'$ with $L_n$ occurs to the left of or at the left-most intersection of $\Gamma$ with $L_n$.
\begin{lem}
\label{lem:totalorderedpizza}
The relation $\prec$ defines a total ordering on each of $\mathcal{G}$ and $\mathcal{G}_H$. That is, the following statements hold for $\Gamma, \Gamma',$ and $\Gamma''$ infinite geodesics directed in $S$.
\begin{enumerate}
\item $\Gamma \prec \Gamma'$ or $\Gamma' \prec \Gamma$,
\item If $\Gamma \prec \Gamma'$ and $\Gamma' \prec \Gamma$, then $\Gamma$ and $\Gamma'$ coalesce, and
\item If $\Gamma \prec \Gamma'$ and $\Gamma' \prec \Gamma''$, then $\Gamma \prec \Gamma''$.
\end{enumerate}
\end{lem}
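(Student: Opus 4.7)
I would work on the event $\mathcal{X}$ throughout, exploiting two ingredients: directedness in $S \subset (0,\pi)$, which via Corollaries \ref{cor: directed_sector} and \ref{cor: out_tree} guarantees that every $\Gamma \in \mathcal{G}$ meets each $L_n$ for all $n$ past some threshold with all such intersections lying in a bounded horizontal interval; and the unique-passage-time condition in $\mathcal{X}$, which forces any two finite geodesics with the same endpoints to coincide. My first step would be a structural lemma: for any $\Gamma, \Gamma' \in \mathcal{G}$, either $\Gamma$ and $\Gamma'$ coalesce, or the set of common vertices $\Gamma \cap \Gamma'$ is finite. To prove this, suppose $\Gamma$ and $\Gamma'$ share a vertex $v$ and their futures from $v$ agree on some initial prefix (possibly trivial), diverge, and then re-meet at some $w$; the two sub-paths from the divergence point to $w$ are then distinct finite geodesics with equal passage time, contradicting unique passage times. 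Hence past the shared prefix from $v$ the two futures stay disjoint, so applied to the earliest shared vertex on $\Gamma$ this gives the dichotomy.

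From this, property 3 is immediate from the definition of $\prec$, and property 2 follows because if both $\Gamma \prec \Gamma'$ and $\Gamma' \prec \Gamma$, the leftmost intersections with $L_n$ must coincide for all large $n$, yielding infinitely many shared vertices and hence coalescence by the lemma. For property 1, if $\Gamma$ and $\Gamma'$ coalesce then both $\prec$ trivially hold; otherwise pick $N$ beyond which they are disjoint and the intersections of both paths with each $L_n$ ($n \geq N$) lie in a common bounded horizontal interval. Above $L_N$, the two paths are disjoint simple lattice rays to infinity directed in $S$, and a Jordan-curve argument (completing each path with a distant arc in the far upper half-plane to form a closed curve) singles out one as consistently ``asymptotically left of'' the other for all $n \geq N$, producing a definite sign in the leftmost-intersection comparison with $L_n$. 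The argument for $\mathcal{G}_H$ is verbatim, using Proposition \ref{prop: trapping_tools_hp} in place of Proposition \ref{prop: trapping_tools}.

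The main obstacle will be the planar left/right comparison in the finite-intersection case: since each geodesic can cross $L_n$ multiple times, one cannot directly extract the sign of the leftmost-crossing comparison from a naive planar non-crossing statement about the full paths. This is resolved by restricting to $n$ large enough that all relevant intersections of both paths with $L_n$ lie in a common bounded horizontal window, provided by the trapping sectors of $\mathcal{X}$ and Corollary \ref{cor: out_tree}, which reduces the question to a finite planar problem where the Jordan curve theorem applies cleanly to two disjoint simple rays.
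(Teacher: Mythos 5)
Your proposal is correct and follows essentially the same route as the paper: both arguments hinge on the dichotomy (forced by unique passage times) that two directed geodesics either coalesce or share only finitely many vertices, and both then use directedness in $S$ to push the non-coalescing tails into an upper half-space where a planarity/Jordan-curve argument pins down a definite sign for the leftmost-crossing comparison. The paper's version of the topological step is slightly more concrete (it builds the barrier path from a segment of $\Gamma'$ together with the leftward ray of $L_n$, rather than closing both paths with a distant arc, and it only needs the tails to lie above $L_N$, not the bounded-window control you invoke via Corollary~\ref{cor: out_tree}), but the key mechanism --- that the trapped geodesic must cross $L_n$ strictly to one side because it cannot touch $\Gamma'$ --- is the same.
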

Strictly speaking, these conditions define a total order on equivalence classes of geodesics, where two geodesics are identified if they coalesce.
\begin{proof}
Statement 3 is obvious, so we prove only 1 and 2. It suffices to prove that if $\Gamma$ and $\Gamma'$ do not coalelsce, then $\Gamma \prec \Gamma'$ or $\Gamma' \prec \Gamma$ (but not both). By uniqueness of passage times, the fact that $\Gamma$ and $\Gamma'$ do not coalesce implies that they intersect only finitely often. Since they are directed in $S$, we can find $N$ such that the following hold: (a) the last intersection $x$ of $\Gamma$ with $L_N$ is (say) strictly to the left of the last intersection $x'$ of $\Gamma'$ with $L_N$ and (b) the portion of $\Gamma$ from $x$ onward and the portion of $\Gamma'$ from $x'$ onward do not intersect and they lie in the half-space $\cup_{n \geq N} L_n$.

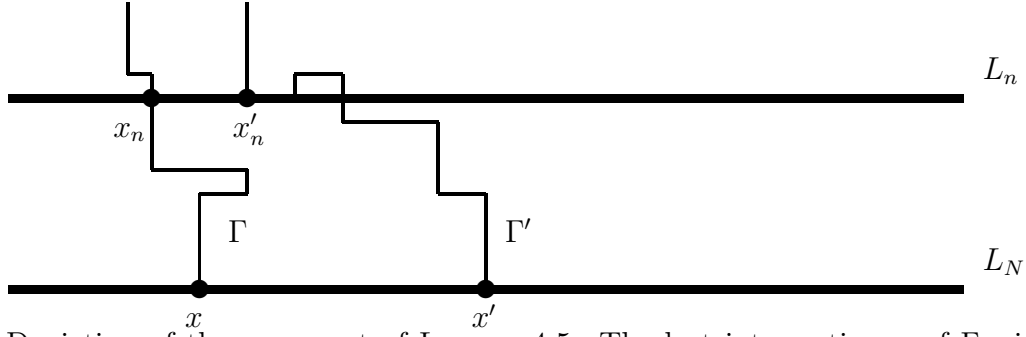
\begin{figure}[h]
\setlength{\unitlength}{.5in}
\begin{picture}(10,3)(-5.5,2.5)
\linethickness{3pt}
\put(-4,2.5){\line(1,0){10}}
\put(6.2,2.7){$L_{N}$}
\put(-4,4.5){\line(1,0){10}}
\put(6.2,4.7){$L_n$}
\linethickness{1pt}
\put(1,2.5){\circle*{.2}}
\put(.85,2.1){$x'$}
\put(1,2.5){\line(0,1){1}}
\put(1.2,3){$\Gamma'$}
\put(1,3.5){\line(-1,0){.5}}
\put(.5,3.5){\line(0,1){.75}}
\put(.5,4.25){\line(-1,0){1}}
\put(-.5,4.25){\line(0,1){.5}}
\put(-.5,4.75){\line(-1,0){.5}}
\put(-1,4.75){\line(0,-1){.25}}
\put(-1,4.5){\line(-1,0){.5}}
\put(-1.5,4.5){\line(0,1){1}}
\put(-1.5,4.5){\circle*{.2}}
\put(-1.65,4.1){$x_n'$}
\put(-2,2.5){\circle*{.2}}
\put(-2.15,2.1){$x$}
\put(-2,2.5){\line(0,1){1}}
\put(-1.7,3){$\Gamma$}
\put(-2,3.5){\line(1,0){.5}}
\put(-1.5,3.5){\line(0,1){.25}}
\put(-1.5,3.75){\line(-1,0){1}}
\put(-2.5,3.75){\line(0,1){1}}
\put(-2.5,4.5){\circle*{.2}}
\put(-2.9,4.1){$x_n$}
\put(-2.5,4.75){\line(-1,0){.25}}
\put(-2.75,4.75){\line(0,1){.75}}
\end{picture}
\caption{Depiction of the argument of Lemma~\ref{lem:totalorderedpizza}. The last intersection $x$ of $\Gamma$ with $L_{N}$ is strictly to the left of the last intersection $x'$ of $\Gamma'$ with $L_{N}$. The path $P'$ consists of the portion of $\Gamma'$ from $x'$ to $x'_n$ and the part of $L_n$ to the left of $x_n'$. The path $\Gamma$ from $x$ onward must cross $P$ and must do so to the left of $x_n'$.}
\label{fig: fig_4}
\end{figure}

See Figure~\ref{fig: fig_4} for an illustration of the following argument. For $n \geq N$, let $x_n$ (respectively $x_n'$) be the left-most intersection of $\Gamma$ (respectively $\Gamma'$) with the line $L_n$. Choose $N' \geq N$ such that for $n \geq N'$, the portions of $\Gamma$ from $x$ to $x_n$ and $\Gamma'$ from $x'$ to $x_n'$ lie in $\cup_{m > N} L_m$ except for the initial point. We claim that for $n \geq N'$, $x_n$ is strictly to the left of $x_n'$. To see why, let $P'$ be the path consisting of the union of the portion of $\Gamma'$ from $x'$ to $x_n'$ with the portion of $L_n$ to the left of $x_n'$. Now the portion of $\Gamma$ from $x+(1/2)e_2$ onward begins in the region between $P'$ and the $L_N$, but it must reach infinity in the sector $S$. Therefore it must cross $P'$, but it cannot touch $\Gamma'$, so it intersects $L_n$ strictly to the left of $x_n'$. Since this is true for all $n \geq N'$, $\Gamma \prec \Gamma'$.
\end{proof}

\begin{remark}\label{rem: first_intersection}
An equivalent characterization of the ordering is as follows. One has $\Gamma \prec \Gamma'$ if and only if for all large $n$, the first intersection of $\Gamma'$ with $L_n$ is to the left or at the first intersection of $\Gamma$ with $L_n$. The proof is similar to the above.
\end{remark}

\begin{remark}\label{rem: cannot_remove}
A similar proof also shows the following, which we will use later. Let $\Gamma$ and $\Gamma'$ be infinite geodesics in $\mathbb{Z}^2$ that are directed in $S$ and start at the same vertex in $L_0$. Suppose that for some $1 \leq n<n'$, the following hold:
\begin{enumerate}
\item the left-most intersection of $\Gamma$ with $L_n$ is strictly to the left of the left-most intersection of $\Gamma'$ with $L_n$ and
\item the left-most intersection of $\Gamma'$ with $L_{n'}$ is strictly to the left of the left-most intersection of $\Gamma$ with $L_{n'}$.
\end{enumerate}
Then one of the following two must be true: (a) the portion of $\Gamma$ from its left-most intersection with $L_n$ to its left-most intersection with $L_{n'}$ touches $L_0$ or (b) the portion of $\Gamma'$ from its left-most intersection with $L_n$ to its left-most intersection with $L_{n'}$ touches $L_0$.
\end{remark}

\begin{remark}\label{rem: opposite_order}
If one defines an ordering $\prec_R$ on geodesics directed in $S$ by $\Gamma \prec_R \Gamma'$ if for all large $n$, the right-most intersection of $\Gamma$ with $L_n$ is equal or to the right of the right-most intersection of $\Gamma'$ with $L_n$, then $\Gamma \prec_R \Gamma'$ if and only if $\Gamma' \prec \Gamma$.
\end{remark}

The claim in Remark~\ref{rem: opposite_order} is proved as follows. Suppose that $\Gamma \prec_R \Gamma'$ but that $\Gamma \neq \Gamma'$. By replacing these paths with the portions from the last intersection with $L_0$, we may assume they lie entirely in the upper half-plane. Now choose $n$ large so that the right-most intersection $w_n^R$ of $\Gamma$ with $L_n$ is strictly to the right of the right-most intersection of $\Gamma'$ with $L_n$. If the left-most intersection $w_n^L$ of $\Gamma$ with $L_n$ is strictly to the left of the left-most intersection of $\Gamma'$ with $L_n$, then we build a curve $P$ by concatenating the portion of $L_n$ to the left of $w_n^L$ with the portion of $\Gamma$ from $w_n^L$ to $w_n^R$ and the portion of $L_n$ to the right of $w_n^R$. Then $P$ does not intersect $\Gamma'$ but separates the $e_1$-axis from infinity in the upper half-plane, and this is a contradiction.

\subsection{Existence of extremal geodesics}

Again, we work with $\omega$ in $\mathcal{X}$ from \eqref{eq: X_def}. For $n \geq 0$, let $V_x(n)$ (respectively $V_{x,H}(n)$) be the set of vertices in $L_n$ which are also in some element of $\mathcal{G}(x)$ (respectively $\mathcal{G}_H(x)$). Let $v_x^L(n)$ and $v_x^R(n)$ (respectively $v_{x,H}^L(n)$ and $v_{x,H}^R(n)$) be the left-most and right-most points  of $V_x(n)$ (respectively $V_{x,H}(n)$).

\begin{lem}
\label{lem:v_finite}
The sets $(V_0(n): n \geq 1)$ are almost surely finite. Moreover, any sequence $(x_n)$ with $x_n \in V_0(n)$ is directed in $S$.
\end{lem}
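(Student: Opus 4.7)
The plan is to confine every $\Gamma \in \mathcal{G}(0)$ inside a ``Y''-shaped region cut out by a pair of trapping geodesics from Proposition~\ref{prop: trapping_tools}, and to deduce both claims from this confinement. Work on outcomes in $\mathcal{X}$. For each $k \geq 1$, let $\Gamma_k^L, \Gamma_k^R$ be the infinite geodesics from $0$ provided by that proposition, directed in $\Theta_k^L \subset (\theta_2, \pi)$ and $\Theta_k^R \subset (0, \theta_1)$ respectively. Uniqueness of passage times (part of the definition of $\mathcal{X}$) forces any two geodesics from $0$ to share a (possibly trivial) common initial prefix and then never meet again, so $\Gamma_k^L \cup \Gamma_k^R$ is a ``Y''-shaped tree partitioning $\mathbb{R}^2$ into exactly two connected components; let $R_k$ denote the component whose asymptotic rays have arguments close to $S$. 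The central step is to prove $\Gamma \subset \Gamma_k^L \cup \Gamma_k^R \cup R_k$ for every $\Gamma \in \mathcal{G}(0)$: by uniqueness, $\Gamma \cap \Gamma_k^L$ and $\Gamma \cap \Gamma_k^R$ are each initial prefixes of $\Gamma$ ending at vertices $z^L, z^R$ (nested along $\Gamma$), after the later of which $\Gamma$ never again touches $\Gamma_k^L \cup \Gamma_k^R$ and therefore lies in a single component of $\mathbb{R}^2 \setminus (\Gamma_k^L \cup \Gamma_k^R)$; asymptotic directedness in $S$, which sits strictly between the limiting directions of $\Gamma_k^R$ and $\Gamma_k^L$, forces that component to be $R_k$.

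Finiteness of $V_0(n)$ is then immediate. Each of $\Gamma_k^L \cap L_n$ and $\Gamma_k^R \cap L_n$ is finite, since both paths are directed in sectors bounded away from $\{0,\pi\}$ and so have $e_2$-coordinate tending to $+\infty$, crossing $L_n$ only finitely often. The set $R_k \cap L_n$ is finite because $L_n$ exits $R_k$ at both horizontal ends: the far-right points of $L_n$ have $\arg \to 0 < \inf \Theta_k^R$ and the far-left have $\arg \to \pi > \sup \Theta_k^L$, both outside the angular region $R_k$ occupies near infinity. Therefore $V_0(n) \subset (\Gamma_k^L \cap L_n) \cup (\Gamma_k^R \cap L_n) \cup (R_k \cap L_n)$ is finite.

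For the direction claim, $\|x_n\|_1 \geq n \to \infty$ holds automatically since $x_n \in L_n$. Given $\epsilon > 0$, the convergences $\sup \Theta_k^L \to \theta_2$ and $\inf \Theta_k^R \to \theta_1$ from Proposition~\ref{prop: trapping_tools} let me choose $k$ with $\sup \Theta_k^L < \theta_2 + \epsilon/2$ and $\inf \Theta_k^R > \theta_1 - \epsilon/2$. Since a vertex at height $n$ on either of $\Gamma_k^L, \Gamma_k^R$ has path index at least $n$, directedness of the two trapping geodesics implies that for all sufficiently large $n$, every element of $(\Gamma_k^L \cup \Gamma_k^R) \cap L_n$ has $\arg$ within $\epsilon$ of $\{\theta_1, \theta_2\}$; the sandwiched set $R_k \cap L_n$ then has $\arg \in (\theta_1 - \epsilon, \theta_2 + \epsilon)$. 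Combined with the trapping inclusion above, $\arg x_n \in S + (-\epsilon, \epsilon)$ for all large $n$, and sending $\epsilon \to 0$ places all limit points of $\arg x_n$ inside $S$.

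The principal obstacle is the planarity argument underlying the trapping step---verifying that $\mathbb{R}^2 \setminus (\Gamma_k^L \cup \Gamma_k^R)$ has exactly two components and that $\Gamma$ cannot re-cross the ``Y'' after its last common vertex with each arm. Both rest on uniqueness of passage times (forbidding two distinct geodesics from $0$ from meeting again after they split) combined with planarity of $\mathbb{Z}^2$, and they require careful accounting of the shared initial prefixes---in particular, when $\Gamma$ shares a nontrivial prefix with both $\Gamma_k^L$ and $\Gamma_k^R$ simultaneously, so that a portion of $\Gamma$ is literally on the boundary of $R_k$ before $\Gamma$ enters the interior.
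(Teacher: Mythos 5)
Your overall strategy is the same as the paper's: trap every $\Gamma \in \mathcal{G}(0)$ between a pair of geodesics from Proposition~\ref{prop: trapping_tools} and read off both claims from the confinement. However, the two statements you treat as ``immediate'' are exactly where the real work is, and as written they beg the question.

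The first is the assertion that the tail of $\Gamma$, once it stops touching $\Gamma_k^L \cup \Gamma_k^R$, must lie in $R_k$ because it is directed in $S$ and $S$ sits between the trapping sectors. The second is the claim ``$R_k \cap L_n$ is finite because $L_n$ exits $R_k$ at both horizontal ends,'' justified only by observing that the far ends of $L_n$ have arguments tending to $0$ and $\pi$. Both of these presuppose that a point with argument near $\pi$ (or $0$) and large norm, and a point with argument near $\theta$ and large norm, cannot belong to the same component of $\mathbb{R}^2 \setminus (\Gamma_k^L \cup \Gamma_k^R)$. That is not an immediate consequence of the limiting arguments of $\Gamma_k^L$ and $\Gamma_k^R$: a connected open set in the plane can send rays to infinity in several widely separated angular directions, and ``directed in a sector'' only constrains limit points of arguments, so the trapping geodesics can wander substantially at finite scales. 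You flag the ``planarity argument'' as the principal obstacle, but you identify it with the relatively easy facts that the complement of the Y has two components and that $\Gamma$ cannot recross the Y, rather than with the actual obstruction, which is pinning down which side of the Y a given far-away point sits on.

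The paper closes exactly this hole and does so without ever needing to characterize $R_k \cap L_n$. It argues by contradiction: given $z \in V_0(n)$ outside the interval $[x,y]$ of $L_n$ spanned by the Y, it truncates the two-ended curve $P$ at the boundary of a large box $[-K,K]^2$ and closes it along that boundary to form a genuine Jordan curve $P'$. Using the differentiability-derived fact that $P$ eventually lives in a cone bounded away from $\{0,\pi\}$, it shows that a far vertex $z_\ell$ of the geodesic (with argument near $S$) lies in the bounded component of $P'$ while $z$ lies in the unbounded component, so the fixed finite segment of $\Gamma$ from $z$ to $z_\ell$ must cross $\partial[-K,K]^2$ for every sufficiently large $K$, which is absurd. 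Your sketch would become a proof once you supply a version of this box-truncation argument both to locate the tail of $\Gamma$ in $R_k$ and to bound $R_k \cap L_n$; as it stands, those are the missing steps, not incidental details.
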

\begin{proof}

We make use of particular geodesics from the definition of $\mathcal{X}$. Find $\varepsilon > 0$, sectors $S^L \subset (\theta_2 + \epsilon, \pi)$ and $S^R \subseteq (0, \theta_1 - \epsilon)$, and infinite geodesics $\Gamma^L, \, \Gamma^R$ starting at $0$, directed in $S^L$ and $S^R$ (respectively). Note that $\Gamma^R$ and $\Gamma^L$ have some last intersection point $p$; let $P$ denote the doubly infinite simple curve formed by concatenating the segments of $\Gamma^R$ and $\Gamma^L$ beginning at $p$.

See Figure~\ref{fig: fig_5} for an illustration of the following argument. Assume for the sake of contradiction that there is some $n \geq 1$ such that $|V_0(n)|$ is infinite. In particular, if we let $x$ (respectively $y$) denote the leftmost (respectively rightmost) vertex of $(\Gamma^L \cup \Gamma^R) \cap L_n$ and $[x,y]$ the segment of $L_n$ between them, then there is some $z \notin [x, y]$ and an infinite geodesic $\Gamma \ni z$ directed in $S$ and starting at $0$. Writing $\Gamma = (0 = z_0, z_1, \ldots)$, note that $\Gamma$ is identical with $\Gamma^L$ ($\Gamma^R$) up to some first branching point $z_{k^L}$ ($z_{k^R}$). Letting $k = \max\{k^L, \, k^R\}$, note that by construction $z$ appears in $\Gamma$ after $z_k$. By uniqueness of geodesics, the segment $\Gamma'$ of $\Gamma$ beginning at $z$ cannot intersect $P$.

\begin{figure}[h]
\setlength{\unitlength}{.5in}
\begin{picture}(10,6)(-5.5,2.5)
\linethickness{3pt}
\put(-4,3.5){\line(1,0){10}}
\put(-4,5.5){\line(1,0){10}}
\put(-4,6.25){\line(1,0){10}}
\linethickness{2pt}
\put(-3,2.5){\line(0,1){5}}
\put(-3,7.5){\line(1,0){8}}
\put(5,7.5){\line(0,-1){5}}
\linethickness{1pt}
\put(0,3.5){\circle*{.2}}
\put(-.4,3.1){$0$}
\put(6.2,3.5){$L_0$}
\put(6.2,5.5){$L_n$}
\put(6.2,6.25){$L_m$}
\put(5,7.8){$\partial [-K,K]^2$}
\put(-.5,5){$\Gamma^L$}
\put(1.6,5){$\Gamma^R$}
\put(1.5,5.5){\circle*{.2}}
\put(1.7,5.7){$y$}
\put(.25,5.5){\circle*{.2}}
\put(0,5.7){$x$}
\put(2.5,5.5){\circle*{.2}}
\put(2.7, 5.7){$z$}
\put(0,4){\circle*{.2}}
\put(-.5,4){$p$}
\put(-.25,7.5){\circle*{.2}}
\put(-.6,7.1){$y'$}
\put(1.25,7.5){\circle*{.2}}
\put(1.45,7.1){$x'$}
\put(.5,6.25){\circle*{.2}}
\put(.1,6.4){$z_\ell$}
\put(0,3.5){\line(0,1){.5}}
\put(0,4){\line(1,0){.5}}
\put(.5,4){\line(0,1){1}}
\put(.5,5){\line(1,0){1}}
\put(1.5,5){\line(0,1){1}}
\put(1.5,6){\line(-1,0){.5}}
\put(1,6){\line(0,-1){.75}}
\put(1,5.25){\line(-1,0){.25}}
\put(.75,5.25){\line(0,1){2}}
\put(.75,7.25){\line(1,0){.5}}
\put(1.25,7.25){\line(0,1){.75}}
\put(0,4){\line(0,1){1}}
\put(0,5){\line(1,0){.25}}
\put(.25,5){\line(0,1){1}}
\put(.25,6){\line(-1,0){.5}}
\put(-.25,6){\line(0,1){2}}
\put(2.5,5.5){\line(0,1){1}}
\put(2.5,6.5){\line(1,0){.5}}
\put(3,6.5){\line(0,1){.75}}
\put(3,7.25){\line(-1,0){.5}}
\put(2.5,7.25){\line(0,1){.75}}
\put(3.2,7){$\Gamma'$}
\end{picture}
\caption{Depiction of the argument of Lemma~\ref{lem:v_finite}. The infinite geodesics $\Gamma^L$ and $\Gamma^R$ start at 0, split at $p$, and are directed in $S^L$ and $S^R$. The left and rightmost intersections of their union with $L_n$ are $x$ and $y$. $z$ is a vertex of $V_0(n)$ outside of the interval $[x,y]$, and $\Gamma'$ is the portion from $z$ of and infinite geodesic $\Gamma$ starting from $0$ that contains $z$. $\Gamma'$ must touch the vertex $z_\ell$, which is in the region between $\Gamma^L$ and $\Gamma^R$, but since it cannot cross the other geodesics, it must leave the box before doing so.}
\label{fig: fig_5}
\end{figure}
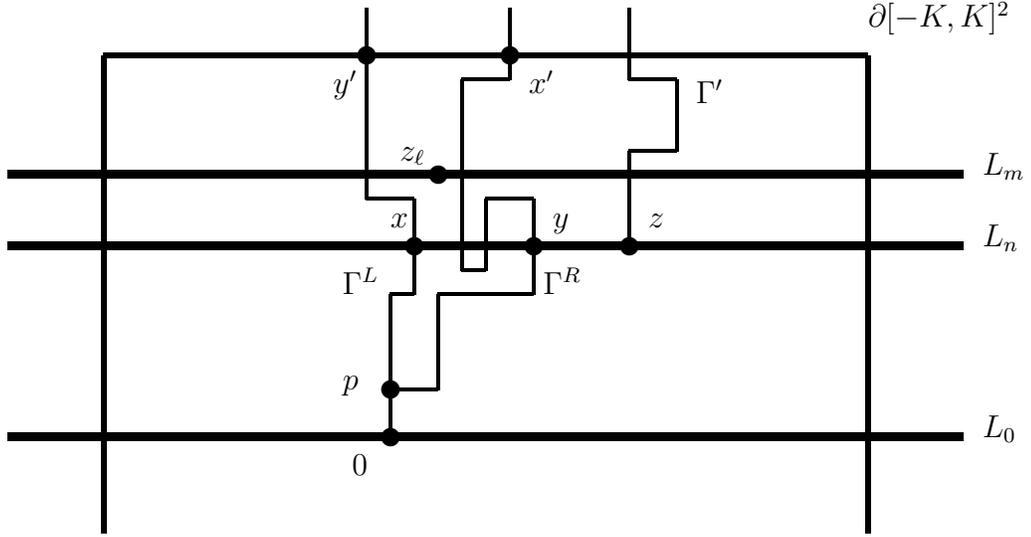

Choose $M > \|p\|_\infty + \|z\|_\infty$ such that, for all $m > M$, each $x^L \in \Gamma^L$ (respectively $x^R \in \Gamma^R$) lying on the boundary $\partial [-m, m]^2$ of $[-m, m]^2$ has $\arg (x^L) \in (\theta_2 + \epsilon, \pi)$ (respectively $\arg (x^R) \in (0, \theta_1 - \epsilon)$). In particular, for all $m > M$ and all $x \in L_m$ with $\arg x \in (\theta_1 - \epsilon / 2, \theta_2 + \epsilon/2)$, the ray
$\{x + \alpha e_2: \, \alpha \geq 0\}$ is disjoint from $P$.

Since $\Gamma'$ is directed in $S$, there is some $m > M$ and $z_\ell \in L_m \cap \Gamma'$ with $\arg z_\ell \in (\theta_1 - \epsilon / 2, \theta_2 + \epsilon/2)$. We will see that this is a contradiction, as $\Gamma'$ is blocked from reaching $z_\ell$. This will follow once we show that for all $K > M$, the path $\Gamma'$ exits $[-K, K]^2$ before reaching $z_\ell$.

Fix such $K$ and construct a Jordan $P'$ from $P$ as follows. Let $x', \, y'$ denote the first intersections of the two ends of the path $P$ with $\partial [-K, K]^2$, where $\arg x' \in (0, \theta_1 - \epsilon)$ and $\arg y' \in (\theta_2 + \epsilon, \pi)$. Then $P'$ is defined as the concatenation of the segment of $P$ from  $x'$ to $y'$ with the clockwise path along $\partial [-K, K]^2$ from $y'$ to $x'$. $P'$ divides $\mathbb{R}^2$ into two components. By the choice of $M$, $z_\ell$ lies in the bounded component, as the ray $\{z_\ell + \alpha e_2:\, \alpha \geq 0\}$ crosses $P'$ exactly once along the top side of $[-K,K]^2$.

On the other hand, $z$ lies in the unbounded component, since $\{z + \alpha (z-x): \, \alpha \geq 0\}$ does not intersect $P'$. In particular, since it connects vertices in distinct components, $\Gamma'$ must intersect $P'$ before $z_\ell$.  On the other hand, $\Gamma'$ cannot intersect $P$, so this intersection point must happen on $\partial [-K, K]^2$.

The proof of the second claim is similar, so we will only specify the differences in the argument. Assume there exists some $\epsilon > 0$ and a sequence $(x_{n_k})$ with $x_{n_k} \in V_0(n_k)$ and $\arg(x_{n_k}) \notin (\theta_1 - 2\epsilon, \theta_2 + 2\epsilon)$ for all $k$. Using the definition of $\mathcal{X}$ as before, we can find a $\Gamma^R$ asymptotically directed in some sector $S^R \subseteq (\theta_1 - \epsilon, \theta_1)$ and $\Gamma^L$ in $S^L \subseteq (\theta_2, \theta_2+\epsilon)$.

We now choose some large $k$ and a $z \in V_0(n_k)$ lying either to the left or to the right of the intersections of $\Gamma^L,\, \Gamma^R$ with $L_{n_k}$, analogously to the previous case. Some geodesic $\Gamma$ starting at $0$ must intersect $z$, then return to hit infinitely many vertices $y$ with 
\[\arg y \in (\theta_1 - \epsilon, \theta_2 + \epsilon) \setminus (S^R \cup S^L)\ .\]
But by taking $M$ large and considering arbitrary $K > M$, we see as before that $\Gamma$ must exit $[-K,K]^2$ before hitting $z$, a contradiction.

\end{proof}

In the next proposition, we construct the left- and right-most infinite geodesics directed in $S$. Note that, although the geodesics from definition of $\mathcal{X}$ may not be $(t_e)$-measurable, those constructed below are.
\begin{prop}\label{prop: left-most_definition}
Almost surely, for each $x \in \mathbb{Z}^2$, the limits $\Gamma_x^L = \lim_n \Gamma(x,v_x^L(n))$ and $\Gamma_x^R = \lim_n \Gamma(x,v_x^R(n))$ exist and are directed in $S$. These geodesics satisfy the following properties.
\begin{enumerate}
\item $\Gamma_x^L$ is the unique maximal element of $\mathcal{G}(x)$ under the ordering $\prec$. $\Gamma_x^R$ is similarly minimal.
\item For $* = L,R$, $\Gamma_x^*$ contains all but finitely many of $(v_x^*(n) : n \geq 1)$.
\item For $* = L,R$, if $y \in \mathbb{Z}^2$ is in $\Gamma_x^*$, then $\Gamma_y^*$ is the portion of $\Gamma_x^*$ beginning at $y$.
\item $\Gamma_x^L$ can be characterized as follows. Let $(v_k)$ be any sequence tending to infinity and directed in $S$ such that $v_k \in L_{n_k}$ and $v_k$ is to the left or at $v_0^L(n_k)$. Then $\lim_k \Gamma(0,v_k) = \Gamma_0^L$. A similar statement holds for $\Gamma_x^R$.
\end{enumerate}

Similar statements hold for half-plane limits and the limiting geodesics.
\end{prop}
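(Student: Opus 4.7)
The plan is to construct $\Gamma_x^L$ as a limit of the finite geodesics $\Gamma(x, v_x^L(n))$ via a compactness-plus-uniqueness argument, and then read off the four listed properties from the definition of $v_x^L$ together with the total ordering of Lemma~\ref{lem:totalorderedpizza}. The key inputs will be Lemma~\ref{lem:v_finite} (which guarantees $v_x^L(n)$ is well-defined and that any sequence of such vertices is directed in $S$), Corollary~\ref{cor: directed_sector} (propagation of directedness through subsequential limits of finite geodesics), uniqueness of passage times, and the ordering $\prec$. The half-plane case will be essentially identical using Proposition~\ref{prop: trapping_tools_hp} and the $\mathbb{H}$-analogues of these ingredients.

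To show existence of the limit, I will first extract a subsequential limit $\tilde{\Gamma}$ of $\Gamma(x, v_x^L(n))$ by diagonalization, which is possible because each geodesic step offers only finitely many choices and the lengths of $\Gamma(x,v_x^L(n))$ tend to infinity. Corollary~\ref{cor: directed_sector} then places $\tilde{\Gamma}$ in $\mathcal{G}(x)$. Along the defining subsequence $(n_j)$, $\tilde{\Gamma}$ passes through $v_x^L(n_j)$ for all large $j$, so its leftmost intersection with $L_{n_j}$ is at most $v_x^L(n_j)$; combined with the definitional lower bound $\geq v_x^L(n_j)$, equality holds. If $\tilde{\Gamma}'$ is a second subsequential limit, Lemma~\ref{lem:totalorderedpizza} makes it comparable to $\tilde{\Gamma}$ under $\prec$, but strict inequality would imply that for large $n$ in one of the defining subsequences, the leftmost $L_n$-intersection of one of them is strictly less than $v_x^L(n)$, contradicting the definition of $v_x^L$. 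Hence $\tilde{\Gamma}$ and $\tilde{\Gamma}'$ coalesce and, sharing the starting vertex $x$, must be equal; so the full limit $\Gamma_x^L$ exists and contains $v_x^L(n)$ for all large $n$ (property 2). Maximality (property 1) is then immediate: any $\Gamma'' \in \mathcal{G}(x)$ has leftmost $L_n$-intersection $\geq v_x^L(n)$, which is attained by $\Gamma_x^L$, and uniqueness of the maximum follows from the total ordering. Property 4 will follow by the same template: any subsequential limit of $\Gamma(0, v_k)$ lies in $\mathcal{G}(0)$, eventually passes through $v_{k_j} \leq v_0^L(n_{k_j})$, hence matches $v_0^L$ on $L_{n_{k_j}}$, and must then coalesce with $\Gamma_0^L$.

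The delicate step will be property 3. If $y \in \Gamma_x^L$, the tail of $\Gamma_x^L$ from $y$ lies in $\mathcal{G}(y)$ and witnesses $v_y^L(n) \leq v_x^L(n)$ for large $n$. The reverse inequality is where I expect to have to work: the naive attempt to concatenate $\Gamma(x,y)$ with a leftmost geodesic from $y$ need not produce a geodesic, because $T(x,z)$ may be strictly smaller than $T(x,y) + T(y,z)$. Instead, I will argue by contradiction: if $v_y^L(n_j) < v_x^L(n_j)$ along a subsequence, I extract a subsequential limit of $\Gamma(x, v_y^L(n_j))$, which by Corollary~\ref{cor: directed_sector} (applied to the $S$-directed sequence $(v_y^L(n))$ from Lemma~\ref{lem:v_finite}) lies in $\mathcal{G}(x)$ and eventually passes through $v_y^L(n_j)$, producing an element of $\mathcal{G}(x)$ whose leftmost $L_{n_j}$-intersection is strictly less than $v_x^L(n_j)$, contradicting the definition of $v_x^L$. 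The resulting equality $v_y^L(n) = v_x^L(n)$ for large $n$ then identifies $\Gamma_y^L = \lim_n \Gamma(y, v_x^L(n))$ with the tail of $\Gamma_x^L$ from $y$ via uniqueness of geodesics. The right-most statements will follow by replacing $\prec$ with $\prec_R$ (Remark~\ref{rem: opposite_order}) and repeating, and the half-plane statements are handled identically using Proposition~\ref{prop: trapping_tools_hp} and the $\mathbb{H}$-analogues of Lemma~\ref{lem:v_finite} and Corollary~\ref{cor: directed_sector}.
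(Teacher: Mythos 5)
Your high-level structure matches the paper's (subsequential limits, directedness via Corollary~\ref{cor: directed_sector}, Lemma~\ref{lem:v_finite}, the ordering $\prec$), but you gloss over the central difficulty of the proposition, and the resulting gap propagates through existence and all four properties.

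The gap is in the claim ``Along the defining subsequence $(n_j)$, $\tilde{\Gamma}$ passes through $v_x^L(n_j)$ for all large $j$.'' The convergence $\Gamma(x, v_x^L(n_j)) \to \tilde{\Gamma}$ is local: for each $k$, the first $k$ steps of $\Gamma(x, v_x^L(n_j))$ eventually agree with those of $\tilde{\Gamma}$. As $j$ grows, $v_x^L(n_j)$ recedes to infinity, so for any fixed $k$, $v_x^L(n_j)$ is eventually far beyond step $k$. There is nothing in the definition of the limit that forces the infinite path $\tilde{\Gamma}$ to hit the endpoints of the finite geodesics; $\Gamma(x, v_x^L(n_j))$ may agree with $\tilde{\Gamma}$ initially and then branch off to reach $v_x^L(n_j)$ by a route $\tilde{\Gamma}$ never takes. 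You make the same unjustified leap in property~3 (``lies in $\mathcal{G}(x)$ and eventually passes through $v_y^L(n_j)$'') and property~4 (``eventually passes through $v_{k_j}$''). But ``the limit contains almost all the $v_x^L(n)$'' \emph{is} property~2, and in the paper it is precisely the hardest part of the proof.

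The paper closes this gap with a planar topological argument that your sketch does not contain, and which requires inputs you did not invoke. To show two subsequential limits $\Gamma \prec \Gamma'$ cannot both arise from $\Gamma(0, v_0^L(n_k))$, the paper builds a Jordan curve $P$ from a segment of $\Gamma'$ and pieces of $\partial[-n,n]^2$, then shows a suitable vertex $z$ on $\Gamma$ is trapped in the interior of $P$ while $v_0^L(n_K)$ is in the exterior, so that a finite geodesic agreeing with $\Gamma$ up to $z$ cannot reach $v_0^L(n_K)$. Making this work requires Corollary~\ref{cor: out_tree} (so that geodesics from $0$ through far-out points in $L_\epsilon(n)$ cannot backtrack to $L_0$, which is needed to control where the finite geodesic can cross $P$) and, for property~2, Remark~\ref{rem: cannot_remove} (to rule out that $\Gamma$ and $\Gamma_0^L$ swap sides without dipping back to $L_0$). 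Neither of these appears in your proposal, and without the topological separation argument the inference from ``$\tilde\Gamma$ is a limit of $\Gamma(x, v_x^L(n_j))$'' to ``$\tilde\Gamma$ has leftmost $L_{n_j}$-intersection $\leq v_x^L(n_j)$'' simply does not follow.
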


\begin{proof}
Since the half-plane case is similar to (and easier than) the full-plane case, we concentrate on the latter.

\emph{Existence}.
This style of argument will be used a few times, so we provide the full details here for $\Gamma_x^L$. Without loss of generality, we can assume that $x = 0$. We first show the existence of the limiting geodesic $\Gamma^L_0$. Note that there exists at least one subsequential limiting geodesic $\Gamma = \lim_k \Gamma(0, v_0^L(n_k))$. Moreover, by Lemma \ref{lem:v_finite}, $v_0^L(n_k)$ is directed in $S$, so by Corollary \ref{cor: directed_sector}, $\Gamma$ is directed in $S$. 

Assume that there were another subsequential limit $\Gamma' = \lim_\ell \Gamma(0, v_0^L(m_\ell)) \neq \Gamma$, which must also be directed in $S$. By Lemma \ref{lem:totalorderedpizza}, we may assume without loss of generality that $\Gamma~\prec~\Gamma'$. Fix some $N$ such that, for all $n \geq N$, the following hold.
\begin{enumerate}
\item the leftmost point of $\Gamma' \cap L_n$ is strictly to the left of the leftmost point of $\Gamma \cap L_n$ and, 
by Remark~\ref{rem: first_intersection}, the first intersection of $\Gamma'$ with $L_n$ is strictly to the left of the first intersection of $\Gamma$ with $L_n$. Furthermore, $(\Gamma \cap L_n) \cap (\Gamma' \cap L_n) = \emptyset$.
\item $\out_0(y) \cap L_0 = \emptyset$ for all $y \in L_{\epsilon}(n)$  (by Corollary~\ref{cor: out_tree}, from which the value of $\epsilon<\pi/4$ is taken).
\item $V_0(n) \subseteq L_\epsilon(n)$, so $(\Gamma \cap L_n) \cup (\Gamma' \cap L_n) \subseteq L_\epsilon (n)$.
\item Each of $\Gamma, \, \Gamma'$ only intersect $ \partial [-n, n]^2$ in the top side.
\end{enumerate}

Choose $K$ large enough that if $k \geq K$, then $n_k > N$ and that the geodesic $\Gamma(0, v_0^L(n_k))$ is identical with $\Gamma$ up to the first intersection $z$ of $\Gamma$ with $L_{N+1}$. Last, choose $N' > n_K$ such that if $n \geq N'$, then $\Gamma'$ and $\Gamma$ have their last intersections with $L_{N+1}$ before leaving $[-n, n]^2$. For each $n > N'$, consider the Jordan curve $P$ formed by the segment of $\Gamma'$ from its last intersection $y$ with $L_0$ to its first intersection $y'$ with $L_n$, the segment of the $e_1$ axis from the right side of $y$ to the boundary of $[-n, n]^2$, the segment of $L_n$ from the right of $y'$ to the boundary of $[-n, n]^2$, and the right side of $[-n, n]^2$.

\begin{figure}[h]
\setlength{\unitlength}{.5in}
\begin{picture}(10,6)(-5.5,2.5)
\linethickness{3pt}
\put(-4,3.5){\line(1,0){10}}
\put(-4,5.5){\line(1,0){10}}
\put(-4,6){\line(1,0){10}}
\put(-4,6.5){\line(1,0){10}}
\linethickness{2pt}
\put(-3,2.5){\line(0,1){5}}
\put(-3,7.5){\line(1,0){8}}
\put(5,7.5){\line(0,-1){5}}
\linethickness{1pt}
\put(0,3.5){\circle*{.2}}
\put(2,5.5){\circle*{.2}}
\put(-2,6){\circle*{.2}}
\put(-2.2,5.65){$v_0^L(n_K)$}
\put(2.1,5.2){$z$}
\put(-.4,3.1){$0$}
\put(0,3.5){\line(0,-1){.5}}
\put(0,3){\line(1,0){1}}
\put(1,3){\line(0,1){1}}
\put(1,4){\line(1,0){.25}}
\put(1.25,4){\line(0,-1){.75}}
\put(1.5,3.5){\circle*{.2}}
\put(1.7,3.2){$y$}
\put(1.7,4.2){$\Gamma'$}
\put(1.7, 7.8){$y'$}
\put(6.2,3.5){$L_0$}
\put(6.2,5.5){$L_{N+1}$}
\put(6.2,6){$L_{n_K}$}
\put(6.2,6.5){$L_{N'}$}
\put(5,7.8){$\partial [-n,n]^2$}
\put(1.25,3.25){\line(1,0){.25}}
\put(1.5,3.25){\line(0,1){1.75}}
\put(1.5,5){\line(-1,0){.5}}
\put(1,5){\line(0,1){1.25}}
\put(1,6.25){\line(1,0){.5}}
\put(1.5,6.25){\line(0,1){2}}
\put(1.5,7.5){\circle*{.2}}
\put(2,5.5){\line(0,1){1.5}}
\put(2,7){\line(1,0){.5}}
\put(2.5,7){\line(0,1){1.25}}
\end{picture}
\caption{Illustration of the argument of existence in Proposition~\ref{prop: left-most_definition}. The Jordan curve $P$ consists of the portion of $\Gamma'$ from $y$ to $y'$, which are the last intersection of $\Gamma'$ with $L_0$ and the first with $L_n$, and the portions of $\partial [-n,n]^2$ clockwise from $y'$ to $y$. The point $z$ is the first intersection of $\Gamma$ with $L_{N+1}$ and, following $\Gamma$ from this point, we first intersect $L_n$ strictly to the right of $y'$. The leftmost point $v_0^L(n_K)$ of $V_0^L(n_K)$ is in the unbounded component of the complement of $P$ and $\Gamma(0,v_0^L(n_K))$, which agrees with $\Gamma$ up to $z$, must leave the bounded component of the complement of $P$ from $z$ to reach it.}
\label{fig: fig_6}
\end{figure}
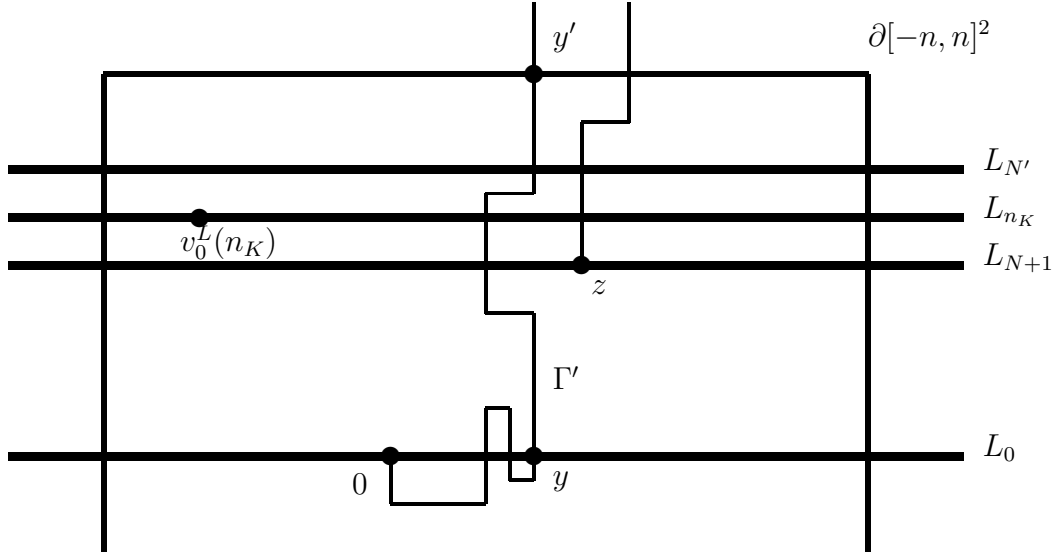

We claim that $z$ is in the interior of $P$. Indeed, traversing the segment of $\Gamma$ forwards from $z$, we never intersect $\Gamma'$ by assumption. So there is a first intersection with the boundary of $[-n, n]^2$ (along the top side), which must be to the right of $y'$ by item 1 above, so it is a point of $P$. This is necessarily the first intersection with $P$ by item 2. From here, continuing our trajectory by moving straight in the positive $e_2$ direction, we never re-intersect $P$, proving we have passed into the exterior of $P$.

On the other hand, $v_0^L(n_K)$ is in the exterior of $P$. To see why, first it cannot be in $\partial [-n,n]^2$. Next if it were in $\Gamma'$, then we note that $z$ is not in $\Gamma'$ by item 1, but $z$ is in $\Gamma(0,v_0^L(n_K))$, so this would contradict uniqueness of geodesics. Since $\Gamma(z, v_0^L(n_K))$ starts in the interior of $P$, it must intersect $P$ before $v_0^L(n_K)$. But this intersection occurs on $L_n$ by uniqueness of geodesics. Since this is true for all $n$ large, we see that $v_0^L(n_K) \notin \Gamma(z, v_0^L(n_K))$, a contradiction.

\emph{Property 1}. Assume there were some $\Gamma \neq \Gamma_0^L$ with $\Gamma_0^L \prec \Gamma$. A construction identical to that of the existence proof, with $\Gamma$ here filling the role of $\Gamma'$ from that argument, shows that $\Gamma_0^L$ will necessarily be isolated from infinitely many leftmost vertices, a contradiction.

\emph{Property 2}. Assume for the sake of contradiction that there were infinitely many $v_0^L(n)$'s not lying on $\Gamma_0^L$. By Lemma \ref{lem:v_finite}, the sequence $v_0^L(n)$ is asymptotically contained in $L_\epsilon(n)$, where $\epsilon$ is as in Corollary~\ref{cor: out_tree}. So for infinitely many $n$, we can find $\Gamma \neq \Gamma_0^L$ with $\Gamma \ni v_0^L(n)$ but $\Gamma \prec \Gamma_0^L$. This implies by Remark \ref{rem: cannot_remove} that $\Gamma$ or $\Gamma_0^L$ must intersect $L_0$ after intersecting $L_n$. In particular, for infinitely many large $n$ we can find $y \in L_\epsilon(n)$ such that $\out_0(y) \cap L_0 \neq \emptyset$, a contradiction.

\emph{Property 4}. Let $\Gamma$ be a subsequential limit of $\Gamma(0,v_k)$ and note that it is directed in $S$ since $(v_k)$ is. If it is not equal to $\Gamma_0^L$, then by property 1, $\Gamma_0^L \prec \Gamma$. Now we use the same argument as in the existence proof. To summarize, one chooses $N$ so large that the leftmost vertex $w_N$ of $\Gamma$ in $L_N$ is strictly to the right of $\Gamma_0^L$ and $\text{out}_0(w_N)$ does not touch $L_0$. Then one takes $k$ so large that $\Gamma(0,v_k)$ agrees with $\Gamma$ until $w_N$. This contradicts uniqueness of passage times, as $\Gamma(0,v_k)$ must pass from $w_N$, which is to the right of $\Gamma_0^L$, to $v_k$, which is to the left of $\Gamma_0^L$.

\emph{Property 3}. Assume for the sake of contradiction that for some $y \in \Gamma_0^L$, $\Gamma_y^L$ is not identical to the terminal segment of $\Gamma_0^L$. By the preceding work, we see that $\Gamma_y^L$ is directed in $S$, so either $\Gamma_y^L \prec \Gamma_0^L$ or $\Gamma_0^L \prec \Gamma_y^L$. Because subsegments of geodesics are geodesics, the terminal segment of $\Gamma_0^L$ is a geodesic from $y$ directed in $S$; therefore, $\Gamma_0^L \prec \Gamma_y^L$. Let $\Gamma_y^L = (y=z_1, z_2, \ldots)$.

By property 4, $\lim_k \Gamma(0,z_k) = \Gamma_0^L$. Once again, choose $k$ large enough so that $\Gamma(0,z_k)$ agrees with $\Gamma$ up to and one step beyond the point where $\Gamma$ and $\Gamma_y^L$ split. This contradicts uniqueness of geodesics, as the segment of $\Gamma(0,z_k)$ from $y$ to $z_k$ is a geodesic, but so is the portion of $\Gamma_y^L$ from $y$ to $z_k$, and they are different.
\end{proof}

\subsection{Coalescence of extremal geodesics}

We begin with coalescence for extremal full-plane geodesics using the argument of Licea-Newman. Since this has appeared so many times in the literature (in directed LPP geometric weights \cite{FP05}, general weights \cite{GRS2}, the Hammersley process \cite{CP11}, directed spanning forest \cite{CT12}, and so on), we just give a sketch, but prove in detail one fact which is needed to use the argument for extremal geodesics.
\begin{prop}\label{prop: full_plane_coalescence}
With probability one, for each $x,y \in \mathbb{Z}^2$, $\Gamma_x^L$ coalesces with $\Gamma_y^L$ and $\Gamma_x^R$ coalesces with $\Gamma_y^R$.
\end{prop}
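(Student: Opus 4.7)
The plan is a proof by contradiction following the standard Licea--Newman coalescence argument (as in \cite{LN96, FP05, GRS2, CP11, CT12}). Suppose $\mathbb{P}(\Gamma_x^L \text{ and } \Gamma_y^L \text{ do not coalesce}) > 0$ for some $x, y \in \mathbb{Z}^2$. Using translation covariance of the $(t_e)$-measurable construction in Proposition~\ref{prop: left-most_definition}, together with a pigeonhole along a unit-step lattice path from $x$ to $y$, I reduce to the case
\[
\mathbb{P}\big(\Gamma_0^L \text{ and } \Gamma_{e_1}^L \text{ do not coalesce}\big) = c > 0.
\]
The stationary indicators $\xi_k = \mathbf{1}\{\Gamma_{ke_1}^L \text{ and } \Gamma_{(k+1)e_1}^L \text{ do not coalesce}\}$ then have common expectation $c$, and Birkhoff's ergodic theorem applied to the shift by $e_1$ yields, almost surely, a set $J \subset \mathbb{Z}$ of lower density at least $c$ on which $\xi_k = 1$.

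For each $k \in J$, Proposition~\ref{prop: left-most_definition}(3) together with uniqueness of passage times on $\mathcal{X}$ forces $\Gamma_{ke_1}^L$ and $\Gamma_{(k+1)e_1}^L$ to share only a finite initial segment, while Lemma~\ref{lem:totalorderedpizza} gives the strict ordering $\Gamma_{ke_1}^L \prec \Gamma_{(k+1)e_1}^L$. Combining these across indices in $J$, the family $\{\Gamma_{ke_1}^L\}_{k \in \mathbb{Z}}$ produces, past an initial merging region, at least $|J|$ pairwise disjoint infinite branches, all directed in the sector $S$ by Corollary~\ref{cor: directed_sector}.

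The argument then concludes by a planar Burton--Keane-style count. Fix large $N$ and take $M = M(N)$ to be the smallest integer so that $B_M := [-M,M]^2$ contains the first exit points of each $\Gamma_{ke_1}^L$ for $|k| \leq N$ from the starting strip. Sector-directedness in $S$ forces $M = O(N)$, while the ordering and pairwise disjointness yield at least $|J \cap [-N,N]| \geq cN$ distinct exit points, all lying in an arc of $\partial B_M$ of lattice size $O(M)$ (by Corollary~\ref{cor: directed_sector}). Taking $N \to \infty$ with the geometric constants held fixed produces the desired contradiction. The main obstacle --- which I expect is precisely the single fact the authors advertise proving in detail --- is rigorously linking the $\prec$-ordering (defined via intersections with distant horizontal lines $L_n$) to genuine pairwise planar disjointness of the geodesic tails, so that the exit-point count on $\partial B_M$ is valid. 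This is where the non-directedness of FPP interferes with the simpler LPP/Hammersley versions of the argument, and it is likely handled using Corollary~\ref{cor: out_tree} to control the backtracking of extremal geodesics. The claim for the right-extremal geodesics $\Gamma_x^R$ then follows by symmetry via Remark~\ref{rem: opposite_order}.
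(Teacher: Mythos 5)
Your high-level plan (Licea--Newman reduction, density via Birkhoff, pairwise disjointness of non-coalescing extremal tails, Burton--Keane count) is the right framework, and you are correct that non-coalescence of $\Gamma_x^L$ and $\Gamma_y^L$ gives pairwise disjoint tails via Proposition~\ref{prop: left-most_definition}(3) and uniqueness. However, there is a genuine gap in the final count, and you have misidentified where the technical difficulty lies.

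\textbf{The Burton--Keane count as stated produces no contradiction.} You obtain $\gtrsim cN$ pairwise disjoint $S$-directed geodesics emanating from $L_0 \cap [-N,N]$, each leaving a box $B_M$ with $M = O(N)$ through a distinct boundary vertex in an arc of lattice size $O(M) = O(N)$. But $cN$ distinct points fit comfortably in a boundary arc of size $O(N)$; a linear number of disjoint infinite paths starting on a linear portion of $L_0$ is perfectly consistent with planarity (think of $cN$ parallel vertical rays). The standard Licea--Newman argument --- and the paper's proof --- is not a linear-vs-linear exit-point count. What actually produces the contradiction is a \emph{volume-vs-boundary} count: one needs an event of positive probability, stationary under $\mathbb{Z}^2$-shifts, asserting that the geodesic from $v$ is \emph{isolated}, and then the ergodic theorem gives order-$N^2$ isolated geodesics in $[-N,N]^2$, each contributing a distinct boundary vertex, against a boundary of size $O(N)$.

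\textbf{The missing ingredient is the barrier modification, and this is where the paper's detailed lemma lives.} To produce the isolation event one modifies the weights of the edges joining an interval $[x_1,x_2] \subset L_0$ to $L_{-1}$ to be enormous (possible by the i.i.d. assumption or by the upward finite-energy condition \textbf{A2'}(f)), so that no geodesic from the lower half-plane can reach $\Gamma_0^L$, while flanking geodesics $\Gamma_{x_1}^L, \Gamma_{x_2}^L$ shield it from the sides. The nontrivial point --- which you attribute instead to ``linking the $\prec$-ordering to planar disjointness,'' but that part is actually handled by Proposition~\ref{prop: left-most_definition}(3) --- is that raising the barrier weights does not change the \emph{extremal} geodesics $\Gamma_0^L, \Gamma_{x_1}^L, \Gamma_{x_2}^L$. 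These are not arbitrary geodesics from $0, x_1, x_2$; they are the $\prec$-maximal ones, and one must rule out that the modified configuration admits a geodesic strictly to their left. The paper proves exactly this: if $f \notin \Gamma_0^L$ and $r \geq t_f$, then the left-most geodesic in the configuration $t(r)$ equals $\Gamma_0^L$. Without this lemma the modification step has no content, and without the modification step the Burton--Keane count is underpowered.

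Your observation about the ordering and the use of Corollary~\ref{cor: out_tree} to control backtracking is sound and is indeed part of the machinery, but by itself it only establishes pairwise disjointness, not the isolation required for the quadratic density count.
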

\begin{proof}
The argument is simpler with unbounded weights, and an extension to weights which obey assumptions {\bf A1'} or {\bf A2'} appeared in \cite[Section~6]{DH}. So for ease of exposition, we assume i.i.d. weights that are unbounded. Assume that with positive probability, $\Gamma_0^L$ and $\Gamma_x^L$ do not coalesce for some $x \in \mathbb{Z}^2$. Then using the fact that these paths are directed in $S$ and the ergodic theorem, we can find $x_1,x_2 \in L_0$ such that $x_1$ is to the left of 0, $x_2$ is to the right of zero, and with positive probability, the paths $\Gamma_0^L, \Gamma_{x_1}^L,$ and $\Gamma_{x_2}^L$ (a) intersect $L_0$ only at their starting points and remain in the upper half-plane and (b) do not share any vertices. On this event, one then uses either the i.i.d. assumption or the upward finite energy condition to ``modify'' all edges connecting the interval $[x_1,x_2]$ to $L_{-1}$ to be so large that no geodesic in $\mathbb{Z}^2$ uses these edges. This forces an event of positive probability on which no infinite geodesics starting in the lower half-plane can intersect $\Gamma_0^L$, due to the high-weight barrier. One then completes the proof with a Burton-Keane argument, which shows that in a large box, there is a volume-order number of such ``isolated'' infinite geodesics, but each must intersect the boundary of this box in a unique point, and this is a contradiction since the size of the boundary of the box is smaller order than the volume.

To carry out this argument in our context, we need to know that when we increase the edge-weights of the barrier, the left-most infinite geodesics of $0, x_1,x_2$ do not change. A key point is that the edges we modify are not in these geodesics. Thus we need to verify:
\begin{lem}
With probability one, the following holds. Let $f$ be an edge and for $r \geq 0$, denote by $t(r)$ the configuration which agrees with $(t_e)$ except at $f$, where it takes the value $r$. Then if $f$ is not in the geodesic $\Gamma_0^L$ in configuration $(t_e)$ and $r \geq t_f$, one has $\Gamma_0^L((t_e)) = \Gamma_0^L(t(r))$.
\end{lem}
\begin{proof}
Denote by $\Gamma_0^L(r)$ the left-most infinite geodesic from $0$ in $S$ in the configuration $t(r)$ and assume that it does not equal $\Gamma_0^L$. First of all, $\Gamma_0^L$ is still an infinite geodesic directed in $S$ in the weights $t(r)$, so if $\Gamma_0^L(r) \neq \Gamma_0^L$, we must have $\Gamma_0^L(r) \prec \Gamma_0^L$ strictly. Next, $f$ cannot be contained in $\Gamma_0^L(r)$, or else this path would be a geodesic in the original weights $(t_e)$ and this contradicts the extremality of $\Gamma_0^L$. By property 4 of Proposition~\ref{prop: left-most_definition}, one has $\Gamma_0^L = \lim_k \Gamma(0,z_k)$, where $z_1, z_2, \ldots$ are the vertices of $\Gamma_0^L(r)$. But now we choose $N$ so large that $v_0^L(N)$ is not contained in $\Gamma_0^L(r)$ and $\text{out}_0(v_0^L(N)) \cap (L_0 \cup f) = \emptyset$ (by Corollary~\ref{cor: out_tree}). Let $k$ be so large that $\Gamma(0,z_k)$ agrees with $\Gamma_0^L$ up to $v_0^L(N)$. Then $\Gamma(0,z_k)$ contains $v_0^L(N)$ and since it is a geodesic in the original weights $(t_e)$, it must remain one in $t(r)$ (as $r \geq t_f$ and it does not contain $f$). This contradicts uniqueness of passage times, as $\Gamma(0,z_k)$ and the portion of $\Gamma_0^L(r)$ from $0$ to $z_k$ must have the same passage time in the original weights.
\end{proof}

\end{proof}




We wish to show that for all $x,y \in L_0$, the geodesics $\Gamma_{x,H}^L$ and $\Gamma_{y,H}^L$ coalesce. To do this, we first show:
\begin{prop}\label{prop: equality}
On the positive probability event that $\Gamma_0^L$ has all its vertices in $V_H$, almost surely $\Gamma_0^L = \Gamma_{0,H}^L$.  A similar statement holds for right-most geodesics.
\end{prop}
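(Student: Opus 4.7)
The plan is to prove $\Gamma_0^L = \Gamma_{0,H}^L$ on the event $\{\Gamma_0^L \subseteq V_H\}$; the corresponding statement for right-most geodesics follows by symmetry. Throughout I restrict to the event $\mathcal{X}$ from \eqref{eq: X_def} intersected with $\{\Gamma_0^L \subseteq V_H\}$. First I would observe that since $\Gamma_0^L$ is a $\mathbb{Z}^2$-geodesic contained in $V_H$, each of its finite subpaths is a $\mathbb{Z}^2$-geodesic lying in $V_H$ and is therefore also the unique half-plane geodesic between its endpoints (because $T_H = T$ whenever the full-plane geodesic stays in $V_H$). Thus $\Gamma_0^L$ is a half-plane geodesic directed in $S$, and so $\Gamma_0^L \in \mathcal{G}_H(0)$. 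By the half-plane analogue of Proposition~\ref{prop: left-most_definition}(1), $\Gamma_{0,H}^L$ is maximal in $\mathcal{G}_H(0)$ under $\prec$, so either $\Gamma_0^L \prec \Gamma_{0,H}^L$ strictly, or the two paths coalesce; in the latter case, uniqueness of half-plane geodesics from $0$ forces $\Gamma_0^L = \Gamma_{0,H}^L$.

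Assume for contradiction that $\Gamma_0^L \prec \Gamma_{0,H}^L$ strictly. Combining property 2 of Proposition~\ref{prop: left-most_definition} with the proof of Lemma~\ref{lem:totalorderedpizza}, the leftmost intersection $v_{0,H}^L(n)$ of $\Gamma_{0,H}^L$ with $L_n$ is strictly to the left of $v_0^L(n)$ for all large $n$, and by the half-plane analogue of Lemma~\ref{lem:v_finite} the sequence $(v_{0,H}^L(n))$ is directed in $S$. Hence property 4 of Proposition~\ref{prop: left-most_definition} yields $\Gamma(0, v_{0,H}^L(n)) \to \Gamma_0^L$, while its half-plane analogue gives $\Gamma_H(0, v_{0,H}^L(n)) \to \Gamma_{0,H}^L$. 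If $\Gamma(0, v_{0,H}^L(n)) \subseteq V_H$ for infinitely many $n$, uniqueness of passage times forces $\Gamma(0, v_{0,H}^L(n)) = \Gamma_H(0, v_{0,H}^L(n))$ along that subsequence, making the two limits agree and contradicting the strict ordering.

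Therefore, for all large $n$, the full-plane geodesic $\Gamma(0, v_{0,H}^L(n))$ makes an excursion into $L_{-1}$. Let $(x_n, 0) \in L_0$ be the vertex from which it takes its first step into $L_{-1}$, and let $u_n$ be the last vertex on which it still agrees with $\Gamma_0^L$, i.e.\ the $M_n$-th vertex of $\Gamma_0^L$, where $M_n$ is the number of shared initial edges of the two paths. Convergence gives $M_n \to \infty$, so $u_n \to \infty$ along $\Gamma_0^L$; because $\Gamma_0^L$ is directed in $S \subseteq (0, \pi)$, the height $h_n$ of $u_n$ tends to infinity and, for large $n$, $u_n \in L_\epsilon(h_n)$ for the constant $\epsilon$ of Corollary~\ref{cor: out_tree}. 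Since $\Gamma_0^L$ meets $L_0$ only finitely often, $u_n \notin L_0$ eventually, so $(x_n, 0)$ appears on $\Gamma(0, v_{0,H}^L(n))$ strictly after $u_n$.

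The closing step, and the place where the structural work of the section pays off, is an application of Corollary~\ref{cor: out_tree}. The subpath of $\Gamma(0, v_{0,H}^L(n))$ from $0$ to $(x_n, 0)$ is a full-plane geodesic, so by uniqueness of passage times it equals $\Gamma(0, (x_n, 0))$; since this subpath passes through $u_n$, one obtains $(x_n, 0) \in \mathrm{out}_0(u_n) \cap L_0$. On the other hand, $u_n \in L_\epsilon(h_n)$ with $h_n \to \infty$, so Corollary~\ref{cor: out_tree} (applied at level $0$) gives $\mathrm{out}_0(u_n) \cap L_0 = \emptyset$ for all sufficiently large $n$, a contradiction. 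Hence $\Gamma_0^L = \Gamma_{0,H}^L$ on $\{\Gamma_0^L \subseteq V_H\}$; the argument for $\Gamma_0^R$ and $\Gamma_{0,H}^R$ is identical after replacing ``leftmost'' by ``rightmost'' throughout.
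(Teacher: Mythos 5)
Your proof is correct and follows the same overall strategy as the paper's: observe that on the given event $\Gamma_0^L$ is itself a half-plane infinite geodesic directed in $S$ (so $\Gamma_0^L \in \mathcal{G}_H(0)$), compare leftmost intersections so that $v_{0,H}^L(n)$ lies at or to the left of $v_0^L(n)$ and the sequence $(v_{0,H}^L(n))$ is directed in $S$, invoke property 4 of Proposition~\ref{prop: left-most_definition} to get $\Gamma(0,v_{0,H}^L(n)) \to \Gamma_0^L$, and finish with Corollary~\ref{cor: out_tree}. Where you diverge from the paper is the closing step. The paper fixes one large $N$ with $\mathrm{out}_0(v_N)\cap L_0=\emptyset$, shows that once $\Gamma(0,v_{0,H}^L(n))$ agrees with $\Gamma_0^L$ up to $v_N$ it can never touch $L_0$ afterward and so stays in $V_H$, whence it coincides with $\Gamma_H(0,v_{0,H}^L(n))$, an initial segment of $\Gamma_{0,H}^L$; the contradiction is that this segment contains $v_N$ while $\Gamma_{0,H}^L$ does not (the latter requires noting, as the paper does implicitly, that two non-coalescing half-plane geodesics from $0$ share only finitely many vertices, so one may further enlarge $N$ to guarantee $v_N\notin\Gamma_{0,H}^L$). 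You instead dichotomize: if $\Gamma(0,v_{0,H}^L(n))\subseteq V_H$ along a subsequence, the full- and half-plane limits must coincide; otherwise the geodesic dips below $L_0$ eventually, and you apply Corollary~\ref{cor: out_tree} at the moving divergence points $u_n$ (rather than at a fixed $v_N$) to obtain $\mathrm{out}_0(u_n)\cap L_0 = \emptyset$, contradicting $(x_n,0)\in\mathrm{out}_0(u_n)$. Both endgames are sound and rely on the same lemmas; yours is a clean self-contained alternative that sidesteps the auxiliary fact about $v_N\notin\Gamma_{0,H}^L$.
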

\begin{proof}
We prove only the first; the second is similar. Almost surely, the geodesic $\Gamma_0^L$ is directed in $S$. Since $\theta_1 > 0$ and $\theta_2 < \pi$, there must be a last intersection point $w$ of $\Gamma_0^L$ with $L_0$. Then $\Gamma_w^L$ is the portion of $\Gamma_0^L$ from $w$ onward, so $\Gamma_w^L$ has only vertices in $V_H$. By horizontal translation invariance, $\mathbb{P}(A_0)>0$, where $A_0$ is the event
\[
A_0 = \{\Gamma_0^L \text{ has only vertices in } V_H\}.
\]
For $\omega \in A_0$, the path $\Gamma_0^L$ is an infinite geodesic in $\mathbb{H}$, so if we denote by $v_n$ the left-most intersection point of $\Gamma_0^L$ with $L_n$, then $v_n = v_0^L(n)$ for all large $n$ and so the point $v_{0,H}^L(n)$ cannot be to the right of $v_n$. However, since $\Gamma_{0,H}^L$ contains all but finitely many vertices $v_{0,H}^L(n)$ and is directed in $S$ (by Proposition~\ref{prop: left-most_definition}), the sequence $(v_{0,H}^L(n))$ is directed in $S$. So property 4 of Proposition~\ref{prop: left-most_definition} implies that $\lim_n \Gamma(0,v_{0,H}^L(n)) = \Gamma_0^L$.

If $v_n = v_{0,H}^L(n)$ for infinitely many $n$, then $\Gamma_0^L = \Gamma_{0,H}^L$, so assume otherwise. Then there is $N = N(\omega)$ such that if $n \geq N$, then $v_{0,H}^L(n)$ is strictly to the left of $v_n$ (and therefore $v_{0,H}^L(n) \notin \Gamma_0^L$). Further increase $N$ so that (as $(v_n)$ is directed in $S$), by Corollary~\ref{cor: out_tree}, the set $\text{out}_0(v_N)$ does not intersect $L_0$. Now find $n$ large enough so that $\Gamma(0,v_{0,H}^L(n))$ agrees with $\Gamma_0^L$ up to $v_N$. Then $\Gamma(0,v_{0,H}^L(n))$ is in the upper half-plane, but contains $v_N$, which is not a vertex on $\Gamma_{0,H}^L$, and this last path begins with the unique geodesic from 0 to $v_{0,H}^L(n)$ in the upper half-plane. This is a contradiction to uniqueness of passage times.
\end{proof}

Now we can give the coalescence result.
\begin{prop}
Almost surely, for all $x,y \in V_H$, $\Gamma_{x,H}^L$ coalesces with $\Gamma_{y,H}^L$ and $\Gamma_{x,H}^R$ coalesces with $\Gamma_{y,H}^R$.
\end{prop}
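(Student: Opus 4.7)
The plan is to reduce the half-plane statement to the full-plane coalescence of Proposition \ref{prop: full_plane_coalescence} via Proposition \ref{prop: equality}. I would start by noting that horizontal translation invariance of $\mathbb{P}$ together with Proposition \ref{prop: equality} yields $\mathbb{P}(\Gamma_x^L = \Gamma_{x,H}^L) > 0$ for every $x \in L_0$. Since horizontal lattice shifts of $L_0$ are measure-preserving and ergodic, the ergodic theorem produces, almost surely, infinitely many ``good'' vertices $x \in L_0$ (with both arbitrarily large positive and arbitrarily large negative first coordinate) satisfying $\Gamma_x^L = \Gamma_{x,H}^L$. Proposition \ref{prop: full_plane_coalescence} forces all such $\Gamma_x^L$ for good $x$ to coalesce in $\mathbb{Z}^2$; since each such $\Gamma_x^L$ equals $\Gamma_{x,H}^L$, these half-plane geodesics share a common infinite tail, which I will denote $\Gamma^*$.

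The remaining task is to show $\Gamma_{y,H}^L$ coalesces with $\Gamma^*$ for every $y \in V_H$, and my plan is a planarity argument. Suppose for contradiction $\Gamma_{y,H}^L$ does not coalesce with $\Gamma^*$. Then $\Gamma_{y,H}^L$ is vertex-disjoint from $\Gamma_{x,H}^L$ for every good $x$, since by property 3 of Proposition \ref{prop: left-most_definition} combined with uniqueness of passage times, a single shared vertex would already give coalescence from that vertex onward. By Lemma \ref{lem:totalorderedpizza}, either $\Gamma_{y,H}^L \prec \Gamma^*$ or $\Gamma^* \prec \Gamma_{y,H}^L$ strictly. In the first case I would select a good $x$ with $x_1 \gg y_1$; in the second, with $x_1 \ll y_1$. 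Inside a box $[-M,M] \times [0,M]$ with $M$ large (possible by directedness in $S \subset (0,\pi)$), both $\Gamma_{x,H}^L$ and $\Gamma_{y,H}^L$ first exit through the top edge $L_M$, and their initial segments inside the box form disjoint simple arcs. Planarity then forces the left-to-right order of these first exit points on $L_M$ to match the left-to-right order of the starting points on $L_0$; but the assumed strict asymptotic inequality gives the reverse order for large $M$, the desired contradiction.

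I expect the main difficulty to be setting up the planarity argument when $y \notin L_0$, since then the $\Gamma_{y,H}^L$ arc has its interior endpoint inside the box rather than on its bottom edge. If $\Gamma_{y,H}^L$ touches $L_0$ at some point, I would use property 3 of Proposition \ref{prop: left-most_definition} to replace $y$ with the last such touching vertex and reduce to the boundary case. Otherwise, the chord $\Gamma_{x,H}^L$ (from $x \in L_0$ to its first exit point on $L_M$) together with two segments of the box boundary splits the box into two components, and with $x$ chosen on the appropriate side of $y$ one verifies that $y$ and the first exit point of $\Gamma_{y,H}^L$ lie in different components, so that any simple path from $y$ to that exit must cross $\Gamma_{x,H}^L$. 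The right-most case is identical after using Remark \ref{rem: opposite_order} to translate the ordering.
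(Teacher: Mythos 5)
Your strategy matches the paper's: use the ergodic theorem together with Proposition~\ref{prop: equality} to supply a positive density of ``good'' vertices $z \in L_0$ with $\Gamma_{z,H}^L = \Gamma_z^L$, invoke full-plane coalescence (Proposition~\ref{prop: full_plane_coalescence}) to make the corresponding half-plane geodesics coalesce with each other, and then argue by planarity that an arbitrary $\Gamma_{y,H}^L$ must intersect one of these, at which point property 3 of Proposition~\ref{prop: left-most_definition} finishes. Your planarity step is organized around the total order $\prec$ and traps from one side only; the paper instead traps $y$ between two good vertices on opposite sides of it and does not need the order. (Also, ``share a common infinite tail'' is not literally correct for an infinite pairwise-coalescing family; what you actually use is that all good $\Gamma_{x,H}^L$ lie in one $\prec$-equivalence class, so the comparison with $\Gamma_{y,H}^L$ is independent of the representative.)

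The one genuine gap is in the sentence ``with $x$ chosen on the appropriate side of $y$ one verifies that $y$ and the first exit point of $\Gamma_{y,H}^L$ lie in different components.'' For $y \notin L_0$ this is exactly the crux, and simply taking a good $x$ with $x_1 \gg y_1$ does not verify it: the arc $\Gamma_{x,H}^L$ starts far to the right of $y$ but could swing back and cross the vertical segment from $y$ down to $L_0$ at some height below $y \cdot e_2$, in which case $y$ sits in the component containing the bottom-right corner and no crossing of $\Gamma_{y,H}^L$ is forced. The paper closes this by refining the ergodic selection: fix $N_1 > y\cdot e_2$ and choose $b$ so that $\Gamma_{0,H}^L$ meets $D = \cup_{k=0}^{N_1} L_k$ only inside $[-b,b]^2$ with positive probability, then pick good vertices far enough away that the translated containment event holds, which forces the chosen geodesics to stay clear of $y$ at all heights up to $N_1$. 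You would need the same (or an equivalent) refinement of your ``good'' vertices to make the component argument go through.
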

\begin{proof}
Again, we prove only for left-most geodesics. Let $x,y \in L_0$. By the ergodic theorem, one can find $z_1, z_2 \in L_0$ such that $z_1$ is to the left of both $x$ and $y$, and $z_2$ is to the right of both $x$ and $y$, such that $\Gamma_{z_1}^L = \Gamma_{z_1,H}^L$ and $\Gamma_{z_2}^L = \Gamma_{z_2,H}^L$. Since both $\Gamma_{z_1}^L$ and $\Gamma_{z_2}^L$ coalesce, so do $\Gamma_{z_1,H}^L$ and $\Gamma_{z_2,H}^L$. By planarity then, $\Gamma_{x,H}^L$ must intersect one of $\Gamma_{z_1,H}^L$ or $\Gamma_{z_2,H}^L$. However, item 3 from Proposition~\ref{prop: left-most_definition} implies that $\Gamma_{x,H}^L$ coalesces with both $\Gamma_{z_1,H}^L$ and $\Gamma_{z_2,H}^L$. The same is true for $\Gamma_{y,H}^L$, so they must coalesce together.

A similar trapping argument gives coalescence for $x,y \in V_H$. Pick $N_1 > \max\{x\cdot e_2, y \cdot e_2\}$ and $N_2 > \max\{|x \cdot e_1|, |y \cdot e_1|\}$. Since $\Gamma_{0,H}^L$ is directed in $S$, it has finitely many intersections with $D := \cup_{k=0}^{N_1} L_k$. So pick $b>0$ such that with positive probability, $\Gamma_{0,H}^L$ intersects $D$ only in the box $[-b,b]^2$. By the ergodic theorem, there are infinitely many $z \in L_0$ (in both directions from $0$) such that $\Gamma_{z,H}^L$ intersects $D$ only in $[-b,b]^2 + z$. Choose such a $z$ on the left of $-N_2 - b$ and such a $z'$ on the right of $N_2 + b$. Then $\Gamma_{z,H}^L$ and $\Gamma_{z',H}^L$ coalesce and trap both $\Gamma_{x,H}^L$ and $\Gamma_{y,H}^L$.
\end{proof}

\section{Equality of left and right-most geodesics}\label{sec: equal_geodesics}

Here we prove asymptotics for Busemann differences. Due to the last section, we can define the half-plane Busemann function
\[
B_H^L(x,y) = \lim_n \left[ T_H(x,y_n) - T_H(y,y_n) \right] \text{ for } x,y \in V_H,
\]
where the vertices of $\Gamma_{0,H}^L$ are $0,y_1, y_2, \ldots$, in order. This limit exists by monotonicity for $y=y_0$:
\begin{align*}
T_H(x,y_n) - T_H(y_0,y_n) &= T_H(x,y_n)+ T_H(y_n,y_{n+1}) - T_H(y_0,y_{n+1}) \\
&\geq T_H(x,y_{n+1})-T_H(y,y_{n+1})
\end{align*}
and for general $y$ we use $B_H^L(x,y) = B_H^L(x,y_0) + B_H^L(y_0,y)$. A similar definition is made for $B_H^R$. Note that by coalescence, we can choose $\Gamma_{z,H}^L$ instead of $\Gamma_{0,H}^L$ for any $z \in V_H$.

Now we define
\[
\Delta_H(x,y) = B_H^L(x,y) - B_H^R(x,y) \text{ for } x,y \in V_H.
\]
\begin{prop}\label{prop: bigger}
If $\mathbb{P}(\Gamma_{0,H}^L \neq \Gamma_{0,H}^R) > 0$, then $\mathbb{E}\Delta_H(0,e_1)<0$.
\end{prop}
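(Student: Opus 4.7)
My plan is a two-step argument. First, I would establish $\Delta_H(0,e_1) \le 0$ almost surely via a paths-crossing argument of Alm--Wierman type. Second, I would exhibit a positive-probability sub-event of $\{\Gamma_{0,H}^L \neq \Gamma_{0,H}^R\}$ on which $\Delta_H(0,e_1) < 0$ strictly. Combined with the hypothesis, these two facts yield $\mathbb{E}\Delta_H(0,e_1) < 0$.

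For the first step, I would use coalescence of half-plane extremal geodesics, established in the previous subsection. Let $z^L$ denote the coalescence vertex of $\Gamma_{0,H}^L$ with $\Gamma_{e_1,H}^L$, and $z^R$ the analogous vertex for the right-most geodesics. Pushing the Busemann-defining sequence past $z^*$ and cancelling the shared tail gives $B_H^*(0,e_1) = T_H(0,z^*) - T_H(e_1,z^*)$ for $* \in \{L,R\}$, so
\[
\Delta_H(0,e_1) = [T_H(0,z^L) + T_H(e_1,z^R)] - [T_H(0,z^R) + T_H(e_1,z^L)].
\]
Since $0$ lies to the left of $e_1$ on $L_0$ while $z^L$ sits asymptotically to the upper-left of $z^R$, the half-plane geodesics $\Gamma_H(0,z^R)$ and $\Gamma_H(e_1,z^L)$ form a topological crossing pair in $\mathbb{H}$. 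Planarity of $\mathbb{Z}^2$ (two vertex-disjoint nearest-neighbor lattice paths cannot topologically cross) forces them to share some common vertex $c$; breaking both passage times at $c$ and applying the triangle inequalities $T_H(0,c)+T_H(c,z^L) \ge T_H(0,z^L)$ and $T_H(e_1,c)+T_H(c,z^R) \ge T_H(e_1,z^R)$ then yields $\Delta_H(0,e_1) \le 0$.

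For the second step, I would work on a positive-probability sub-event on which $\Gamma_{0,H}^R$ uses the edge $\{0,e_1\}$ as its first step while $\Gamma_{0,H}^L$ does not visit $e_1$ at all. On this sub-event, $e_1 \in \Gamma_{0,H}^R$ gives $B_H^R(0,e_1) = t_{\{0,e_1\}}$, while the fact that the geodesic $\Gamma_H(0,z^L) = \Gamma_{0,H}^L[0 \to z^L]$ avoids $e_1$ together with uniqueness of passage times promotes the triangle bound to the strict inequality $T_H(0,z^L) < t_{\{0,e_1\}} + T_H(e_1,z^L)$, and hence $B_H^L(0,e_1) < B_H^R(0,e_1)$, i.e., $\Delta_H(0,e_1) < 0$. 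To produce such a sub-event, I would combine the ordering of extremal geodesics (the first step of $\Gamma_{0,H}^L$ never lies strictly to the right of that of $\Gamma_{0,H}^R$) with translation invariance along $L_0$ and the upward finite-energy condition, in order to localize the splitting vertex of $\Gamma_{0,H}^L$ and $\Gamma_{0,H}^R$ at the origin with the desired first-edge configuration. The main obstacle is precisely this reduction -- extracting positive probability of the specific first-edge configuration from the bare non-coincidence hypothesis -- and I expect to resolve it via a Burton--Keane-type argument along $L_0$, combined with careful use of finite energy to rule out degenerate scenarios where $\Gamma_{0,H}^L$ re-visits $e_1$ after taking a different first step.
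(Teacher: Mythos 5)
Your Step 1 is the standard Alm--Wierman paths-crossing argument and is in substance the same as the paper's. One caveat: the paper argues the crossing using the two \emph{infinite} geodesics $\Gamma_{0,H}^R$ and $\Gamma_{e_1,H}^L$, whose asymptotic directedness makes the topology clean -- one builds a curve out of $\Gamma_{e_1,H}^L$ up to $v_{0,H}^L(n)$ and the portion of $L_n$ to its left, and observes that $\Gamma_{0,H}^R$ must cross this curve but cannot do so on $L_n$. Your version works with the \emph{finite} segments $\Gamma_H(0,z^R)$ and $\Gamma_H(e_1,z^L)$, and the assertion that these two finite lattice paths ``form a topological crossing pair'' is not automatic from the positions of $0,e_1,z^L,z^R$ alone; you would need a sub-argument (e.g.\ reducing to the infinite case, which is what the paper effectively does). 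This is a recoverable but real gap in rigor.

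Step 2 diverges genuinely from the paper, and the gap here is substantive and is exactly the one you acknowledge at the end. You want a positive-probability event on which $\Gamma_{0,H}^R$ takes $\{0,e_1\}$ as its first step while $\Gamma_{0,H}^L$ does not, but you have not produced it, and the plan you sketch has several obstructions. First, the ordering $\prec$ is defined \emph{asymptotically} (via leftmost intersections with $L_n$ for large $n$, see Lemma~\ref{lem:totalorderedpizza}); it carries no information about the first edge, so the parenthetical claim ``the first step of $\Gamma_{0,H}^L$ never lies strictly to the right of that of $\Gamma_{0,H}^R$'' is unjustified. Second, the vertex at which $\Gamma_{0,H}^L$ and $\Gamma_{0,H}^R$ split need not lie on $L_0$ at all, so translation invariance along $L_0$ alone does not position the split at the origin. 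Third, using upward finite energy to force a specific first edge is delicate precisely because raising or lowering weights can change the identity of $\Gamma_{0,H}^L$ and $\Gamma_{0,H}^R$ (and of $z^L,z^R$); the paper has to prove a careful lemma inside Proposition~\ref{prop: full_plane_coalescence} just to control how $\Gamma_0^L$ reacts to a single weight modification. Burton--Keane is also not the natural tool here.

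The paper sidesteps all of this. It argues indirectly: suppose $\Delta_H(0,e_1)=0$ a.s.; then by additivity $\Delta_H(0,ke_1)=0$ a.s.\ for every $k$. Choose $b_1$ so that with positive probability $c$ the symmetric difference $\Gamma_{0,H}^L\,\Delta\,\Gamma_{0,H}^R$ meets $[-b_1,b_1]^2$, and $b_2$ so that with probability $>1-c$ both extremal geodesics escape the strip $\cup_{i=0}^{b_1}L_i$ within $[-b_2,b_2]\times[0,b_1]$. On the positive-probability intersection of the first event with the $4b_2 e_1$-translate of the second, there is an edge $e\subset[-b_1,b_1]^2$ on $\Gamma_{0,H}^L$ but not on $\Gamma_{0,H}^R$, and $e$ is not on $\Gamma_{4b_2 e_1,H}^L$ either. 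Writing $z_1,z_2$ for the coalescence points of the $L$- and $R$-geodesics from $0$ and $4b_2 e_1$, one has $\Delta_H(0,4b_2 e_1)=T_H(0,z_1)+T_H(4b_2 e_1,z_2)-T_H(0,z_2)-T_H(4b_2 e_1,z_1)$, and $e$ appears in exactly one of these four geodesics, so $\Delta_H(0,4b_2 e_1)=0$ contradicts unique passage times. No finite-energy modification, no Burton--Keane, and no localization of a specific first-edge configuration are needed. You should adopt something like this argument for Step 2.
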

\begin{proof}
We first show that $\Delta_H(0,e_1) \leq 0$ almost surely. For this, we claim that $\Gamma_{0,H}^R$ and $\Gamma_{e_1,H}^L$ share a vertex. To see why, let $n$ be such that $\Gamma_{e_1,H}^L$ contains $v_{0,H}^L(n)$, the left-most vertex of $V_{0,H}^n$. Such an $n$ exists because $\Gamma_{e_1,H}^L$ and $\Gamma_{0,H}^L$ coalesce. Consider the continuous path $P$ formed by following $\Gamma_{e_1,H}^L$ until $v_{0,H}^L(n)$, and then proceeding to the left on $L_n$. Since $\Gamma_{0,H}^R$ is directed in $S$, it must cross $P$. It cannot cross $L_n$ to the left of $v_{0,H}^L(n)$, so it must cross $\Gamma_{e_1,H}^L$ at some vertex $z$.

Let $y$ be a vertex on $\Gamma_{e_1,H}^L$ beyond $z$ and let $y'$ be a vertex on $\Gamma_{0,H}^R$ beyond $z$. Then
\begin{align*}
T_H(0,y') + T_H(e_1,y) &= T_H(0,z) + T_H(z,y') + T_H(e_1,z) + T_H(z,y) \\
&\geq T_H(0,y) + T_H(e_1,y').
\end{align*}
Rearranging and taking $y,y' \to \infty$ along their respective geodesics, we obtain $B_H^L(0,e_1) \leq B_H^R(0,e_1)$, or $\Delta_H(0,e_1) \leq 0$.

Next we must prove that since $\mathbb{P}$ has uniqueness of passage times, $\Delta_H(0,e_1) \neq 0$ with positive probability. This will complete the proof. If it were false, then by additivity, $\Delta_H(0,ke_1) = 0$ for all $k$ almost surely. Choose an integer $b_1>0$ such that $c:=\mathbb{P}(A(b_1))>0$, where
\[
A(b_1) = \{(\Gamma_{0,H}^L \Delta \Gamma_{0,H}^R) \cap [-b_1,b_1]^2 \neq \emptyset\}.
\]
Next, pick an integer $b_2>0$ such that $\mathbb{P}(B(b_1,b_2)) > 1-c$, where
\[
B(b_1,b_2) = \{(\Gamma_{0,H}^L \cup \Gamma_{0,H}^R) \cap (\cup_{i=0}^{b_1} L_i) \subset [-b_2,b_2] \times [0,b_1]\}.
\]
Setting $S$ to be the translation by $e_1$, one has by translation invariance
\[
\mathbb{P}(A(b_1) \cap T^{4b_2} B(b_1,b_2)) > 0.
\]
On this event, there is an edge $e$ in $[-b_1,b_1]^2$ which is in $\Gamma_{0,H}^L$ but not in $\Gamma_{0,H}^R$. (It could be in $\Gamma_{0,H}^R$ but not in $\Gamma_{0,H}^L$, but the argument is similar.) This edge cannot be in $\Gamma_{4b_2e_1,H}^L$ due to occurrence of $T^{4b_2} B(b_1,b_2)$. We claim that this forces $\Delta_H(0,4b_2e_1) \neq 0$, which is a contradiction. Indeed, write $z_1$ for the point of coalescence for $\Gamma_{0,H}^L$ and $\Gamma_{4b_2e_1,H}^L$ and $z_2$ for the point of coalescence for $\Gamma_{0,H}^R$ and $\Gamma_{4b_2e_1,H}^R$. Then
\[
\Delta_H(0,4b_2e_1) = T_H(0,z_1) + T_H(4b_2e_1,z_2) - \left[ T_H(0,z_2) + T_H(4b_2e_1,z_1) \right].
\]
However only the geodesic between $0$ and $z_1$ contains $e$, so $\Delta_H(0,4b_2e_1) = 0$ contradicts uniqueness of passage times.
\end{proof}
The preceding proposition is where the restriction to half-planes is really needed. Without this restriction, the leftmost geodesic from $e_1$ need not touch the rightmost from $0$, so one could not conclude $\Delta(0,e_1) \leq 0$ almost surely. 

Next, we use differentiability of the limit shape to show
\begin{prop}\label{prop: delta_0}
One has $\mathbb{E}\Delta_H(0,e_1)=0$. Therefore $\Gamma_{0,H}^L = \Gamma_{0,H}^R$ almost surely.
\end{prop}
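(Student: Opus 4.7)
My plan is to prove the unconditional identity $\mathbb{E}\Delta_H(0, e_1) = 0$; combined with Proposition \ref{prop: bigger} (contrapositive), this forces $\Gamma_{0,H}^L = \Gamma_{0,H}^R$ almost surely. The strategy is to show that both $B_H^L(0, \cdot)$ and $B_H^R(0, \cdot)$ have the same asymptotic linear rate, namely the vector $\rho$ from Theorem \ref{thm: busemann}, and then evaluate at $e_1$.

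First I would establish that for each $* = L, R$ there is a linear functional $f_H^*$ on $\mathbb{R}^2$ with
\[
B_H^*(0,x) = f_H^*(x) + o(\|x\|_1) \text{ almost surely as } x \to \infty.
\]
Horizontal translation invariance applied to the additive cocycle $(B_H^*(k e_1, (k+1) e_1))_{k \in \Z}$ gives $n^{-1} B_H^*(0, n e_1) \to \mathbb{E} B_H^*(0, e_1)$ via the ergodic theorem. For the vertical direction, Proposition \ref{prop: equality} provides a positive-density set of $x \in L_0$ on which $\Gamma_{x,H}^* = \Gamma_x^*$; on this set $B_H^*$ coincides with the fully translation-invariant full-plane Busemann function $B^*$, whose expectation is linear, and the additivity $B_H^*(x, z) = B_H^*(x, y) + B_H^*(y, z)$ lets us transfer linearity to arbitrary directions. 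Finally, the elementary bound $B_H^*(0,x) \leq T_H(0,x)$ together with the shape theorem yields $f_H^*(x) \leq g(x)$ for all $x$.

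Second, I would identify $f_H^*$ on directions in $S$. Writing $(v_n)$ for the successive vertices of $\Gamma_{0,H}^L$, consecutive vertices are joined by geodesic segments, so the definition of $B_H^L$ collapses to $B_H^L(0, v_n) = T_H(0, v_n)$. Since $(v_n)$ is directed in $S$, we may pass to a subsequence with $v_n/n$ converging to a positive multiple of $v_\phi$ for some $\phi \in S$; combining the shape theorem with step one then yields $f_H^L(v_\phi) = g(v_\phi) = 1$. The same argument applies to $f_H^R$. Combined with the bound $f_H^* \leq g$, the level set $\{f_H^* = 1\}$ is a supporting line of $\mathcal{B}$ at $v_\phi$. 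By construction of $S$, the line $P = \{r : r \cdot \rho = 1\}$ is a supporting line at every $v_\phi$ with $\phi \in S$, and it is the unique such supporting line: at interior points $\phi \in (\theta_1, \theta_2)$ this is automatic from $P$ being a face of $\mathcal{B}$, and at the endpoints it is exactly assumption \eqref{eq: diff_assumption_2}. Hence $f_H^L = f_H^R = \rho$, so $\mathbb{E}\Delta_H(0, e_1) = f_H^L(e_1) - f_H^R(e_1) = 0$.

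The main obstacle will be step one: establishing asymptotic linearity of the half-plane Busemann function despite the absence of vertical translation invariance. The positive-density matching between half-plane and full-plane extremal geodesics furnished by Proposition \ref{prop: equality} is the essential tool for transferring the ergodic control in the horizontal direction to the vertical direction; without it there is no direct route to controlling $\mathbb{E} B_H^*(0, e_2)$.
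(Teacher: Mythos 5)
Your proposal has the right ingredients---the ergodic theorem along $e_1$, the supporting-line characterization, the differentiability assumption, and Proposition~\ref{prop: equality} as the bridge between half-plane and full-plane---but it runs the supporting-line argument on the \emph{half-plane} linear functional $f_H^*$, and this creates two gaps that the paper's actual proof avoids by working in the full plane and transferring only along $e_1$.

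The first gap is in your step one. You assert a half-plane Busemann shape theorem $B_H^*(0,x) = f_H^*(x) + o(\|x\|_1)$, but your justification of the non-horizontal control (``additivity lets us transfer linearity to arbitrary directions'') is not an argument. The cocycle-shape-theorem machinery (\cite[Theorem~4.3]{DH}) requires full $\Z^2$ translation covariance, which $B_H^*$ lacks; horizontal covariance plus the ergodic theorem gives you $\lim_n B_H^*(0,ne_1)/n = \mathbb{E}B_H^*(0,e_1)$ and nothing about $e_2$. Proposition~\ref{prop: equality} lets you identify $B_H^*$ with $B^*$ at a positive-density set of horizontal translates of $0$ (on both events), and this is exactly enough to pin down the $e_1$-slope of $B_H^*$ --- but it does not manufacture a uniform $o(\|x\|_1)$ bound in all directions in the half-plane. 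The second gap is in step two: the bound $B_H^*(0,x)\le T_H(0,x)$ gives $f_H^*\le g_H$ for the \emph{half-plane} time constant $g_H$, which is a priori $\geq g$ and not obviously equal. The supporting-line conclusion ``$f_H^*(r)\le 1$ for all $r\in\mathcal B$'' therefore does not follow as you've written it.

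The paper sidesteps both problems: it identifies the line for the full-plane functional $\rho^*$ (where $\mathbb{E}B^*(0,x)=\rho^*\cdot x$ is genuinely linear, the shape theorem holds by full translation covariance, and $B^*(0,x)\le T(0,x)$ directly gives $\rho^*\cdot r\le g(r)$), deduces $\rho^L=\rho^R=\rho$ from differentiability, and only then transfers to the half-plane --- and only along the single direction $e_1$, which is all that is needed for $\mathbb{E}\Delta_H(0,e_1)$. The transfer uses exactly the Proposition~\ref{prop: equality} mechanism you identified, but applied where it actually bites: on the positive-density set $\{i_k\}$ where $\Gamma_0^*=\Gamma_{0,H}^*$ and $\Gamma_{i_ke_1}^*=\Gamma_{i_ke_1,H}^*$, one has $B_H^*(0,i_ke_1)=B^*(0,i_ke_1)$ because both geodesics lie in $V_H$ so the passage times to their coalescence point agree under $T$ and $T_H$; combined with the one-dimensional ergodic theorem this forces $\lim_n B_H^*(0,ne_1)/n=\rho\cdot e_1$. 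You should restructure your proof to do the supporting-line identification for $\rho^L,\rho^R$ in the full plane, and use Proposition~\ref{prop: equality} only for the one-dimensional transfer.
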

\begin{proof}
We start by showing a similar statement for the full-plane Busemann functions. We will show that 
\begin{equation}\label{eq: to_show_ast}
\mathbb{E}B^L (0,e_1) = \mathbb{E}B^R(0,e_1).
\end{equation}

The function $x \mapsto \mathbb{E}B^\ast(0,x)$ is linear in $x$, so there is a vector $\rho^\ast \in \mathbb{R}^2$ such that
\[
\mathbb{E}B^\ast(0,x) = \rho^\ast \cdot x \text{ for } x \in \mathbb{Z}^2.
\]
In fact, under our assumptions, one can replicate the proof of the shape theorem for Busemann functions \cite[Theorem~4.3]{DH} exactly to show that for each $\epsilon>0$,
\begin{equation}\label{eq: busemann_shape_thm}
\mathbb{P}\left( |B^\ast(0,x) - \rho^\ast \cdot x| > \epsilon\|x\|_1 \text{ for infinitely many }x \in \mathbb{Z}^2 \right)=0.
\end{equation}
Label the last intersections of $\Gamma_0^\ast$ with $L_0, L_1, L_2, \ldots$ as $x_0, x_1, x_2, \ldots$. Then choose a subsequence $(n_k)$ such that $x_{n_k}/g(x_{n_k})$ converges to some vector $z \in \partial \mathcal{B}$. By the shape theorem,
\[
B^\ast(0,x_{n_k})/\|x_{n_k}\|_1 = T(0,x_{n_k})/\|x_{n_k}\|_1 \to g(z) = 1.
\]
On the other hand, by \eqref{eq: busemann_shape_thm}, this must converge to $\rho^\ast \cdot z$. Therefore
\[
\rho^\ast \cdot z = 1.
\]
To complement this equation, one has for all $r \in \mathcal{B}$,
\[
\rho^\ast \cdot r = \lim_n B^\ast(0,[nr])/n \leq \lim_n T(0,[nr])/n = g(r) \leq 1,
\]
where $[nr]$ is the integer point such that $nr \in [nr] + [0,1)^2$. 

These two statements imply that the line
\[
L^* = \{r \in \mathbb{R}^2 : r \cdot \rho^\ast = 1\}
\]
is a supporting line for the limit shape at $z$; that is, each point of the limit shape lies on one side of the line, and the line touches the limit shape at $z$. Since the argument of $z$ lies in $S$, our assumed differentiability conditions imply that $L^*$ is equal to the tangent line to the limit shape at $v_\theta$. In particular, $\rho^L=\rho^R$, meaning that \eqref{eq: to_show_ast} holds.

To derive the same statements for the half-plane Busemann functions, note that by the ergodic theorem, $\lim_n B_H^*(0,ne_1)/n$ exists and is constant almost surely, so it suffices to show that $\liminf_n |B_H^\ast (0,ne_1) - \rho^\ast \cdot ne_1|/n = 0$ on the positive probability event $E$ that $\Gamma_0^\ast = \Gamma_{0,H}^\ast$. (Here we are using Proposition~\ref{prop: equality}.) By the ergodic theorem, we can find an infinite increasing sequence $(i_k : k \geq 1)$ of positive integers such that $\gimel^{i_k}E$ occurs, where $\gimel$ is translation by $e_1$. Then $B_H^\ast (0,i_ke_1) = B^\ast (0,i_ke_1)$ and again by the ergodic theorem,
\begin{align*}
\liminf_n \frac{|B_H^\ast(0,ne_1) - \rho^*\cdot ne_1|}{n} &\leq \liminf_k \frac{|B_H^\ast(0,i_k e_1) - \rho^* \cdot i_ke_1|}{i_k} \\
&= \liminf_k \frac{|B^\ast(0,i_ke_1) - \rho^* \cdot i_ke_1|}{i_k} = 0.
\end{align*}
\end{proof}

\section{Proofs of main theorems}

\begin{proof}[Proof of Theorem~\ref{thm: limits}]
We first show that with probability one, $\Gamma_0^L = \Gamma_0^R$ for $\theta \in [\pi/4,\pi/2]$. For $\ast = L$ and $R$, the geodesic $\Gamma_0^\ast$ has a last intersection $z^\ast$ with $L_0$. By Proposition~\ref{prop: equality}, $\Gamma_{z^\ast}^\ast = \Gamma_{z^\ast,H}^\ast$. But $\Gamma_{z^L,H}^L = \Gamma_{z^L,H}^R$ coalesces with $\Gamma_{z^R,H}^R = \Gamma_{z^R}^R$, so $\Gamma_0^L$ and $\Gamma_0^R$ coalesce, and this implies they are equal.

We now prove that there is a unique geodesic directed in $S$ starting from $0$. Due to Corollary~\ref{cor: directed_sector}, this will prove item 1. Define $\Gamma_0 = \Gamma_0^L = \Gamma_0^R$. Assume $\Gamma$ is any geodesic directed in $S$ starting from $0$ which is not equal to $\Gamma_0$. Since $\Gamma_0$ is maximal and minimal under the ordering $\prec$, we find that $\Gamma_0 \prec \Gamma$ and $\Gamma \prec \Gamma_0$, so $\Gamma = \Gamma_0$.

Item 2 holds because $\Gamma_x^L$ coalesces with $\Gamma_y^L$. For item 3, build the directed geodesic graph $\mathbb{G} = \mathbb{G}(S)$ in sector $S$ as follows. The vertices of $\mathbb{G}$ are the vertices of $\mathbb{Z}^2$. A directed edge $\langle x,y \rangle$ is in $\mathbb{G}$ if $\{x,y\}$ is in $\Gamma_x$. Then $\mathbb{G}$ satisfies the conditions of \cite[Remark~6.10]{DH}, so for each $x \in \mathbb{Z}^2$, one has $\#C_x < \infty$ with probability one, where
\[
C_x = \{y \in \mathbb{Z}^2 : x \in \Gamma_y\}.
\]
Now, if there is a bigeodesic with an end directed in $S$ with positive probability, then there must exist a vertex $x$ such that $x$ is in such a bigeodesic with positive probability. Then there are infinitely many vertices $y$ such that one end of this bigeodesic starting from $y$ contains $x$ and is directed in $S$. However, this end is an infinite geodesic directed in $S$, so it must equal $\Gamma_y$, and therefore $\#C_x = \infty$. This event has probability zero.
\end{proof}

\begin{proof}[Proof of Theorem~\ref{thm: busemann}]
Let $(x_n)$ be a sequence of points in $\mathbb{Z}^2$ which is directed in $S$. For $x,y \in \mathbb{Z}^2$, the sequences of finite geodesics $(\Gamma(x,x_n))$ and $(\Gamma(y,x_n))$ converge to $\Gamma_x$ and $\Gamma_y$, which coalesce at a point $z$, so we can find $N$ such that for $n \geq N$, both $\Gamma(x,x_n)$ and $\Gamma(y,x_n)$ contain $z$. Then for $n \geq N$, $T(x,x_n) - T(y,x_n) = T(x,z) - T(y,z)$, so the limit in $n$ exists.

To show the second statement, we can use exactly the same arguments as in the previous section (paragraph of \eqref{eq: busemann_shape_thm}). To recall, we note that $x \mapsto \mathbb{E}B(0,x)$ is linear and so there is a $\rho \in \mathbb{R}^2$ such that $\mathbb{E}B(0,x) = \rho \cdot x$ for all $x \in \mathbb{Z}^2$. Now a shape theorem-type argument gives for each $\epsilon>0$
\[
\mathbb{P}(|B(0,x) - \rho \cdot x| > \epsilon \|x\|_1 \text{ for infinitely many }x \in \mathbb{Z}^2) = 0.
\]
Again putting $z_1, z_2, \ldots$ as the first intersections of $\Gamma_0^L$ with $L_1, L_2, \ldots$, choose a subsequence $(z_{n_k})$ so that $z_{n_k}/g(z_{n_k})$ converges to a vector $z$ which has direction in $S$. Then as before, $\rho \cdot z = 1$ and for all $r \in \mathcal{B}$, one has $\rho \cdot r \leq 1$. Under our differentiability assumption, this means that $\{r : \rho \cdot r = 1\}$ is the supporting line for the limit shape at $v_\theta$.
\end{proof}

\bigskip
\noindent
{\bf Acknowledgements.} We thank D. Ahlberg and two anonymous referees for many comments on the manuscript.

\end{document}